\documentclass[11pt,a4paper]{amsart}

\usepackage{amsmath}
\usepackage{amssymb}
\usepackage{amsthm}
\usepackage[notref,notcite,final]{showkeys}
\usepackage{xcolor}
\usepackage[margin=1.2in]{geometry}
\usepackage{mathtools}
\usepackage{graphicx}
\usepackage{enumitem}
\usepackage[ruled]{algorithm2e}
\usepackage{soul}
\usepackage{subcaption}
\usepackage{hyperref}
\usepackage{nicematrix}
\usepackage{multirow}
\usepackage{mathrsfs}
\usepackage{cleveref}
\graphicspath{{figures/}}
\allowdisplaybreaks

\newcommand{\mf}[1]{\mathfrak{#1}}

\DeclareMathOperator*{\dif}{d\!}

\newtheorem{thm}{Theorem}[section]

\newtheorem{lemma}[thm]{Lemma}
\newtheorem{prop}[thm]{Proposition}

\newtheorem{remark}[thm]{Remark}

\author[Yang]{Yang Yang}
\address{Department of Computational Mathematics Science and Engineering, Michigan State University, East Lansing, MI 48824, USA}
\curraddr{}
\email{yangy5@msu.edu}

\title[A Formula for Time-to-Frequency Wave Boundary Data Conversion]{A Formula for Time-to-Frequency Wave Boundary Data Conversion by the Boundary Control Method.
}

\begin{document}

\begin{abstract}
Given the wave equation on a compact Riemannian manifold with boundary, we derive an explicit reconstruction procedure to represent the frequency-domain Neumann-to-Dirichlet map in terms of the time-domain Neumann-to-Dirichlet map at any non-eigenfrequency. If the wave equation is exactly controllable, we derive an explicit formula to compute the former from the latter. The derivation is based on the boundary control method and requires only knowledge on the boundary of the manifold. The formula is stable when the level of regularization is fixed. The numerical feasibility is validated using one-dimensional examples in both Euclidean and non-Euclidean geometries.
\end{abstract}

\maketitle

\section{Introduction}
Inverse problems for the wave equation arise in numerous tomographic imaging applications. They may be posed either in the frequency domain or in the time domain. For the former, tools from elliptic inverse theory are natural; while for the latter, hyperbolic and control-theoretic techniques are often used. 
While classical transforms (e.g, Laplace/Fourier transform) formally connect time-domain and frequency-domain data, these routes typically require measurements for all time or analytic continuation, and thus do not directly yield finite-time, numerically stable representations.
In this paper, we derive an explicit formula that converts time-domain boundary data into frequency-domain boundary data without using the temporal Fourier or Laplace transform. This numerically feasible formula enables multi-frequency elliptic methods to be applied directly to time-domain data.

Throughout the paper, $(M,g)$ denotes an $n$-dimensional smooth compact Riemannian manifold with boundary $\partial M$. Let $\mf{P}$ be the following second-order elliptic operator on $M$:
$$
\mf{P}u := - c^2 \Delta_g u + qu
$$
where $0< c \in C^\infty(M)$ is a real-valued smooth positive \textit{wave speed}, $q\in C^\infty(M)$ is a real-valued smooth \textit{potential}, and $\Delta_g$ is the Laplace-Beltrami operator with coordinate representation
$$
\Delta_g u = \frac{1}{\sqrt{\det g}} \sum^n_{j,k=1} \partial_j \left( \sqrt{\det g} g^{jk} \partial_k u \right).
$$
We proceed to define the boundary data in the frequency domain and time domain.

\subsection{Inverse Boundary Value Problems in the Frequency Domain}
If $\lambda\in\mathbb{C}$ is not a Neumann eigenvalue of $\mf{P}$ and $\mf{f}\in H^{-\frac{1}{2}}(M)$, the elliptic boundary value problem
$$
(\mf{P} - \lambda) \mf{u}_\lambda = 0 \; \text{ on } M, \qquad \partial_\nu \mf{u}_\lambda|_{\partial M} = \mf{f}
$$
admits a unique weak solution $\mf{u}_\lambda = \mf{u}^{\mf{f}}_\lambda\in H^1(M)$. Here $\partial_\nu$ is the boundary normal derivative with respect to the metric $g$ which in local coordinates is $\partial_\nu u = \sum^n_{j,k=1} g^{jk} \nu_j \partial_k u$.
The \textit{elliptic Neumann-to-Dirichlet map (ND map) at frequency $\lambda$} is defined as 
\begin{equation} \label{eq:ellipticNDmap}
\mathfrak{L}_\lambda: \mf{f} \mapsto \mf{u}_\lambda|_{\partial M}, \qquad \lambda\in\mathbb{C} \text{ not a Neumann eigenvalue.}
\end{equation}
The linear map $\mathfrak{L}_\lambda: H^{-\frac{1}{2}}(M) \to H^{\frac{1}{2}}(M)$ is bounded by the standard elliptic regularity theory. It encodes the correspondence between the elliptic Neumann and Dirichlet data at frequency $\lambda$. We view $\mf{L}_\lambda$ as the frequency-domain wave boundary data.

Recovering one or more of the coefficients $(c,g,q)$ from knowledge of the ND map $\mf{L}_\lambda$ at one or multiple non-eigenfrequencies $\lambda$ constitutes a fundamental class of inverse problems, commonly referred to as \textit{inverse boundary value problems in the frequency domain}. These problems aim to determine interior analytical/geometric/topological properties of a manifold from boundary measurements and play a crucial role in inverse scattering and spectral analysis. A prominent special case is the Calder\'on's problem, which corresponds to replacing the ND map by the Dirichlet-to-Neumann (DN) map and considering the zero-frequency regime $\lambda=0$ (in which case Neumann eigenvalues are replaced by Dirichlet eigenvalues). The Calder\'on's problem has been extensively studied in the literature. We refer readers to the survey paper~\cite{uhlmann2009electrical} for detailed discussion and more comprehensive results.

\medskip
\subsection{Inverse Boundary Value Problems in the Time Domain}
The initial-boundary value problem associated with the wave dynamics of the elliptic operator $\mf{P}$ is
\begin{equation} \label{eq:ibvp}
\left\{
\begin{array}{rcl}
\partial^2_t u(t,x) + \mf{P}u & = & 0, \quad\quad\quad \text{ in } (0,2T) \times M \\
 \partial_\nu u  & = & f, \quad\quad\quad \text{ on } (0,2T) \times \partial M \\ 
 u(0,x) = \partial_t u(0,x) & = & 0 \quad\quad\quad\quad x \in M.
\end{array}
\right.
\end{equation}
where $T>0$ is a constant. Given $f\in C^\infty_c((0,2T)\times\partial\Omega)$, the well-posedness theory for the wave equation ensures a unique solution $u(t,x)=u^f(t,x)$. We define the \textit{hyperbolic Neumann-to-Dirichlet (ND) map} as:
\begin{equation} \label{eq:waveNDmap}
\Lambda f := u^f|_{(0,2T)\times \partial M}.
\end{equation}
The linear map $\Lambda$ extends to a bounded linear operator on $L^2((0,2T)\times\partial M)$, see Lemma~\ref{thm:operators} below. It encodes the correspondence between the hyperbolic Neumann and Dirichlet data. We view $\Lambda$ as the time-domain wave boundary data.

Recovering one or more of the coefficients $(c,g,q)$ from knowledge of the ND map $\Lambda$ constitutes another fundamental class of inverse problems, known as \textit{inverse boundary value problems in the time domain}.
These problems play a crucial role in scattering theory and wave-based imaging techniques. Note that if the observation time is finite, that is $T<\infty$, the finite speed of propagation for the wave equation gives a lower bound for $T$ in order for inverse boundary value problems in the time domain to have a unique solution. To see this, let us define
$$
T^* := \max_{x\in M} d(x,\partial M).
$$
where $d$ is the Riemannian distance function of the Riemannian manifold $(M,c^{-2} g)$. If $T<d(x_0,\partial M)$ for some $x_0\in M$, then the boundary measurement $\Lambda$ does not contain any information near $x_0$. Conversely, the inverse boundary value problem in the time domain is known to be uniquely solvable up to natural gauge transformations if $T>T^*$ using the boundary control method originated from~\cite{belishev1987approach} or complex geometric optics solutions originated from~\cite{sylvester1987global}. We refer readers to the survey papers~\cite{belishev2017boundary,belishev2007recent} and the monograph~\cite{katchalov2001inverse} for detailed discussion.

\subsection{Equivalence of Frequency-Domain and Time-Domain Wave Boundary Data}
The frequency-domain wave boundary data~\eqref{eq:ellipticNDmap} and the time-domain wave boundary data~\eqref{eq:waveNDmap} are well known to be equivalent. For instance, Katchalov et al.~\cite{katchalov2004equivalence} established a general equivalence between several types of boundary and spectral data associated with evolution equations, including the wave, heat, and Schr\"odinger equations. As one consequence of this theory, the frequency-domain Dirichlet-to-Neumann (DN) map and the time-domain DN map uniquely determine each other. Although this approach is mathematically sound, it relies on analytic continuation of holomorphic functions arising from Laplace-transform–based representations, a procedure that is severely ill-posed and therefore challenging for numerical implementation. Moreover, the representation requires boundary measurements over infinite observation time $T=\infty$, which must be truncated in any numerical realization, introducing inevitable truncation errors. To the best of our knowledge, the representation derived in~\cite{katchalov2004equivalence} has not been implemented numerically in the literature.

In this paper, we take a different approach to derive an explicit procedure that reconstructs the frequency-domain ND map from the time-domain ND map involving no analytic continuation and only finite observation time $T<\infty$. If the wave equation is exactly controllable, the procedure can be turned into a formula for numerical computation. Our approach is based on the boundary control method (BC method), pioneered by Belishev~\cite{belishev1987approach}. The BC method provides a systematic framework for analyzing wave propagation from boundary measurements by exploiting controllability, observability, and time-reversal properties of the wave equation. Its central idea is that boundary excitations generate waves whose interior dynamics encode geometric and analytic information about the underlying domain and coefficients, and that this information can be accessed through suitably designed boundary controls and measurements. Combined with Tataru’s unique continuation result~\cite{tataru1995unique, Tataru99unique}, the BC method has proven to be a powerful tool for establishing identifiability of coefficients in evolution equations. We refer the reader to the survey and monograph~\cite{belishev2017boundary,belishev2007recent,katchalov2001inverse} for various applications of the BC method.

An advantage of the BC method is its constructive nature: it often leads to operator identities linking measured boundary data directly or indirectly to quantities of interest. For this reason, the BC method often leads to reconstruction procedure. For example, numerical implementations based on the BC method have been developed to reconstruct the wave speed $c$/metric $g$~\cite{belishev1999dynamical, belishev2016numerical, de2018recovery, korpela2018discrete, oksanen2024linearized,pestov2012numerical, pestov2010numerical, yang2021stable} and the potential $q$~\cite{oksanen2022linearized} for acoustic wave equations. Nevertheless, reconstructions obtained via the BC method are typically unstable, reflecting the ill-posedness inherent in inverse problems for wave equations. As a consequence, practical implementations require suitable regularization, and stability is usually obtained only in a conditional or regularized sense. 
The present work follows this paradigm: the BC method is used to derive an explicit representation formula, while stability is ensured by fixing the level of regularization, leading to a formulation that is both mathematically transparent and numerically feasible.

\subsection{The Contribution:} The paper concerns numerically-feasible reconstruction of the frequency-domain ND map $\mathfrak{L}_\lambda$ at any non-eigenfrequency $\lambda$ from the time-domain ND map $\Lambda$. The major contribution includes:
\begin{itemize}
    \item A reconstruction procedure with finite observation time $T<\infty$. Laplace-transform–based representations are naturally formulated using boundary measurements over infinite time $T=\infty$. However, results in wave inverse problems show that finite observation time -- often dictated by the geometry of the manifold $(M,c^{-2} g)$ -- is sufficient to uniquely determine the frequency-domain ND map. In contrast to infinite observation time, we derive a reconstruction procedure that requires only finite observation time $T > T^*$, see Theorem~\ref{thm:ReconProcedure}. The finite observation time is more compatible with both theoretical insights from wave propagation and numerical implementation constraints. A striking feature of the procedure is that the reduction from time-domain to frequency-domain data is carried out entirely on the boundary. No interior information about $c,g,q$ is required. The formula thus applies to general compact Riemannian manifolds with boundary.

    \item A reconstruction formula with finite observation time $T<\infty$ for exactly controllable wave equations. Theoretical reconstruction formulas based on the Laplace transform rely on analytic continuation for certain non-eigenfrequencies, which is known to be ill-posed and numerically challenging (see Section~\ref{sec:idea} for detailed discussion). In contrast, we derive an explicit reconstruction formula in Theorem~\ref{thm:ReconFormula} that applies uniformly to all Neumann non-eigenfrequencies, provided the wave equation is exactly controllable. The formula utilizes an explicit regularization procedure instead of analytic continuation, allowing direct reconstruction. 

    \item Numerical feasibility and stability. The reconstruction formula involves only bounded linear operators and inverse of regularized operators, making it well suited for numerical implementation. Furthermore, we prove a local Lipschitz-type stability estimate in Theorem~\ref{thm:stability} when the regularization parameter is fixed, providing theoretical justification for the numerical stability. The reconstruction algorithm is further validated through one-dimensional numerical examples in both Euclidean and non-Euclidean geometries to confirm its numerical feasiblity and stability.
\end{itemize}

\medskip
The paper is organized as follows. Section~\ref{sec:derivation} introduces the notation and establishes preliminary results needed for the subsequent analysis. Section~\ref{sec:idea} reviews the Laplace transform–based approach and presents the main ideas underlying the boundary control method adopted in this work. Section~\ref{sec:density} presents two density results under the assumption that the wave equation is approximately controllable and exactly controllable respectively.
In Section~\ref{sec:reconprocedure}, we derive the reconstruction procedure and formula. Section~\ref{sec:stability} proves a stability estimate for the reconstruction formula with a fixed regularization parameter. Finally, Section~\ref{sec:numerics} implements the reconstruction formula and evaluates its performance through numerical experiments in both Euclidean and non-Euclidean geometries.

\bigskip
\section{Notations and Preliminaries} \label{sec:derivation}

Given a function $u(t,x)$, we write $u(t)=u(t,\cdot)$ for the spatial part as a function of $x$. We denote the Riemannian volume form on $M$ by $dx$, and the induced measure on $\partial M$ by $d\mu_{\partial M}$.
Introduce the following notations for various $L^2$-inner products:
\begin{align*}
(U,V)_{M} := & \int_M U(x) V(x) \,dx, & U,V\in L^2(M) \\
(U,V)_{c^{-2},M} := & \int_M U(x) V(x) \, c^{-2} dx = (c^{-1}U, c^{-1}V)_M, & U,V\in L^2(M) \\
(U,V)_{\partial M} := & \int_{\partial M} U(x) V(x) \,d \mu_{\partial M}(x), & U,V\in L^2(\partial M) \\
(U,V)_{(0,T)\times\partial M} := & \int^T_0 \int_{\partial M} U(t,x) V(t,x) \,d \mu_{\partial M}(x) \,dt, & U,V\in L^2((0,T)\times\partial M)
\end{align*}
The norms induced by these inner products are denoted by $\|\cdot\|_M$, $\|\cdot\|_{c^{-2},M} = \|c^{-1}\cdot \|_M$, $\|\cdot\|_{\partial M}$ and $\|\cdot\|_{(0,T)\times\partial M}$, respectively.
Note that $c^{-1}\in C^\infty(M)$ is a positive smooth function bounded from above and below on $M$, hence the norms $\|\cdot\|_M$, $\|\cdot\|_{c^{-2},M}$ are equivalent.
For a positive integer $s$, define
\begin{align*}
    H^s_{00}((0,T)\times\partial M) := & \{f: \; f, \partial_t f, \dots, \partial^{s}_t f \in L^2((0,T)\times\partial M), \\
    & \; f|_{t=0} = \partial_t f|_{t=0} = \dots = \partial^{s-1}_t f|_{t=0} = 0 \}
\end{align*}
equipped with the norm
$$
\|f\|^2_{H^s_{00}((0,T)\times\partial M)} := \sum^s_{j=1} \| \partial^j_t f \|^2_{(0,T)\times\partial M}.
$$

Let us introduce a few linear operators to be used later. We write 
$$
P_T: L^2((0,2T)\times\partial M) \rightarrow L^2((0,T)\times\partial M)
$$
for the orthogonal projection via restriction. Its adjoint operator 
$$
P^\ast_T: L^2((0,T)\times\partial M) \rightarrow L^2((0,2T)\times\partial M)
$$
is the extension by zero from $(0,T)$ to $(0,2T)$. For any $f\in L^2((0,T)\times\partial M)$, define the \textit{control operator}
$$
W: L^2((0,T)\times\partial M) \rightarrow L^2(M), \qquad f\mapsto u^{P^*_T f}(T) 
$$
where $u^{P^*_T f}$ is the solution of the initial boundary value problem~\eqref{eq:ibvp} with the Neumann source $P^*_T f$. 
The trace operator $\mathcal{T}$ on the boundary is defined as:
$$
\mathcal{T}: H^1_{00}((0,T)\times\partial M) \rightarrow L^2(\partial M), \quad f\mapsto f(T).
$$
Finally, we introduce the operator $S$ by requiring
$$
S := \mathcal{T} \partial^{-2}_t
$$
where $\partial^{-2}_t = \partial^{-1}_t \circ \partial^{-1}_t$ with $\partial^{-1}_t f(t,x) := \int^t_0 f(\tau,x) \,d\tau$. 

\begin{remark} \label{thm:extensionremark}
The definition of $W$ is independent of how $f$ is extended from $(0,T)$ to $(0,2T)$. Indeed, if $Q^*_T: L^2((0,T)\times\partial M) \rightarrow L^2((0,2T)\times\partial M)$ is another extension operator such that $P_T Q^*_T f = f$ on $L^2((0,T)\times\partial\Omega)$, then 
$$
u^{Q^*_T f} = u^{P^*_T f} \quad \text{ on } (0,T)\times M
$$ 
as they solve the same initial boundary value problem on $(0,T)\times M$ with identical Neumann data $f$. Henceforth, we will simply write 
$Wf = u^f(T)$ for $f\in L^2((0,T)\times\partial M)$ for the brevity of notation, but with the understanding that $f$ is extended in some way from $(0,T)$ to $(0,2T)$.
From time to time, we need to compute the derivative $\partial^2_t u^{f} = u^{\partial^2_t f}$ for $f$ with sufficient regularity. This should be understood as 
$$
\partial^2_t u^{f} = \partial^2_t u^{Q^*_T f} = u^{\partial^2_t Q^*_T f}
$$
for some extension $Q^*_T$ that preserves the regularity of $f$.
\end{remark}

\medskip
The operators $P_T,P^*_T$ are clearly continuous on the given spaces. We show that all the other operators defined above are continuous as well when considered on suitable Hilbert spaces.

\begin{lemma} \label{thm:operators}
The following linear operators are continuous between the given Hilbert spaces:
    \begin{enumerate}
        \item[(i)] 
        $\partial^{-1}_t: \; L^2((0,T)\times\partial M) \rightarrow L^2((0,T)\times\partial M)$, \\
        $\partial^{-1}_t: \; L^2((0,T)\times\partial M) \rightarrow H^1_{00}((0,T)\times \partial M)$.
        \item[(ii)] $W: \; L^2((0,T)\times\partial M) \rightarrow L^2(M)$.
        \item[(iii)] $\Lambda: \; L^2((0,2T)\times\partial M) \rightarrow \; L^2((0,2T)\times\partial M)$.
        \item[(iv)] $\mathcal{T}: H^1_{00}((0,T)\times\partial M) \rightarrow L^2(\partial M)$.
        \item[(v)] $S: L^2((0,T)\times\partial M) \rightarrow L^2(\partial M)$. 
    \end{enumerate}
\end{lemma}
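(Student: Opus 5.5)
We take the five items in turn. Items (i), (iv), (v) are elementary one-dimensional estimates in the time variable. Items (ii) and (iii) rest on known sharp regularity for the Neumann wave problem.

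For (i), write $\partial^{-1}_t f(t,x)=\int_0^t f(\tau,x)\,d\tau$ and apply Cauchy--Schwarz in $\tau$:
$$
|\partial^{-1}_t f(t,x)|^2\le t\int_0^T|f(\tau,x)|^2\,d\tau .
$$
Integrating over $(0,T)\times\partial M$ gives $\|\partial^{-1}_t f\|_{(0,T)\times\partial M}\le \tfrac{T}{\sqrt2}\|f\|_{(0,T)\times\partial M}$. For the second mapping, observe that $\partial_t\partial^{-1}_t f=f$ in $L^2$ and that $\partial^{-1}_t f|_{t=0}=0$, which makes sense since $\partial^{-1}_t f$ is absolutely continuous in $t$ for a.e.\ $x$. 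Hence $\partial^{-1}_t f\in H^1_{00}$ with $\|\partial^{-1}_t f\|_{H^1_{00}}=\|f\|$.

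For (iv), take $f\in H^1_{00}$, so $f(T,x)=\int_0^T\partial_t f(\tau,x)\,d\tau$. Cauchy--Schwarz gives $\|f(T)\|^2_{\partial M}\le T\|\partial_t f\|^2_{(0,T)\times\partial M}$. This is exactly $T$ times the $H^1_{00}$ norm squared. The estimate is first proved for smooth $f$ vanishing at $t=0$ and then extended by density.

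For (v), write $S=\mathcal T\circ\partial^{-1}_t\circ\partial^{-1}_t$. By (i), the first $\partial^{-1}_t$ maps $L^2\to L^2$ and the second maps $L^2\to H^1_{00}$. Then (iv) applies, so $S$ is a composition of bounded maps.

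For (ii) and (iii) we need energy-level regularity for Neumann data that are only in $L^2$. A naive energy estimate gives $u\in C([0,2T];H^1(M))$ only for $f\in H^1$-type data. With $L^2$ data the solution is merely $C([0,2T];H^{\alpha}(M))$ with trace in $H^{\beta}$ for some $\alpha,\beta>0$. This is the sharp Lasiecka--Triggiani regularity for the Neumann problem, $\alpha=3/5-\epsilon$ (or $3/4$ in special geometries), and $\beta=2\alpha-1/2$ on the boundary. We would cite it rather than reprove it: the map $f\mapsto u^f$ is continuous from $L^2((0,2T)\times\partial M)$ into $C([0,2T];H^{\alpha}(M))$, and $f\mapsto u^f|_{(0,2T)\times\partial M}$ is continuous into $H^{\beta}((0,2T)\times\partial M)\hookrightarrow L^2$. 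Since $\mf P$ is a smooth-coefficient second-order elliptic operator, the transversal symbol is unchanged and $c,q$ only add lower-order terms, so the theorem applies verbatim.

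From this, (ii) follows because $W=\mathrm{ev}_T\circ(f\mapsto u^{P^*_Tf})$ and $P^*_T$ is an isometry; we may also invoke Remark~\ref{thm:extensionremark}. Then (iii) is the trace statement, extended from $C^\infty_c$ by density. The main obstacle is precisely this sharp trace/interior regularity for $L^2$ Neumann data: it is a genuinely nontrivial microlocal result, and we would quote it.

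A more elementary fallback for (ii) is duality. Use the identity $(u^f(T),\phi)_{c^{-2},M}=(f,v)_{(0,T)\times\partial M}$, where $v$ solves the backward homogeneous Neumann problem with Cauchy data $(0,\phi)$. This reduces boundedness of $W$ to $L^2(M)\to L^2((0,T)\times\partial M)$ boundedness of the trace of $v$ for $H^{-1}$-level data, which is again the same hidden-regularity issue.
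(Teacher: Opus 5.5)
Your proposal is correct and follows the paper's proof essentially line by line. Items (i) and (iv) use the same Cauchy--Schwarz bounds, (v) is the same composition argument, and (ii)--(iii) rest, as in the paper, on the Lasiecka--Triggiani estimate $L^2((0,2T)\times\partial M)\to C([0,2T];H^{3/5-\epsilon}(M))$, followed by evaluation at $t=T$ or by the boundary trace.

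There is one small misquote. In Lasiecka--Triggiani the boundary trace lies in $H^{2\alpha-1}$, not $H^{2\alpha-1/2}$. Also, $c$ does enter the principal part, although their theorem covers smooth variable coefficients. Neither point matters, since any nonnegative trace regularity gives $L^2$ boundedness of $\Lambda$, and the paper simply composes with the trace of $H^{3/5-\epsilon}(M)$.
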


\begin{proof}
    (i) For any $f\in L^2((0,T)\times\partial M)$, the Cauchy-Schwarz inequality implies
    $$
    \left| \partial^{-1}_t f(t,z) \right|^2 = \left| \int^t_0 f(s,z) \,ds \right|^2 \leq t \int^t_0 |f(s,z)|^2 \,ds, \qquad t\in (0,T), \quad z\in\partial M.
    $$
    Therefore,
    \begin{align*}
        \| \partial^{-1}_t f \|^2_{(0,T)\times\partial M } \leq \int_{\partial M} \int^T_0 \left[ t \int^T_0 |f(s,z)|^2 \,ds \right] \, dt \, d\mu_{\partial M}(z) = \frac{T^2}{2} \| f \|^2_{(0,T)\times\partial M }.
    \end{align*}
This shows the continuity of $\partial^{-1}_t: L^2((0,T)\times\partial M) \rightarrow L^2((0,T)\times\partial M)$. Moreover, 
$$
\| \partial_t (\partial^{-1}_t f) \|^2_{(0,T)\times\partial M } = \| f \|^2_{(0,T)\times\partial M }
$$
and $\partial^{-1}_t f|_{t=0} = 0$. We thus conclude the continuity of $\partial^{-1}_t: L^2((0,T)\times\partial M) \rightarrow H^1_{00}((0,T) \times\partial M)$.

(ii) For any Neumann data $h\in L^2((0,2T)\times\partial M)$, the initial boundary value problem~\eqref{eq:ibvp} admits a unique solution $u^h$, and the solution operator 
\begin{equation} \label{eq:SolutionOperator}
L^2((0,2T)\times\partial M) \rightarrow C([0,2T]; H^{\frac{3}{5}-\epsilon}(M)), \qquad h\mapsto u^h
\end{equation}
is continuous for any $\epsilon>0$~\cite[Theorem A]{lasiecka1991regularity}. Let $\epsilon<\frac{3}{5}$, then $Wf:=u^{P^*_T f}(T)$ maps $f\in L^2((0,T)\times\partial M)$ continuously into $H^{\frac{3}{5}-\epsilon}(M)$, which is continuously embedded in $L^2(M)$.

(iii) The hyperbolic ND map $\Lambda$ is the composition of the solution operator~\eqref{eq:SolutionOperator} followed by the trace operator onto $(0,2T)\times\partial M$. As a result, $\Lambda$ maps $L^2((0,2T)\times\partial M)$ continuously into $H^{\frac{3}{5}-\frac{1}{2}-\epsilon}((0,2T)\times\partial M)$, which is continuously embedded in $L^2((0,2T)\times\partial M)$ for small $\epsilon>0$.

(iv) For any $f\in H^1_{00}((0,T)\times\partial M)$, we have
$$
\left| f(T,z) \right|^2 = \left| \int^T_0 \partial_t f(s,z) \,ds \right|^2 \leq T \int^T_0 |\partial_t f(s,z)|^2 \,ds, \qquad z\in\partial M
$$
by the Cauchy-Schwarz inequality. Hence
$$
\|\mathcal{T}f\|^2_{\partial M} = \int_{\partial M} |f(T,z)|^2 \,d\mu_{\partial M}(z) \leq T \int_{\partial M} \int^T_0 |\partial_t f(s,z)|^2 \,ds \,d\mu_{\partial M}(z) \leq T \|f\|^2_{H^1_{00}((0,T)\times\partial M)}.
$$
    
(v) Simply notice that $S=\mathcal{T}\circ\partial^{-1}_t\circ\partial^{-1}_t$ is the composition of continuous operators. 
\end{proof}

\begin{remark}
    The operator $W: L^2((0,T)\times\partial M) \rightarrow L^2(M), \; f\mapsto u^{P^*_T f}(T)$ is indeed compact in view of the compact Sobolev embedding $H^{\frac{3}{5}-\epsilon}(M)\hookrightarrow L^2(M)$.
\end{remark}

\medskip
Later, we will need the $L^2$-adjoint of $\partial_t^{-2}$ and $S$. We collect the results in the next lemma.

\begin{lemma} \label{thm:CalculatingAdjoint}
    \begin{enumerate}
        \item The adjoint of $\partial^{-1}_t: L^2((0,T)\times\partial M) \rightarrow L^2((0,T)\times\partial M)$ is
        $$
        Z: L^2((0,T)\times\partial M) \rightarrow L^2((0,T)\times\partial M), \qquad Zf = \int^T_t f(s) \,ds.
        $$
        As a result, the adjoint of $\partial^{-2}_t: L^2((0,T)\times\partial M) \rightarrow L^2((0,T)\times\partial M)$ is $Z^2$.
        \item The adjoint of $S: L^2((0,T)\times\partial M) \rightarrow L^2(\partial M)$ is
        $$
        S^*: L^2(\partial M) \rightarrow L^2((0,T)\times\partial M), \qquad S^* \mf{f} = (T-t)\mf{f}.
        $$
    \end{enumerate}
\end{lemma}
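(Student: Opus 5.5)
The plan is a direct computation with Fubini's theorem on the real-valued $L^2$ inner products of the paper. Since every operator involved is bounded (Lemma~\ref{thm:operators}), it is enough to verify each adjoint identity for all $f,h$ in the relevant $L^2$ spaces. Every integrand below is absolutely integrable by Cauchy--Schwarz on the bounded interval $(0,T)$, so Fubini applies without further justification.

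For part (1), fix $f,h\in L^2((0,T)\times\partial M)$. Expand the pairing as
$$
(\partial_t^{-1}f,h)_{(0,T)\times\partial M}=\int_{\partial M}\int_0^T\int_0^t f(s,z)h(t,z)\,ds\,dt\,d\mu_{\partial M}(z).
$$
Next, swap the $s$ and $t$ integrals over the triangle $\{0<s<t<T\}$. This rewrites the expression as $\int_{\partial M}\int_0^T f(s,z)\left(\int_s^T h(t,z)\,dt\right)ds\,d\mu_{\partial M}(z)=(f,Zh)_{(0,T)\times\partial M}$. Hence $(\partial_t^{-1})^*=Z$. The boundedness of $Z$ on $L^2$ follows by the same Cauchy--Schwarz estimate as in Lemma~\ref{thm:operators}(i), although it is also automatic for the adjoint of a bounded operator. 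Finally, $(\partial_t^{-2})^*=(\partial_t^{-1}\partial_t^{-1})^*=Z^2$ by the rule $(AB)^*=B^*A^*$.

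For part (2), take $f\in L^2((0,T)\times\partial M)$ and $\mf{f}\in L^2(\partial M)$. Write $Sf$ explicitly as
$$
Sf(z)=\partial_t^{-2}f(T,z)=\int_0^T\int_0^t f(s,z)\,ds\,dt=\int_0^T (T-s)f(s,z)\,ds,
$$
where the last equality again uses Fubini on the triangle. Then
$$
(Sf,\mf{f})_{\partial M}=\int_{\partial M}\int_0^T f(s,z)(T-s)\mf{f}(z)\,ds\,d\mu_{\partial M}(z)=(f,(T-t)\mf{f})_{(0,T)\times\partial M}.
$$
This identifies $S^*\mf{f}=(T-t)\mf{f}$, which clearly lies in $L^2((0,T)\times\partial M)$ with norm at most $T^{3/2}3^{-1/2}\|\mf{f}\|_{\partial M}$. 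As a consistency check, one can note that $S^*\mf{f}=Z^2$ applied to the time-independent constant extension of $\mf{f}$, matching $S^*=(\partial_t^{-2})^*\mathcal{T}^*$ formally.

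There is no real obstacle. The only point needing care is the pointwise formula for $\mathcal{T}\partial_t^{-2}f$, namely evaluating at $t=T$ a function that is only in $H^1_{00}$ in time. I would justify this by noting that $\partial_t^{-2}f$ is continuous in $t$ with values in $L^2(\partial M)$, so the evaluation is meaningful. Alternatively, one can prove the identity first for $f\in C_c^\infty$ and extend by density using the continuity of $S$ from Lemma~\ref{thm:operators}(v).
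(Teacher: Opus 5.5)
Your proof is correct and is essentially the paper's argument. Both verify the defining adjoint identities by direct computation; you swap the order of integration over the triangle $\{0<s<t<T\}$, while the paper integrates by parts in $t$. In (1) the paper writes $h=\partial_t\partial_t^{-1}h$ and uses $Zf|_{t=T}=\partial_t^{-1}h|_{t=0}=0$; in (2) it integrates by parts twice against $T-t$ with $h=\partial_t^{-2}f$. Your Fubini version is marginally cleaner, since it needs no boundary terms or absolute-continuity justification.

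One side remark is wrong, although it plays no role in your proof. $Z^2$ applied to the constant extension of $\mf{f}$ equals $\tfrac12(T-t)^2\mf{f}$, not $(T-t)\mf{f}$. The correct formal check is that $\mathcal{T}^*\mf{f}$ is $\mf{f}$ times a delta at $t=T$, so $Z\mathcal{T}^*\mf{f}$ is the constant extension. Hence $S^*\mf{f}=Z^2\mathcal{T}^*\mf{f}=Z\mf{f}=(T-t)\mf{f}$: only one $Z$ acts on the constant extension.
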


\begin{proof}
    (1) For any $f,h\in L^2((0,T)\times\partial M)$, we write $h = \partial_t \partial^{-1}_t h$ and integrate by parts in the temporal variable to get
    \begin{align*}
        (Z f, h)_{(0,T)\times\partial M} & = (Z f, \partial_t \partial^{-1}_t h)_{(0,T)\times\partial M} = (Zf,\partial^{-1}_t h)_{\partial M}|^T_0 - (\partial_t Z f, \partial^{-1}_t h)_{(0,T)\times\partial M} \\
        & = (f, \partial^{-1}_t h)_{(0,T)\times\partial M}
    \end{align*}
where we used the fact that $\partial^{-1}_t h|_{t=0} = Zf|_{t=T}=0$ and $\partial_t Zf = -f$. This shows the adjoint of $\partial^{-1}_t$ is $Z$. It follows that the adjoint of $\partial^{-2}_t = \partial^{-1}_t \circ \partial^{-1}_t$ is $Z^2$.

(2) We will verify that $S^*$ defined as above is indeed the adjoint of $S$. Take any $f\in L^2((0,T)\times\partial\Omega)$ and any $\mf{f}\in L^2(\partial\Omega)$, then
$$
(f, S^* \mf{f})_{(0,T)\times\partial M} = (f, (T-t) \mf{f})_{(0,T)\times\partial M} = \int_{\partial M} \left[ \int^T_0 f(t) (T-t) \,dt \right] \; \mf{f} d\mu_{\partial M}.
$$
Let $h:=\partial^{-2}_t f$, then $\partial^2_t h = f$ and $h|_{t=0} = \partial_t h|_{t=0} = 0$. Integrate by parts twice to get
$$
\int_{\partial M} \left[ \int^T_0 \partial^2_t h(t) (T-t) \,dt \right] \mf{f} d\mu_{\partial M} = \int_{\partial M} \left[ \int^T_0 \partial_t h(t)\,dt \right] \, \mf{f} d\mu_{\partial M} = \int_{\partial M} h(T) \mf{f} \,d\mu_{\partial M} = (Sf, \mf{f})_{\partial M}.
$$
This is the relation that characterizes the adjoint of $S$.
\end{proof}

\bigskip
\section{The Idea} \label{sec:idea}

\subsection{A Theoretically Feasible Approach with $T=\infty$}

Given a function $u(t,x)$, its (temporal) Laplace transform is defined as
\begin{equation} \label{eq:LaplaceTransform}
\hat{u}(\omega,x) := \int^\infty_0 e^{-\omega t} u(t,x) \, dt. 
\end{equation}
We denote the open right half complex plane by 
$$
\mathbb{C}_R :=\{\omega\in\mathbb{C}: \Re\omega>0\},
$$
where $\Re\omega$ denotes the real part of $\omega$.

Fix $\omega\in\mathbb{C}$. For any $\mf{f}\in C^\infty(\partial M)$, pick $f \in C^\infty_c ((0,\infty)\times\partial M)$ such that $\hat{f}(\omega) = \mf{f}$. This can be done by choosing $\varphi \in C^\infty_c(0,\infty)$ so that $\int^\infty_0 e^{-\omega t} \varphi(t) \,dt=1$, then setting $f(t,x)=\varphi(t) \mf{f}(x)$.
Use such $f$ as the Neumann data to solve~\eqref{eq:ibvp} and let the solution be $u(t,x) = u(t)$. Since the energy of $u(t)$ is bounded for large $t$, the integral~\eqref{eq:LaplaceTransform} converges for $\omega\in\mathbb{C}_R$ and is an analytic function of $\omega$. For each $\omega\in\mathbb{C}_R$, $\hat{u}$ solves
$$
(\mf{P} - \omega^2) \hat{u}(\omega) = 0 \quad \text{ in } M, \qquad \partial_\nu \hat{u}(\omega)|_{\partial M} = \hat{f}(\omega)|_{\partial M} = \mf{f}|_{\partial M}.
$$
If $\omega^2$ is not a Neumann eigen-frequency, this boundary value problem has a unique solution $\hat{u}(\omega)$ with the Dirichlet data
$$
\mf{L}_{\omega^2} \mf{f} = \hat{u}(\omega)|_{\partial M} = \widehat{\Lambda f}(\omega)|_{\partial M}.
$$
This relation gives a theoretically feasible construction of $\mf{L}_{\omega^2}$ at any non-Neumann eigenfrequency $\lambda = \omega^2$ with $\omega\in\mathbb{C}_R$. We remark that the reverse representation, that is expressing $\Lambda$ in terms of $\mf{L}_{\lambda}$ with all non-Neumann eigenfrequency $\lambda$, is also possible, see~\cite[Equation (33)]{katchalov2004equivalence}.

While this approach is mathematically sound and theoretically feasible, it has clear limitations from the numerical perspective. First, the requirement $\omega\in\mathbb{C}_R$ means that the argument above does not apply to $\lambda<0$. This limitation can be remedied in theory by taking $\omega\in\mathbb{C}_R$ to approach $\sqrt{-\lambda} i$ for $\lambda<0$. However, the problem of recovering the boundary value of $\hat{u}(\omega)$ with $\omega$ on (or arbitrarily close to) the imaginary axis from the holomorphic function known in $\mathbb{C}_R$ is an analytic continuation problem, and analytic continuation problems are known to be severely ill-posed. As a result, numerical implementation for the case $\lambda<0$ is expected to be susceptible to noise in $\hat{u}(\omega)$ with $\omega\in\mathbb{C}_R$.

Second, the argument is based on infinite observation time $T=\infty$, which is theoretically permitted but numerically infeasible. Any numerical implementation has to truncate $T$ at a finite value, causing truncation errors. On the other hand, the requirement $T=\infty$ seems redundant when contrasted with the uniqueness results for wave inverse problems: It has been shown (e.g, see~\cite{belishev2007recent}) that finite observation time -- typically determined by the geometry of $(M,c^{-2} g)$ -- is sufficient to identify the operator $\mf{P}$ up to a natural gauge, which in turn is sufficient to determine $\mf{L}_\lambda$ for non-Neumann eigenfrequency $\lambda$.
It is therefore a natural attempt to reduce the infinite observation time to a finite value.

\subsection{A Numerically Feasible Approach with $T<\infty$}

In the rest of the paper, we develop a different approach based on the BC method to reconstruct $\mf{L}_\lambda$ for non-Neumann eigenfrequency $\lambda\in\mathbb{C}$.

The idea is as follows: for a given $\mf{f}\in L^2(\partial\Omega)$, we consider the minimization problem:
$$
\min_{f\in H^2_{00}} \|u^f_{tt}(T) + \lambda u^f(T) \|^2_{c^{-2},M} + \|f(T)-\mf{f}\|^2_{\partial M}
$$
where $\|\cdot\|_{c^{-2},M}$ is the norm induced by the inner product $(\cdot,\cdot)_{c^{-2},M} = (c^{-1} \cdot,c^{-1}\cdot)_{M}$. 
Here, the first term in the objective function promotes $u(T)$ to solve the elliptic boundary value problem, while the second term attempts to match the boundary condition.
Alternatively, we can use the isomorphism $\partial^{-2}_t: L^2((0,T)\times\partial M)\rightarrow H^2_{00}(0,T)\times\partial M)$ to write the optimization in terms of the variable $\ddot{f}:=\partial^2_t f$: 
$$
\min_{\ddot{f} \in L^2} \|u^{\ddot{f}}(T) + \lambda u^{\partial^{-2}_t \ddot{f}}(T)\|^2_{c^{-2},M} + \|\partial^{-2}_t \ddot{f}(T)-\mf{f}\|^2_{\partial M}.
$$
Using the operators $W,S,\partial^{-2}_t$, the optimization can be written as 
\begin{equation} \label{eq:keyopt}
\begin{aligned}
 \min_{\ddot{f} \in L^2} \|W \ddot{f} + \lambda W \partial^{-2}_t \ddot{f}\|^2_{c^{-2},M} + \|S \ddot{f}-\mf{f}\|^2_{\partial M}.
\end{aligned}
\end{equation}
This optimization can be viewed as the least-squares formulation of the linear system
\begin{equation} \label{eq:keyls}
\left(
\begin{array}{c}
     c^{-1}(W + \lambda W \partial^{-2}_t)  \\
     S 
\end{array}
\right) \ddot{f} = 
\left(
\begin{array}{c}
     0  \\
     \mf{f} 
\end{array}
\right) \qquad \text{ on } M \times\partial M.
\end{equation}

This motivates consideration of the following operator $A_\lambda$:
\begin{equation} \label{eq:Alambda}
A_\lambda:=\left(
\begin{array}{c}
     c^{-1} W (1 + \lambda  \partial^{-2}_t)  \\
     S 
\end{array}
\right).
\end{equation}
By Lemma~\ref{thm:operators}, all the operators in the definition of $A_\lambda$ are continuous. As $c\in C^\infty(M)$ is a smooth positive function bounded away from 0 and infinity, we conclude that 
$$
A_\lambda: L^2((0,T)\times\partial M) \rightarrow L^2(M) \times L^2(\partial M) 
$$ is continuous for any $\lambda\in\mathbb{C}$.
On the other hand, by using the natural embedding $L^{2}(M)\times L^{2}(\partial M) \hookrightarrow H^{-1}(M)\times H^{-\frac{1}{2}}(\partial M)$ where $H^s$ denotes the usual $L^2$-based Sobolev space of order $s\in\mathbb{R}$, we can also view $A_\lambda$ as the following continuous linear operator with the extended co-domain:
$$
A_\lambda: L^2((0,T)\times\partial M) \rightarrow H^{-1}(M)\times H^{-\frac{1}{2}}(\partial M).
$$

\bigskip
\section{Some Density Results} \label{sec:density}

A key observation in our proof is that $A_\lambda$ has a dense range under suitable assumptions. This section establishes two density results for $A_\lambda$ assuming approximate and exact controllability of the wave equation respectively.

First, we equip $A_\lambda$ with the extended co-domain $H^{-1}(M)\times H^{-\frac{1}{2}}(\partial M)$. Recall the following approximation controllability for the wave equation~\eqref{eq:ibvp}, which is a consequence of Tataru's unique continuation result~\cite{tataru1995unique, Tataru99unique}. Here, $H^1_0(M)$ is the Sobolev space of order 1 on $M$ with vanishing Dirichlet boundary conditions.
\begin{lemma}{\cite[Theorem 4.28]{katchalov2001inverse}} \label{thm:approxcontrol}
    For $T>2 T^*$, the linear subspace
    $
    \{(u^f(T), \partial_t u^f(T)): \; f\in C^\infty_c((0,T)\times\partial M)\}
    $
    is dense in $H^1_0(M)\times L^2(M)$.
\end{lemma}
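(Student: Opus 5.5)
We argue by duality and interpret $u^f$ here as the solution of the wave equation $\partial_t^2u+\mathfrak{P}u=0$ with zero initial data and \emph{Dirichlet} data $u|_{(0,T)\times\partial M}=f$. This is the setting of \cite[Theorem 4.28]{katchalov2001inverse}. Since $f\in C^\infty_c((0,T)\times\partial M)$ vanishes near $t=T$, we have $u^f(T)\in H^1_0(M)$, so the set in the statement is a linear subspace of $H^1_0(M)\times L^2(M)$. To prove density it suffices to show that its annihilator is trivial. Identify the dual of $H^1_0(M)\times L^2(M)$ with $H^{-1}(M)\times L^2(M)$ through the $c^{-2}$-weighted pairing $(\cdot,\cdot)_{c^{-2},M}$, which is equivalent to the unweighted one. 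Suppose $(a,b)\in H^{-1}(M)\times L^2(M)$ satisfies
\[
(u^f(T),a)_{c^{-2},M}+(\partial_tu^f(T),b)_{c^{-2},M}=0\qquad\text{for all } f\in C^\infty_c((0,T)\times\partial M).
\]

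\textbf{Adjoint problem and Green's identity.} Let $e$ solve $\partial_t^2e+\mathfrak{P}e=0$ in $(0,T)\times M$ with $e|_{(0,T)\times\partial M}=0$ and final data $e(T)=b$, $\partial_te(T)=-a$. This problem is well posed with $e\in C([0,T];L^2(M))\cap C^1([0,T];H^{-1}(M))$. Formally,
\[
\frac{d}{dt}\Big[(\partial_tu,e)_{c^{-2},M}-(u,\partial_te)_{c^{-2},M}\Big]=(\Delta_gu,e)_M-(u,\Delta_ge)_M=-(f,\partial_\nu e)_{\partial M},
\]
because the $q$-terms cancel and $e=0$ on $\partial M$. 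Integrating over $(0,T)$ and using the zero initial data of $u$, the left-hand side becomes exactly the annihilation pairing. Hence $(f,\partial_\nu e)_{(0,T)\times\partial M}=0$ for all admissible $f$, that is, $\partial_\nu e=0$ on $(0,T)\times\partial M$.

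To make this rigorous at the low regularity of $e$, first mollify in time: set $e_\delta=\rho_\delta*_te$ on $(\delta,T-\delta)$. Then $e_\delta$ is smooth in $t$, so $e_\delta(t)\in H^2(M)$ by elliptic regularity, and $e_\delta$ still has zero Cauchy data $e_\delta=\partial_\nu e_\delta=0$ on $(\delta,T-\delta)\times\partial M$. Choose $\delta$ small enough that $T-2\delta>2T^*$.

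\textbf{Unique continuation.} This is the main step. The function $e_\delta$ solves a wave equation with time-independent coefficients and has vanishing Cauchy data on $(\delta,T-\delta)\times\partial M$. Set $t_0=T/2$. Tataru's unique continuation theorem \cite{tataru1995unique,Tataru99unique} shows that $e_\delta$ vanishes on the double cone
\[
\{(t,x): d(x,\partial M)<\tfrac{T-2\delta}{2}-|t-t_0|\}.
\]
Since $\frac{T-2\delta}{2}>T^*\ge d(x,\partial M)$ for every $x\in M$, this cone contains $\{t_0\}\times M$. Therefore $e_\delta(t_0)=\partial_te_\delta(t_0)=0$, and uniqueness for the Dirichlet problem gives $e_\delta\equiv0$.

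\textbf{Conclusion.} Letting $\delta\to0$ yields $e\equiv0$, hence $a=-\partial_te(T)=0$ and $b=e(T)=0$. The annihilator is therefore trivial, which proves density. The delicate point I expect is the justification of Green's identity and of the Cauchy-data vanishing for the weak solution $e$. The time-mollification handles this, and it is exactly here that the strict inequality $T>2T^*$ leaves the room needed to shrink the time interval.
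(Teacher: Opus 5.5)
Your duality argument correctly proves the \emph{Dirichlet}-control version. It is the standard mechanism behind the cited result: approximate controllability obtained as the dual of Tataru's unique continuation. The paper itself gives only the citation. The problem is that this is not the statement the paper relies on. Throughout the paper $u^f$ denotes the solution of the Neumann problem \eqref{eq:ibvp}. The lemma is used in exactly that sense in the proof of Lemma~\ref{thm:ourapproxcontrol}, where $\tilde u$ solves $\partial_\nu\tilde u=h$ on $\partial\tilde M$ and must approximate $\tilde\phi\in H^1_0(\tilde M)$ in $H^1(\tilde M)$. Whatever the setting of the cited theorem, switching from Neumann to Dirichlet controls changes the reachable set, so as written you have not established what is needed. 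Your reinterpretation is understandable. For Neumann controls $u^f(T)|_{\partial M}=(\Lambda f)(T)$ need not vanish, so the set is not contained in $H^1_0(M)\times L^2(M)$. The lemma must then be read as saying that its closure in $H^1(M)\times L^2(M)$ contains $H^1_0(M)\times L^2(M)$, and that is the reading you need to prove.

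The repair is a small change to your own argument.
\begin{itemize}
\item Take the annihilator $(a,b)$ in $(H^1(M))'\times L^2(M)$.
\item Let $e$ solve the backward problem with the homogeneous \emph{Neumann} condition $\partial_\nu e=0$ and final data $e(T)=b$, $\partial_te(T)=-a$. Then $e\in C([0,T];L^2(M))\cap C^1([0,T];(H^1(M))')$.
\item The same Green computation now gives
\[
\frac{d}{dt}\Big[(\partial_tu,e)_{c^{-2},M}-(u,\partial_te)_{c^{-2},M}\Big]=(f,e)_{\partial M},
\]
so the annihilation condition forces $e|_{(0,T)\times\partial M}=0$.
\item After time mollification, $e_\delta(t)$ lies in the domain of every power of the Neumann realization of $\mathfrak{P}$. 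Hence $e_\delta$ is smooth with $\partial_\nu e_\delta=0$, and $e_\delta|_{\partial M}=0$ by the identity above.
\item The Cauchy data vanish again, and your Tataru step with $T-2\delta>2T^*$ goes through verbatim.
\end{itemize}
This proves that the Neumann reachable set with $f\in C^\infty_c((0,T)\times\partial M)$ is dense in all of $H^1(M)\times L^2(M)$. That is more than the paper needs, and it already implies Lemma~\ref{thm:ourapproxcontrol} as stated.

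Alternatively, keep your Dirichlet lemma and change the proof of Lemma~\ref{thm:ourapproxcontrol} instead. That proof only uses that $\tilde u$ is a smooth wave in $\tilde M$ with zero initial data which, by finite speed of propagation, vanishes near $t=0$ on $\partial M$. A Dirichlet-controlled $\tilde u$ on $\tilde M$ serves equally well there, but the proof must be rewritten to say so.
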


Using Lemma~\ref{thm:approxcontrol}, we establish another approximate controllability result that enables approximation to functions in $H^1(M)$ with non-vanishing Dirichlet boundary conditions.

\begin{lemma} \label{thm:ourapproxcontrol}
    For $T>2T^*$, the linear subspace
    $$
    \{(u^f(T), \partial_t u^f(T)): \; f\in C_c^\infty((0,T]\times\partial M) \}
    $$
    is dense in $H^1(M) \times L^2(M)$.
\end{lemma}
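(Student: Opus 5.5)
Fix a target pair $(U_0,U_1)\in H^1(M)\times L^2(M)$ and $\epsilon>0$. The plan is to peel off the nonzero Dirichlet trace of $U_0$ with one explicitly constructed wave, and then correct the remainder, which lies in $H^1_0(M)\times L^2(M)$, using Lemma~\ref{thm:approxcontrol}. The new feature of the class $C^\infty_c((0,T]\times\partial M)$ is that a control need not vanish near $t=T$. This freedom is what allows $u^f(T)$ to have a nonzero boundary trace: a control supported away from $t=T$ gives a wave that near $t=T$ satisfies homogeneous Neumann data, but it can still have arbitrary Dirichlet trace. Indeed, the traces $u^f(T)|_{\partial M}$ do not obviously vanish even for compactly supported $f$. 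So the main point is the closure argument below, rather than the trace issue itself.

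\textbf{Step 1.} Reduce to a dense set of targets. It suffices to approximate pairs $(U_0,U_1)$ with $U_0\in C^\infty(M)$, $U_1\in C^\infty_c(M^\circ)$, since these are dense in $H^1(M)\times L^2(M)$.

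\textbf{Step 2.} Produce some wave matching the boundary trace at time $T$. Choose a smooth $w$ on $[0,T]\times M$ with $w\equiv 0$ near $t=0$, $w(T)=U_0$ and $\partial_t w(T)=U_1$; for instance $w(t)=\chi(t)\,(U_0+(t-T)U_1)$ with a cutoff $\chi$ that equals $1$ near $T$. Set $f_1:=\partial_\nu w|_{(0,T]\times\partial M}$, which lies in $C^\infty_c((0,T]\times\partial M)$. Then $v:=w-u^{f_1}$ solves the wave equation with source $F=-(\partial_t^2+\mf{P})w$, zero Neumann data and zero initial data. Hence $u^{f_1}(T)=U_0-v(T)$, but $v(T)$ need not vanish, so $w$ alone does not finish the job.

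\textbf{Step 3.} Show that the closure $\overline{\mathcal{R}}$ of the reachable set in $H^1(M)\times L^2(M)$ is everything. By Lemma~\ref{thm:approxcontrol}, $\overline{\mathcal{R}}\supset H^1_0(M)\times L^2(M)$. Since $H^1(M)\times L^2(M)$ modulo $H^1_0(M)\times L^2(M)$ is isomorphic to $H^{1/2}(\partial M)$ through the trace, it remains to show that the traces $\{u^f(T)|_{\partial M}: f\in C^\infty_c((0,T]\times\partial M)\}$ are dense in $H^{1/2}(\partial M)$. Then for any target, subtract a reachable pair with nearly the right trace, correct the trace defect by a small $H^1$ extension, and approximate the remainder in $H^1_0\times L^2$ via Lemma~\ref{thm:approxcontrol}.

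\textbf{Step 4 (main obstacle).} Establish density of the traces at time $T$. I would take $f(t,x)=\psi(t)\,\mf{h}(x)$ with $\psi$ concentrated in a short window $(T-\delta,T]$ and $\psi(T)\neq0$, and study the short-time asymptotics of the Neumann problem. For such data, $u^f(T)|_{\partial M}$ should be approximately $\delta$-weighted $\Lambda$-type smoothing of $\mf{h}$. As $\delta\to0$ the local ND map behaves like $(-\Delta_{\partial M})^{-1/2}$-type operators applied to $\mf{h}$, whose range is dense.

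Should this asymptotic argument become delicate, I would instead use a duality argument. Suppose $\phi\in H^{-1/2}(\partial M)$ annihilates all such traces. Write $\langle u^f(T),\phi\rangle_{\partial M}$ as $\int_0^T(f,\Phi)_{\partial M}\,dt$, where $\Phi$ solves a backward wave problem with singular boundary source $\phi$ at $t=T$. Vanishing for all $f$ gives $\Phi|_{(0,T)\times\partial M}=0$, and Tataru's unique continuation then forces $\phi=0$.

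I expect the precise construction and justification of this dual problem with a boundary-supported final datum to be the hardest step.
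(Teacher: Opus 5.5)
\textbf{Gap: Step 4 is the whole content of the lemma, and it is not proved.} Steps 1 and 3 give a correct reduction, and Step 2 is never used. The closure of the reachable set is a closed subspace that contains $H^1_0(M)\times L^2(M)$ by Lemma~\ref{thm:approxcontrol}, and the trace $H^1(M)\to H^{1/2}(\partial M)$ has a bounded right inverse. So it suffices to show that $\{u^f(T)|_{\partial M}\}$ is dense in $H^{1/2}(\partial M)$. But that density is exactly what is new in the lemma, and your Step 4 does not establish it.

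The short-pulse asymptotics is a heuristic, not an argument. It is also mis-described: at short times the ND map acts to leading order like $c\,\partial_t^{-1}$, so the trace is approximately $c\,(\int\psi\,dt)\,\mf h$, not a $(-\Delta_{\partial M})^{-1/2}$-type smoothing of $\mf h$.

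The duality sketch omits precisely the parts that need proof:
\begin{itemize}
\item You must solve the backward Neumann problem with final velocity $c^2\phi\,\delta_{\partial M}\in (H^1(M))^*$ in the weak class $C([0,T];L^2(M))\cap C^1([0,T];(H^1(M))^*)$.
\item You must give a meaning to the boundary trace of such a low-regularity $\Phi$ and justify your duality identity.
\item You must apply Tataru's theorem to a non-smooth solution, for example after mollifying in $t$.
\item ``Tataru then forces $\phi=0$'' is not immediate. Vanishing Cauchy data on $(0,T)\times\partial M$ only give $\Phi=0$ in the double cone $\{d(x,\partial M)<\min(t,T-t)\}$. That region pinches down to $\partial M$ exactly at $t=T$, which is where your datum sits. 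You need an extra step: use $T/2>T^*$ to get $\Phi=0$ near $t=T/2$ on all of $M$, then uniqueness for the homogeneous Neumann problem, then continuity of $\partial_t\Phi$ in $(H^1(M))^*$ up to $t=T$.
\end{itemize}
The route can probably be completed, but as written the decisive step is missing.

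\textbf{The idea that makes all of this unnecessary.} The paper avoids the trace-density question by enlarging the manifold.
\begin{enumerate}
\item Enlarge $M$ to a compact $\tilde M$ with $M\subset\tilde M^\circ$ and $T>2\tilde T^*$. A thin collar works, since $T>2T^*$ strictly.
\item Extend the target to $(\tilde\phi,\tilde\psi)\in H^1_0(\tilde M)\times L^2(\tilde M)$ and apply Lemma~\ref{thm:approxcontrol} on $\tilde M$ to obtain a wave $\tilde u$.
\item Take $f:=\partial_\nu\tilde u|_{(0,T]\times\partial M}$ as the control on $M$.
\end{enumerate}
This $f$ is smooth up to $t=T$, and it vanishes near $t=0$ by finite speed of propagation. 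By uniqueness, $u^f=\tilde u$ on $[0,T]\times M$, so the $H^1(M)\times L^2(M)$ error is bounded by the $H^1(\tilde M)\times L^2(\tilde M)$ error. The nonzero Dirichlet trace on $\partial M$ comes for free because $\partial M$ is an interior hypersurface of $\tilde M$. No dual problem with boundary-concentrated data is needed.
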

\begin{proof}
Extend $M$ to a larger compact manifold with boundary $\tilde M$ such that $M$ is contained in the interior of $\tilde{M}$ and $T>2 \tilde{T}^*$, where $\tilde{T}^* := \max_{x\in \tilde{M}} d(x,\partial \tilde{M})$ is the critical time for the manifold $\tilde M$. Likewise, extend the differential operator $\mf{P}$ on $M$ to a smooth operator $\tilde{\mf{P}}$ on $\tilde M$.
For any $\epsilon>0$ and any $(\phi,\psi)\in H^1(M)\times L^2(M)$, we extend them to $(\tilde{\phi}, \tilde{\psi})\in H^1_0(\tilde M) \times L^2(\tilde{M})$. Lemma~\ref{thm:approxcontrol} ensures the existence of a function $h\in C^\infty_c((0,T)\times\partial \tilde{M})$ such that $\|\tilde{u}(T) - \tilde\phi\|_{H^1(\tilde{M})} + \|\partial_t \tilde{u}(T) - \tilde\psi\|_{L^2(\tilde{M})} < \epsilon$, where $\tilde{u}=\tilde{u}(t,x)$ solves
$$
\left\{
\begin{array}{rcl}
\partial^2_t \tilde{u} + \tilde{\mf{P}} \tilde{u} & = & 0 \quad\quad\quad \text{ in } (0,\infty) \times \tilde{M}, \\
 \partial_\nu \tilde{u}  & = & h \quad\quad\quad \text{ on } (0,\infty) \times \partial \tilde{M}, \\ 
 \tilde{u}(0,x) = \partial_t \tilde{u}(0,x) & = & 0 \quad\quad\quad\quad x \in \tilde{M}.
\end{array}
\right.
$$
Take $f:= \partial_\nu \tilde{u}|_{(0,T]\times\partial M}$, then $f$ is smooth (up to $t=T$) since $\tilde{u}$ is; $f$ is zero near $t=0$ since $h$ is compactly supported and wave propagates at finite speed. If $u=u^f(t,x)$ solves 
$$
\left\{
\begin{array}{rcl}
\partial^2_t u + \mf{P} u & = & 0 \quad\quad\quad \text{ in } (0,T) \times M, \\
 \partial_\nu u  & = & f \quad\quad\quad \text{ on } (0,T) \times \partial M, \\ 
 u(0,x) = \partial_t u(0,x) & = & 0 \quad\quad\quad\quad x \in M,
\end{array}
\right.
$$
then $u=\tilde{u}$ on $[0,T]\times M$ by the well-posedness of the initial boundary value problem. Therefore,
$$
\|u(T) - \phi\|_{H^1(M)} + \|\partial_t u(T) - \psi\|_{L^2(M)} \leq 
\|\tilde{u}(T) - \tilde\phi\|_{H^1(\tilde{M})} + \|\partial_t \tilde{u}(T) - \tilde\psi\|_{L^2(\tilde{M})} < \epsilon.
$$
This proves the desired density result.
\end{proof}

Using Lemma~\ref{thm:ourapproxcontrol}, we show that the range of $A_\lambda$ is dense in the extended co-domain.

\begin{prop} \label{thm:denserange}
Let $T>2T^*$ and $\lambda\in\mathbb{C}$ be a non-Neumann eigenvalue of $\mf{P}$. Then
$$
A_\lambda: L^2((0,T)\times\partial M) \rightarrow H^{-1}(M)\times H^{-\frac{1}{2}}(\partial M).
$$
has a dense range in $H^{-1}(M)\times H^{-\frac{1}{2}}(\partial M)$.
\end{prop}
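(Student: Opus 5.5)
The plan is to prove density of the range by showing that its annihilator is trivial. The dual of $H^{-1}(M)\times H^{-\frac12}(\partial M)$ is $H^1(M)\times H^{\frac12}(\partial M)$, using $H^{-1}(M)=(H^1(M))^*$ as the dual convention. So suppose $(\phi,\psi)\in H^1(M)\times H^{\frac12}(\partial M)$ satisfies
\[
\bigl(c^{-1}W(1+\lambda\partial_t^{-2})f,\phi\bigr)_M+(Sf,\psi)_{\partial M}=0
\quad\text{for all } f\in L^2((0,T)\times\partial M).
\]
The pairings are bilinear, which is harmless since $\lambda\in\mathbb{C}$. By continuity it suffices to test with $f=\partial_t^2h$, where $h\in C^\infty_c((0,T]\times\partial M)$ is extended smoothly past $T$ as in Remark~\ref{thm:extensionremark}. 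For such $f$ we have $Wf=\partial_t^2u^h(T)=-\mf{P}u^h(T)$, $W\partial_t^{-2}f=u^h(T)$ and $Sf=h(T)$. Set $\tilde\phi:=c\phi\in H^1(M)$ and
\[
B(v,w):=-\int_M\nabla_g v\cdot\nabla_g w\,dx+\int_M(\lambda-q)c^{-2}vw\,dx .
\]
Green's formula, together with $\partial_\nu u^h(T)=h(T)$, turns the annihilation condition into
\[
B(u^h(T),\tilde\phi)+\bigl(h(T),\tilde\phi|_{\partial M}+\psi\bigr)_{\partial M}=0
\qquad\text{for all admissible } h. \qquad (\ast)
\]

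\textbf{Step 1 (interior equation).} Restrict $(\ast)$ to $h\in C^\infty_c((0,T)\times\partial M)$, so that $h(T)=0$. By Lemma~\ref{thm:approxcontrol}, the states $u^h(T)$ are dense in $H^1_0(M)$, so $B(v,\tilde\phi)=0$ for all $v\in H^1_0(M)$. This says $\tilde\phi$ is a weak solution of $(\mf{P}-\lambda)\tilde\phi=0$ in $M$. Consequently $\Delta_g\tilde\phi\in L^2(M)$, $\partial_\nu\tilde\phi\in H^{-\frac12}(\partial M)$ is well defined, and for every $v\in H^1(M)$,
\[
B(v,\tilde\phi)=-\langle v|_{\partial M},\partial_\nu\tilde\phi\rangle .
\]
Then $(\ast)$ becomes
\[
\bigl(h(T),\tilde\phi+\psi\bigr)_{\partial M}=\langle u^h(T)|_{\partial M},\partial_\nu\tilde\phi\rangle
\qquad\text{for all } h\in C^\infty_c((0,T]\times\partial M).
\]

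\textbf{Step 2 (decoupling $h(T)$ from $u^h(T)$).} This is the main obstacle. I want to show separately that $\partial_\nu\tilde\phi=0$ and that $\tilde\phi|_{\partial M}+\psi=0$.

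First I would fix $g\in C^\infty(\partial M)$ and take $h_\varepsilon=\chi_\varepsilon(t)g$, where $\chi_\varepsilon$ is supported in $(T-\varepsilon,T]$ with $\chi_\varepsilon(T)=1$. Then $h_\varepsilon(T)=g$, while $\|h_\varepsilon\|_{L^2}\to0$, so by \eqref{eq:SolutionOperator} $u^{h_\varepsilon}(T)\to0$ in $H^{\frac35-\epsilon}(M)$. The difficulty is that this does not yet control the pairing with $\partial_\nu\tilde\phi\in H^{-\frac12}$; for that I need $u^{h_\varepsilon}(T)\rightharpoonup0$ weakly in $H^1(M)$.

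To get this, I would try to choose the profile $\chi_\varepsilon$ so that the $H^1$ energy stays bounded. Failing that, I would write $B(u^{h_\varepsilon}(T),\tilde\phi)$ back through $(\ast)$ and use $\mf{P}u=-u_{tt}$ to move derivatives onto the smooth function $\tilde\phi$. With either route, letting $\varepsilon\to0$ gives $(g,\tilde\phi+\psi)_{\partial M}=0$ for all $g$, hence $\psi=-\tilde\phi|_{\partial M}$.

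With this, the identity from Step 1 reduces to $\langle u^h(T)|_{\partial M},\partial_\nu\tilde\phi\rangle=0$ for all admissible $h$. By Lemma~\ref{thm:ourapproxcontrol}, the states $u^h(T)$ are dense in $H^1(M)$, and the trace map is onto $H^{\frac12}(\partial M)$. Therefore $\partial_\nu\tilde\phi=0$.

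\textbf{Step 3 (conclusion).} Now $\tilde\phi$ solves $(\mf{P}-\lambda)\tilde\phi=0$ with $\partial_\nu\tilde\phi=0$. Since $\lambda$ is not a Neumann eigenvalue, $\tilde\phi=0$, so $\phi=0$, and then $\psi=-\tilde\phi|_{\partial M}=0$. Thus the annihilator is trivial and the range of $A_\lambda$ is dense.
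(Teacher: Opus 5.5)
Your annihilator set-up, Step~1 and Step~3 are fine. The gap is Step~2, which you correctly single out as the crux but leave unproved; everything after it ($\psi=-\tilde\phi|_{\partial M}$, then $\partial_\nu\tilde\phi=0$) depends on it. Since $\partial_\nu\tilde\phi$ is only known to lie in $H^{-\frac12}(\partial M)$, you need $u^{h_\varepsilon}(T)|_{\partial M}\rightharpoonup 0$ in $H^{\frac12}(\partial M)$. In effect that means a uniform $H^1(M)$ bound on $u^{h_\varepsilon}(T)$. The estimate~\eqref{eq:SolutionOperator} only gives convergence in $H^{\frac35-\epsilon}(M)$, which is too weak, and you do not supply the bound. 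Your fallback route does not work as described. First, $\tilde\phi$ is smooth only in the interior of $M$: near $\partial M$ you know just $\tilde\phi\in H^1(M)$ and $\partial_\nu\tilde\phi\in H^{-\frac12}(\partial M)$, and this boundary behaviour is exactly the unknown. Second, rewriting $B(u^{h_\varepsilon}(T),\tilde\phi)$ through $(\ast)$ simply returns the identity you started from.

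Your first route can be completed with an elementary energy estimate. The point is to integrate the boundary term by parts in time, so that only the scale-invariant quantity $\int_0^T\|\partial_t h_\varepsilon\|_{\partial M}\,dt$ enters. Take $h_\varepsilon(t,x)=\chi((t-T)/\varepsilon)g(x)$, where $\chi\in C^\infty(\mathbb{R})$, $\chi=0$ on $(-\infty,-1]$, $\chi(0)=1$, and $g\in C^\infty(\partial M)$ is real. Then $u_\varepsilon:=u^{h_\varepsilon}$ is smooth and vanishes for $t\le T-\varepsilon$. Set $E(t)=\frac12\int_M(c^{-2}|\partial_t u_\varepsilon|^2+|\nabla_g u_\varepsilon|^2+|u_\varepsilon|^2)\,dx$. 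Then $E'(t)=(h_\varepsilon(t),\partial_t u_\varepsilon(t))_{\partial M}+\int_M(1-c^{-2}q)u_\varepsilon\partial_t u_\varepsilon\,dx$, and integrating by parts in time gives
\[
E(t)=(h_\varepsilon(t),u_\varepsilon(t))_{\partial M}-\int_{T-\varepsilon}^{t}(\partial_s h_\varepsilon(s),u_\varepsilon(s))_{\partial M}\,ds+\int_{T-\varepsilon}^{t}\int_M(1-c^{-2}q)\,u_\varepsilon\,\partial_s u_\varepsilon\,dx\,ds .
\]
Two facts control the right-hand side: $\int_0^T\|\partial_t h_\varepsilon\|_{\partial M}\,dt\le\|\chi'\|_{L^1}\|g\|_{\partial M}$, independent of $\varepsilon$, and $\|u_\varepsilon(s)\|_{\partial M}\le C\|u_\varepsilon(s)\|_{H^1(M)}\le C E(s)^{1/2}$. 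Hence $\sup_{t\le T} E\le C\|g\|_{\partial M}\sup_{t\le T} E^{1/2}+C\varepsilon\sup_{t\le T} E$, so $\sup_{t\le T} E^{1/2}\le C\|g\|_{\partial M}$ uniformly for small $\varepsilon$. Moreover $\|u_\varepsilon(T)\|_M\le\int_{T-\varepsilon}^T\|\partial_t u_\varepsilon\|_M\,dt\le C\varepsilon\|g\|_{\partial M}$. Thus $u_\varepsilon(T)\rightharpoonup 0$ in $H^1(M)$ and its trace tends weakly to $0$ in $H^{\frac12}(\partial M)$. The pairing with $\partial_\nu\tilde\phi$ therefore vanishes in the limit, which gives $(g,\tilde\phi|_{\partial M}+\psi)_{\partial M}=0$, and the rest of your argument goes through.

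For comparison, the paper does not pass through the annihilator. It solves $(\mf{P}-\lambda)\phi=-c\mf{v}$, $\partial_\nu\phi=\mf{f}$, approximates $\phi$ in $H^1(M)$ by $u^{f_j}(T)$ via Lemma~\ref{thm:ourapproxcontrol}, and reads off convergence of both components of $A_\lambda\ddot f_j$. In doing so it asserts $f_j(T)=\partial_\nu u^{f_j}(T)\to\partial_\nu\phi$ in $H^{-\frac12}(\partial M)$, which $H^1$ convergence alone does not give, since the normal derivative is not continuous on $H^1(M)$. So the decoupling you isolated is a genuine issue, not an artifact of your formulation. Your duality route, once Step~2 is filled in as above, handles it cleanly.
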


\begin{proof}

For any $\mf{v}\in H^{-1}(M)$ and any $\mf{f} \in H^{-\frac{1}{2}}(\partial M)$, let $\phi = \phi_\lambda \in H^1(M)$ be the weak solution of the following elliptic BVP
$$
(\mf{P} - \lambda) \phi = -c \mf{v} \quad \text{ in } M, \qquad
\partial_\nu \phi|_{\partial M} = \mf{f}.
$$
By Proposition~\ref{thm:ourapproxcontrol}, there exists a sequence $f_j \in C^\infty_{c}((0,T]\times\partial\Omega)$ such that $Wf_j = u^{f_j}(T) \rightarrow \phi$ in $H^1(M)$ as $j\rightarrow\infty$. Therefore,
\begin{align*}
 c^{-1} W (1+\lambda \partial^{-2}_t) \ddot{f}_j & = c^{-1} \left( u^{\partial^2_t f_j}(T) + \lambda  W f_j \right) = c^{-1} \left( \partial^2_t u^{f_j}(T) + \lambda u^{f_j}(T) \right) \\ 
 & = - c^{-1} \left( \mf{P}u^{f_j}(T) - \lambda u^{f_j}(T) \right) \rightarrow -c^{-1} (\mf{P}-\lambda) \phi = \mf{v} \quad \text{ in } H^{-1}(M)
\end{align*}
and
$$
S\ddot{f}_j = f_j(T) = \partial_\nu u^{f_j}(T) \rightarrow \partial_\nu \phi = \mf{f} \quad \text{ in } H^{-\frac{1}{2}}(\partial M).
$$
as $j\rightarrow\infty$. Hence, any pair $(\mf{v}, \mf{f})\in H^{-1}(M)\times H^{-\frac{1}{2}}(\partial M)$ is in the closure of the range of $A_\lambda$. 

\end{proof}

Next, we equip $A_\lambda$ with the co-domain $L^2(M) \times L^2(\partial M)$. Let us first introduce some notations in order to discuss the exact controllability of the wave equation. As the operator $\mf{P}$ involves the wave speed $c$, it is natural to consider the travel time metric $c^{-2}g$ on $M$. Let $SM^\circ$ denote the unit sphere bundle over the interior of $(M, c^{-2}g)$, that is, $SM^\circ :=\{(x,v)\in SM: x\in M\setminus\partial M\}$ where $SM$ is the unit sphere bundle of $M$ with respect to the travel time metric $c^{-2}g$. 
For $(x,v)\in SM^\circ$, we denote by $\gamma_{x,v}(t)$ the unit speed geodesic on $M$ with $\gamma(0) = x$ and $\dot\gamma(0) = v$. The \textit{first exit time} of $(x,v)\in SM^\circ$ is defined as $\tau(x,v) := \inf\{t>0: \gamma_{x,v}(t)\in\partial M\}$ with the convention $\tau(x,v) = \infty$ if the geodesic $\gamma_{x,v}$ never meets $\partial M$.
Define
$$
\tau_{max} := \sup_{(x,v)\in SM^\circ} \tau(x,v).
$$
The manifold $(M,c^{-2}g)$ is said to be \textit{non-trapping} if $\tau_{max}<\infty$, in which case all the maximal geodesics starting in the interior hit $\partial M$ in finite time.

The exact controllability of the wave equation concerns the surjectivity of $W$. More precisely, the wave equation is said to be \textit{exactly controllable} at time $T>0$ if for a target function $\phi(x)$, there exists a Neumann source $f$ on $[0,T]\times\partial M$ such that $u^f(T)=\phi$. 
It is shown by Bardos, Lebeau and Rauch~\cite{bardos1992sharp} that the exact controllability property at time $T$ is equivalent to the Geometric Control Condition (GCC) , which in our setup asserts that all the rays of geometric optics in $M$ hit $\partial M$ in a time smaller than $T$, or equivalent, $T>\tau_{max}$. The exact controllability we need is the one with regularity proved in~\cite[Theorem 5.4]{ervedoza2010systematic}. The next lemma, shown in~\cite[Proposition 4]{oksanen2022linearized}, is a restatement of~\cite[Theorem 5.4]{ervedoza2010systematic} for a smooth $\phi$.

\begin{lemma}{\cite[Proposition 4]{oksanen2022linearized}} \label{thm:exactcontrol}
Suppose $T>\tau_{max}$. For any $\phi \in C^\infty(M)$, there is $f \in C_c^\infty((0,T] \times \partial M)$ such that $u^f(T) = \phi$ in $M$. Moreover, there is $C>0$, independent of $\phi$, such that 
$$
\|f\|_{H^2((0,T) \times \partial \Omega)} \le C \|\phi\|_{H^4(\Omega)}.
$$
\end{lemma}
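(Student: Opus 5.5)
This is the regularized exact controllability result of Ervedoza--Zuazua \cite[Theorem 5.4]{ervedoza2010systematic}, specialized to smooth targets as in \cite[Proposition 4]{oksanen2022linearized}. I would follow the HUM-based construction.

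\textbf{Step 1: set up the HUM operator.} For $T>\tau_{max}$ the Bardos--Lebeau--Rauch geometric control condition holds, which gives an observability inequality for the adjoint (backward) wave equation $\partial_t^2 w+\mf{P}w=0$ with homogeneous Neumann condition: the final data $(w(T),\partial_t w(T))$ in the energy space are controlled by $\|w\|_{L^2((0,T)\times\partial M)}$ plus a compact lower-order term. The compact term is removed by a uniqueness--compactness argument.

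Define the Gramian $\mathcal{G}:=\mathcal{W}\mathcal{W}^*$, where $\mathcal{W}$ maps Neumann data to the final state. The observability inequality makes $\mathcal{G}$ coercive, so the exact control $f=\mathcal{W}^*\mathcal{G}^{-1}(\phi,\psi)$ exists. Here $\psi$ is any chosen velocity, for instance $\psi=0$. This alone gives only $f\in L^2$ with no smoothness and no support condition near $t=0$.

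\textbf{Step 2: insert a time cutoff.} Following Ervedoza--Zuazua, I would replace the observation by a weighted one: insert $\eta(t)$ with $\eta\in C^\infty$, $\eta=0$ near $t=0$, $\eta>0$ on $(\delta,T]$. We may take $\delta$ small because $T>\tau_{max}$ strictly, so observability still holds on $(\delta,T)$.

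The control is $f=\eta\,\partial_\nu$-trace adjoint of the solution $w$ with final data $\mathcal{G}_\eta^{-1}(\phi,\psi)$. Since $\eta$ vanishes near $0$, $f$ is supported in $(0,T]$, as required.

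\textbf{Step 3: regularity (the main obstacle).} One must show that $\mathcal{G}_\eta$ maps $D(\mathfrak{A}^k)$ onto itself, where $\mathfrak{A}$ is the wave generator with Neumann conditions. The point is that $\mathcal{G}_\eta$ commutes with $\mathfrak{A}$ up to a term involving $\eta'$. This term is handled inductively by differentiating in $t$, since time derivatives of the homogeneous solutions correspond to powers of $\mathfrak{A}$.

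Consequently, smooth final data $(\phi,0)$ produce a $w$ that is smooth in $t$ with smooth traces, so $f\in C^\infty_c((0,T]\times\partial M)$. The quantitative bound follows from tracking two steps: $\|(\phi,0)\|_{D(\mathfrak{A}^2)}\lesssim\|\phi\|_{H^4}$, modulo the compatibility conditions handled in \cite{ervedoza2010systematic}, and $\|f\|_{H^2}\lesssim\|\mathcal{G}_\eta^{-1}(\phi,0)\|_{D(\mathfrak{A}^2)}$. I expect the commutator and compatibility bookkeeping at the Neumann boundary to be the delicate part. I would rely on \cite{ervedoza2010systematic} for it rather than redoing it.
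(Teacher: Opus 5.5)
The paper gives no proof of this lemma; it imports it from \cite{oksanen2022linearized}, which restates \cite{ervedoza2010systematic}. Leaning on Ervedoza--Zuazua is therefore the right spirit. However, your adaptation of their method has a genuine gap, located exactly where you wrote ``modulo the compatibility conditions''.

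Any smooth $f$ with $u^f(T)=\phi$ must satisfy $f(T)=\partial_\nu u^f(T)=\partial_\nu\phi$. This is nonzero for a general $\phi\in C^\infty(M)$; in the application, Proposition~\ref{thm:L2density}, it equals $\mf{f}$. Consequently $(\phi,0)\notin D(\mathfrak{A}^2)$ for the Neumann wave generator, since membership forces $\partial_\nu\phi=0$. So the bound $\|(\phi,0)\|_{D(\mathfrak{A}^2)}\lesssim\|\phi\|_{H^4}$ is not available at all.

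Ervedoza--Zuazua do not ``handle'' these conditions; they assume them. Their theorem takes data in $D(\mathfrak{A}^k)$ and uses a weight vanishing at both $t=0$ and $t=T$. That produces controls vanishing near $t=T$, which cannot reach such $\phi$.

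Your one-sided weight does not repair this. With $\eta(T)>0$, the Gramian sends smooth adjoint data to final states with $\partial_\nu u(T)=\eta(T)\,w(T)|_{\partial M}\neq 0$. So the invariance ``$\mathcal{G}_\eta$ maps $D(\mathfrak{A}^k)$ onto itself'' in Step 3 is false. The commutator argument also picks up boundary terms at $t=T$ that nothing in \cite{ervedoza2010systematic} controls.

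Two smaller points. For Neumann control the HUM control is $\eta$ times the Dirichlet trace $w|_{\partial M}$ of the Neumann adjoint solution, not a $\partial_\nu$-trace, which vanishes. Also, an $L^2$ Dirichlet trace observes a norm one derivative below the energy space, not the energy itself.

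The missing step is a reduction to compatible data. One standard way uses the same device as Lemma~\ref{thm:ourapproxcontrol}:
\begin{enumerate}
\item Enlarge $M$ to $\tilde M$, with $M$ in the interior of $\tilde M$ and $\tilde M$ close enough to $M$ that exact controllability at time $T$ persists. This is where the strict inequality $T>\tau_{max}$ is used.
\item Extend $\phi$ to $\tilde\phi\in C^\infty_c(\tilde M^\circ)$ with a bounded extension operator. Then $(\tilde\phi,0)$ lies in every $D(\tilde{\mathfrak{A}}^k)$.
\item Apply Ervedoza--Zuazua with the usual two-sided weight on $\tilde M$. This gives $\tilde f\in C^\infty_c((0,T)\times\partial\tilde M)$ with $\tilde u(T)=\tilde\phi$, $\partial_t\tilde u(T)=0$, and quantitative bounds.
\item Set $f:=\partial_\nu\tilde u|_{(0,T]\times\partial M}$. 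It is smooth up to $t=T$ and vanishes near $t=0$ by finite speed of propagation. Moreover $u^f=\tilde u|_M$, so $u^f(T)=\phi$.
\end{enumerate}
The estimate $\|f\|_{H^2}\le C\|\phi\|_{H^4}$ then comes from three ingredients: the Ervedoza--Zuazua bound, energy estimates for $\tilde u$, and the trace theorem on the interior hypersurface $\partial M\subset\tilde M$. The trace step loses derivatives, which is why a higher norm of $\phi$ appears on the right.
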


Note that $T>\tau_{max}$ for some finite $T>0$ imposes further geometric constraints on $M$. In particular, $(M, c^{-2}g)$ is non-trapping. Using Lemma~\ref{thm:exactcontrol}, we show that the range of $A_\lambda$ is dense in $L^{2}(M)\times L^{2}(\partial M)$.

\begin{prop} \label{thm:L2density}
Let $T> \tau_{max}$ and $\lambda\in\mathbb{C}$ be a non-Neumann eigenvalue of $\mf{P}$. Then the range of 
$$
A_\lambda: L^2((0,T)\times\partial M) \to L^{2}(M)\times L^{2}(\partial M)
$$
is dense in $L^{2}(M)\times L^{2}(\partial M)$.
\end{prop}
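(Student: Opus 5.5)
We follow the proof of Proposition~\ref{thm:denserange}, but replace approximate controllability by the exact controllability of Lemma~\ref{thm:exactcontrol}. Since $C^\infty(M)\times C^\infty(\partial M)$ is dense in $L^2(M)\times L^2(\partial M)$, and the closure of the range of $A_\lambda$ is a closed subspace, it suffices to show that every smooth pair $(\mf{v},\mf{f})\in C^\infty(M)\times C^\infty(\partial M)$ lies exactly in the range of $A_\lambda$.

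Fix such a pair and let $\phi\in H^1(M)$ be the weak solution of
$$
(\mf{P}-\lambda)\phi = -c\,\mf{v} \quad\text{in } M, \qquad \partial_\nu\phi|_{\partial M} = \mf{f}.
$$
This solution exists and is unique because $\lambda$ is not a Neumann eigenvalue. The right-hand side $-c\mf{v}$ and the Neumann datum $\mf{f}$ are smooth, and the coefficients of $\mf{P}$ are smooth. Elliptic regularity for the Neumann problem, bootstrapped, therefore gives $\phi\in C^\infty(M)$. This regularity step is what allows us to invoke Lemma~\ref{thm:exactcontrol}, which only treats smooth targets. By that lemma, since $T>\tau_{max}$, there is $f\in C_c^\infty((0,T]\times\partial M)$ with $u^f(T)=\phi$.

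Set $\ddot f:=\partial_t^2 f\in L^2((0,T)\times\partial M)$. Because $f$ vanishes near $t=0$, we have $\partial_t^{-2}\ddot f=f$. Following Remark~\ref{thm:extensionremark}, we use a smooth extension of $f$ past $T$, so that $\partial_t^2u^f = u^{\partial_t^2 f}$ holds up to $t=T$. Using the wave equation at time $T$,
$$
c^{-1}W(1+\lambda\partial_t^{-2})\ddot f = c^{-1}\bigl(\partial_t^2u^f(T)+\lambda u^f(T)\bigr) = -c^{-1}(\mf{P}-\lambda)\phi = \mf{v}.
$$
For the boundary component, the Neumann condition $\partial_\nu u^f = f$ holds up to $t=T$ because $f$ and $u^f$ are smooth there, so
$$
S\ddot f = f(T) = \partial_\nu u^f(T) = \partial_\nu\phi = \mf{f}.
$$
Hence $A_\lambda\ddot f=(\mf{v},\mf{f})$. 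Smooth pairs therefore lie in the range of $A_\lambda$, and the range is dense in $L^2(M)\times L^2(\partial M)$.

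The main point needing care is the regularity. The target $\phi$ must be smooth up to the boundary for Lemma~\ref{thm:exactcontrol} to apply, which holds by standard elliptic theory. The identities $\partial_t^2u^f(T)=-\mf{P}u^f(T)$ and $\partial_\nu u^f(T)=f(T)$ must hold pointwise at $t=T$, not merely in the weak $H^{-1}\times H^{-1/2}$ sense used in Proposition~\ref{thm:denserange}. This follows from the smoothness of $f$ up to $t=T$, which gives a smooth solution $u^f$ on $[0,T]\times M$ by the compatibility (zero near $t=0$) and standard regularity theory for the Neumann wave problem.
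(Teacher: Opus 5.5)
Your proposal is correct and follows essentially the same argument as the paper. Both solve the elliptic Neumann problem with smooth data, use elliptic regularity and Lemma~\ref{thm:exactcontrol} to hit $\phi$ exactly, and check that $A_\lambda\ddot f=(\mf{v},\mf{f})$, so the range contains the dense set $C^\infty(M)\times C^\infty(\partial M)$. Your added justification that the identities hold pointwise at $t=T$, via a smooth extension of $f$ past $T$, makes explicit a step the paper leaves implicit.
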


\begin{proof}
For any $\mf{v}\in C^{\infty}(M)$ and any $\mf{f} \in C^{\infty}(\partial M)$, let $\phi_\lambda$ be the solution of the following elliptic BVP
$$
(\mf{P} - \lambda) \phi_\lambda = - c \mf{v} \quad \text{ in } M, \qquad
\partial_\nu \phi_\lambda|_{\partial M} = \mf{f}.
$$
Then $\phi_\lambda \in C^\infty(M)$ by the elliptic regularity. If $T> \tau_{max}$, by Lemma~\ref{thm:exactcontrol}, there exists $f\in C^\infty_c((0,T]\times\partial M)$ such that $u^f(T) = \phi_\lambda$. Hence
\begin{align*}
c^{-1} W (1+\lambda \partial^{-2}_t) \ddot{f} & = c^{-1} \left( u^{\partial^2_t f}(T) + \lambda  W f \right) = c^{-1} \left( \partial^2_t u^{f}(T) + \lambda u^{f}(T) \right) \\ 
& = - c^{-1} \left( \mf{P}u^{f}(T) - \lambda u^{f}(T) \right) = - c^{-1} (\mf{P}-\lambda) \phi_\lambda = \mf{v} 
\end{align*}
and
$$
S\ddot{f} = f(T) = \partial_\nu u^{f}(T) = \partial_\nu \phi_\lambda = \mf{f}. 
$$
This shows that the range of $A$ contains the set $C^{\infty}(M)\times C^{\infty}(\partial M)$, which is dense in $L^{2}(M)\times L^{2}(\partial M)$.
\end{proof}

\bigskip
\section{Reconstruction Procedure and Formula}
\label{sec:reconprocedure}

Recall the following standard result for the Tikhonov regularization (e.g, see \cite[Lemma 1]{oksanen2013solving}).
\begin{lemma}\label{thm:projection}
Let $A:X\rightarrow Y$ be a bounded linear operator between two Hilbert spaces $X$ and $Y$. For any $y\in Y$, let $\alpha>0$ be a constant and $x_\alpha := (A^\ast A + \alpha)^{-1} A^\ast y$. Then 
$$
A x_\alpha \rightarrow P_{\overline{Ran(A)}}y \quad\quad \text{ as } \alpha\rightarrow 0+
$$
where $P_{\overline{Ran(A)}}y$ denotes the orthogonal projection of $y$ onto the closure of the range of $A$.
\end{lemma}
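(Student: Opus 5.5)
The plan is to prove Lemma~\ref{thm:projection} by diagonalizing the regularized operator through the spectral theorem for the self-adjoint operator $AA^\ast$ on $Y$. First I would record the commutation identity
$$(A^\ast A+\alpha)^{-1}A^\ast = A^\ast(AA^\ast+\alpha)^{-1},$$
which follows from $A^\ast(AA^\ast+\alpha)=(A^\ast A+\alpha)A^\ast$ together with the invertibility of both $A^\ast A+\alpha$ and $AA^\ast+\alpha$ for $\alpha>0$. These operators are positive plus $\alpha$, so each has an inverse bounded by $\alpha^{-1}$. Consequently
$$Ax_\alpha = AA^\ast(AA^\ast+\alpha)^{-1}y = g_\alpha(AA^\ast)\,y, \qquad g_\alpha(s):=\frac{s}{s+\alpha}.$$

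Next I would decompose $Y=\overline{Ran(A)}\oplus \ker A^\ast$. On $\ker A^\ast$ we have $A^\ast y=0$, so $x_\alpha=0$ and hence $Ax_\alpha=0$, which is the correct projection. It therefore remains to treat $y\in\overline{Ran(A)}$ and show that $Ax_\alpha\to y$.

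For this step, write $B=AA^\ast\ge 0$ with spectral measure $E$ supported in $[0,\|A\|^2]$. Then
$$\|g_\alpha(B)y-y\|^2=\int \Big(\frac{\alpha}{s+\alpha}\Big)^2\,d\langle E(s)y,y\rangle.$$
The integrand is bounded by $1$ and tends pointwise to $\mathbf 1_{\{0\}}(s)$ as $\alpha\to0+$. By dominated convergence the integral tends to $\|E(\{0\})y\|^2$. Since $E(\{0\})$ is the projection onto $\ker B=\ker A^\ast$, which is orthogonal to $y$, this limit vanishes.

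The main point needing care is this last identification. One must show $\ker(AA^\ast)=\ker A^\ast$, which follows from $\langle AA^\ast y,y\rangle=\|A^\ast y\|^2$, and use $\ker A^\ast=Ran(A)^\perp$. If one prefers to avoid spectral measures, an alternative route works as well. The bound $\|g_\alpha(B)-I\|\le1$ holds uniformly, and for $y=Ax$ with $x\in Ran(A^\ast)$, i.e. $y=Bz$, one has the direct estimate
$$\|(g_\alpha(B)-I)Bz\|=\alpha\|B(B+\alpha)^{-1}z\|\le\alpha\|z\|\to0.$$
Since $Ran(B)$ is dense in $\overline{Ran(A)}$ (its orthogonal complement is $\ker B=\ker A^\ast$), a standard $\epsilon/3$ argument combining this estimate with the uniform bound finishes the proof.
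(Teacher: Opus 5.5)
Your proof is correct, and both of your variants (the spectral-measure argument and the measure-free density argument) are complete. The paper gives no proof of its own; it quotes the lemma as a standard Tikhonov fact from the cited reference. The usual proof of that fact is variational rather than spectral. One uses that $x_\alpha$ is the unique minimizer of $\|Ax-y\|^2+\alpha\|x\|^2$. Writing $P=P_{\overline{Ran(A)}}$, the orthogonality $y-Py\perp Ran(A)$ gives $\|Ax-y\|^2=\|Ax-Py\|^2+\|y-Py\|^2$ for every $x\in X$. Comparing $x_\alpha$ with any $x$ satisfying $\|Ax-Py\|\le\epsilon$ yields $\|Ax_\alpha-Py\|^2\le\epsilon^2+\alpha\|x\|^2$, and one lets $\alpha\to0+$.

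That argument needs only the minimization property and the definition of the closure, with no functional calculus. It also ties directly to the least-squares motivation \eqref{eq:keyopt} behind the paper's construction. Your route instead produces the exact error identity $Ax_\alpha-Py=-\alpha(AA^*+\alpha)^{-1}Py$, which gives more: the error is bounded by $\|Py\|$, it is monotone in $\alpha$, and it decays at an explicit rate under a source condition. Your estimate $\alpha\|z\|$ when $Py=AA^*z$ is the prototype. This is the kind of information one would need to quantify the bias at the fixed $\alpha_0$ of Section~\ref{sec:stability}. Your second variant is essentially as elementary as the variational proof, since $\|\alpha(B+\alpha)^{-1}\|\le1$ and $\|B(B+\alpha)^{-1}\|\le1$ follow directly from $B\ge0$. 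The only cosmetic point is that dominated convergence should be applied along sequences $\alpha_k\to0+$, which suffices for the continuous limit.
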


Using this lemma, Proposition~\ref{thm:denserange} and Proposition~\ref{thm:L2density}, we can derive the following reconstruction procedure to obtain elliptic boundary data from the hyperbolic ND map $\Lambda$. Here, $A^*$ is the adjoint of $A$ equipped with suitable co-domains.

\begin{thm} \label{thm:ReconProcedure}
Let $\lambda$ be a non-Neumann eigenvalue of $\mf{P}$ and $f_{\lambda,\alpha}$ be a function with
$$
\ddot{f}_{\lambda,\alpha} := (A^*_\lambda A_\lambda + \alpha)^{-1} A^*_\lambda \left(
\begin{array}{c}
     0 \\
     \mf{f}
\end{array}
\right).
$$

\begin{enumerate}
    \item[(i)] If $T>2T^*$, then for any $\mf{f}\in H^{-\frac{1}{2}}(\partial M)$, we have
$$
\Lambda f_{\lambda,\alpha}(T) \rightarrow \mf{L}_\lambda \mf{f} \ \text{ in } H^{\frac{1}{2}}(\partial M) \quad \text{ as } \alpha\rightarrow 0+;
$$
    \item[(ii)] If $T>\tau_{\max}$,  then for any $\mf{f}\in L^{2}(\partial M)$, we have
$$
\Lambda f_{\lambda,\alpha}(T) \rightarrow \mf{L}_\lambda \mf{f} \ \text{ in } H^{1}(\partial M) \quad \text{ as } \alpha\rightarrow 0+.
$$
\end{enumerate}
\end{thm}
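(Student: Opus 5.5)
The plan is to combine Lemma~\ref{thm:projection} with the density results, and then transfer the resulting convergence from $M$ to the boundary trace by elliptic regularity. Set $f_{\lambda,\alpha}:=\partial^{-2}_t\ddot f_{\lambda,\alpha}$ and $u_\alpha:=u^{f_{\lambda,\alpha}}(T)=W\partial^{-2}_t\ddot f_{\lambda,\alpha}$.

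\textbf{Step 1: convergence of $A_\lambda\ddot f_{\lambda,\alpha}$.} In case (i), regard $A_\lambda$ as a map into $Y=H^{-1}(M)\times H^{-\frac12}(\partial M)$. The adjoint $A^*_\lambda$ is taken with respect to the Hilbert structure of $Y$, and $(0,\mf{f})\in Y$. By Proposition~\ref{thm:denserange} the range is dense, so Lemma~\ref{thm:projection} gives
\[
A_\lambda\ddot f_{\lambda,\alpha}\to(0,\mf{f}) \quad\text{in } Y.
\]
In case (ii), take $Y=L^2(M)\times L^2(\partial M)$. Proposition~\ref{thm:L2density} and Lemma~\ref{thm:projection} give the same convergence in the $L^2$ topology.

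\textbf{Step 2: identify $u_\alpha$ as the solution of an elliptic BVP.} Using Remark~\ref{thm:extensionremark}, the first component of $A_\lambda\ddot f_{\lambda,\alpha}$ equals $c^{-1}(\partial_t^2u^{f_{\lambda,\alpha}}(T)+\lambda u_\alpha)=-c^{-1}(\mf P-\lambda)u_\alpha$. The second component equals $f_{\lambda,\alpha}(T)=\partial_\nu u_\alpha$. Hence $u_\alpha$ is the solution of
\[
(\mf P-\lambda)u_\alpha=-c\,\mf v_\alpha,\qquad \partial_\nu u_\alpha=\mf f_\alpha,
\]
where $(\mf v_\alpha,\mf f_\alpha)\to(0,\mf f)$ in $Y$. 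Justifying these identities for merely $L^2$ controls requires some care. I would first establish them for smooth $\ddot f$, where they are the computations already used in Propositions~\ref{thm:denserange} and~\ref{thm:L2density}. I would then pass to general $\ddot f$ by density, interpreting them weakly. Continuity of $W$, of $S$, and of the solution map of the wave equation (Lemma~\ref{thm:operators}) makes the weak formulation
\[
\bigl(\nabla u,\nabla\varphi\bigr)+\dots=\bigl(\text{data},\varphi\bigr)
\]
stable under this limit.

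\textbf{Step 3: elliptic well-posedness and trace.} Since $\lambda$ is not a Neumann eigenvalue, the solution map $(\mf v,\mf f)\mapsto u$ of the Neumann problem is bounded from $H^{-1}(M)\times H^{-\frac12}(\partial M)$ (more precisely, from $(H^1)'$ data) to $H^1(M)$. It is also bounded from $L^2(M)\times L^2(\partial M)$ to $H^{3/2}(M)$. The solution with data $(0,\mf f)$ is $\mf u^{\mf f}_\lambda$. Consequently $u_\alpha\to\mf u^{\mf f}_\lambda$ in $H^1(M)$ in case (i), and in $H^{3/2}(M)$ in case (ii). Taking traces gives
\[
\Lambda f_{\lambda,\alpha}(T)=u_\alpha|_{\partial M}\to\mf L_\lambda\mf f
\]
in $H^{\frac12}(\partial M)$ in case (i), and in $H^1(\partial M)$ in case (ii).

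\textbf{Main obstacle.} I expect the main difficulty to lie in Step 2 and in the exact Sobolev bookkeeping of Step 3. The identities hold classically only for smooth controls, while $\ddot f_{\lambda,\alpha}$ is only in $L^2$. In addition, the $H^{-1}(M)$ interior datum must be paired correctly with the boundary term in the weak formulation: one has to check whether $H^{-1}$ here means the dual of $H^1_0$ or of $H^1$, since the Neumann problem requires the latter. I would handle this by working throughout with the weak form against $H^1(M)$ test functions and approximating $\ddot f_{\lambda,\alpha}$ by smooth compactly supported controls.
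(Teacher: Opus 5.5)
Your proposal is correct and takes essentially the same route as the paper. Lemma~\ref{thm:projection} together with Proposition~\ref{thm:denserange} (resp.\ Proposition~\ref{thm:L2density}) gives $A_\lambda\ddot f_{\lambda,\alpha}\to(0,\mf{f})$ in $Y$; the two components are $-c^{-1}(\mf{P}-\lambda)u_\alpha$ and $\partial_\nu u_\alpha$; and elliptic well-posedness at a non-eigenvalue plus the trace gives convergence in $H^{\frac12}(\partial M)$ (resp.\ $H^1(\partial M)$). Your extra care---justifying these identities for $L^2$ controls by density, and reading $H^{-1}(M)$ as the dual of $H^1(M)$ so the Neumann problem is well posed---covers points the paper skips, and your limit condition $\partial_\nu\mf{u}_\lambda=\mf{f}$ is correct where the paper's proof has a typo writing a Dirichlet condition.
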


\begin{proof}
(i) In Lemma~\ref{thm:projection}, take $A=A_\lambda$, $X=L^2((0,T)\times\partial M)$ and $Y=H^{-1}(M) \times H^{-\frac{1}{2}}(\partial M)$. Note that $\overline{Ran(A)} = Y$ by Proposition~\ref{thm:denserange}. For any $(\mf{v}, \mf{f}) \in Y$, Lemma~\ref{thm:projection} asserts that if we define 
$$
\ddot{f}_{\lambda,\alpha} := (A^*_\lambda A_\lambda + \alpha)^{-1} A^*_\lambda \left(
\begin{array}{c}
     \mf{v} \\
     \mf{f}
\end{array}
\right) \quad \in X,
$$
then
$$
A_\lambda \ddot{f}_{\lambda,\alpha} = \left(
\begin{array}{c}
     c^{-1} W (1 + \lambda  \partial^{-2}_t)  \\
     S 
\end{array}
\right) \ddot{f}_{\lambda,\alpha} 
\rightarrow 
\left(
\begin{array}{c}
     \mf{v} \\
     \mf{f}
\end{array}
\right)
\quad \text{ in } Y \text{ as } \alpha \rightarrow 0+.
$$
In particular, choosing $\mf{v}=0$ gives
\begin{align*}
    c^{-1} W (1+\lambda \partial^{-2}_t) \ddot{f}_{\lambda,\alpha} & = c^{-1} \left( u^{\partial^2_t {f}_{\lambda,\alpha}}(T) + \lambda u^{f_{\lambda,\alpha}}(T) \right) = - c^{-1} (\mf{P}-\lambda) u^{{f}_{\lambda,\alpha}}(T) \rightarrow 0 \quad \text{ in } H^{-1}(M), \\
    S \ddot{f}_{\lambda,\alpha} & = f_{\lambda,\alpha}(T) = \partial_\nu u^{{f}_{\lambda,\alpha}}(T) \rightarrow \mf{f} \quad \text{ in } H^{-\frac{1}{2}}(\partial M)
\end{align*}
as $\alpha\rightarrow 0+$. As $c^{-1}>0$ is bounded from above and below, we conclude $u^{f_{\lambda,\alpha}}(T) \rightarrow \mf{u}_\lambda$ in $H^1(M)$ by the standard elliptic regularity estimate, where $\mf{u}_\lambda$ is the weak solution of the elliptic boundary value problem
$$
(\mf{P}-\lambda) \mf{u}_\lambda = 0 \quad \text{ in } M, \qquad \mf{u}_\lambda|_{\partial M} = \mf{f}.
$$
Restricting to $\partial M$ gives
$$
\Lambda f_{\lambda,\alpha}(T) = u^{f_{\lambda,\alpha}}(T)|_{\partial M} \rightarrow \mf{u}_\lambda|_{\partial M} = \mf{L}_\lambda \mf{f} \quad \text{ in } H^{\frac{1}{2}}(\partial M) \text{ as } \alpha\rightarrow 0+. 
$$

(ii) The proof goes the same way. This time, take $A=A_\lambda$, $X=L^2((0,T)\times\partial M)$ and $Y=L^{2}(M) \times L^{2}(\partial M)$ in Lemma~\ref{thm:projection}. We still have $\overline{Ran(A)} = Y$ by Proposition~\ref{thm:L2density}.
The convergence $A_\lambda \ddot{f}_{\lambda,\alpha}\rightarrow (0,\mf{f})^\top$ as $\alpha\to 0+$ along with the elliptic regularity implies that $u^{f_{\lambda,\alpha}}(T) \rightarrow \mf{u}_\lambda$ in $H^\frac{3}{2}(M)$. Restricting to $\partial M$ gives $\Lambda f_{\lambda,\alpha}(T) \rightarrow \mf{L}_\lambda \mf{f}$ in $H^{1}(\partial M)$ as $\alpha\rightarrow 0+$.

\end{proof}

Theorem~\ref{thm:ReconProcedure} suggests a path toward constructing the elliptic ND map $\mf{L}_\lambda$, as long as we can find $f_{\lambda,\alpha}$ from the hyperbolic ND map $\Lambda$ such that
$$
    (A_\lambda^*A_\lambda + \alpha I) \ddot{f}_{\lambda,\alpha} = A_\lambda^*
\left(
\begin{array}{c}
     0 \\
     \mf{f} 
\end{array}
\right).
$$
Using the definition of $A_\lambda$ (see~\eqref{eq:Alambda}), this linear equation becomes
\begin{equation} \label{eq:TikUsingWS}
[(1+\lambda \partial^{-2}_t)^* W^* c^{-2} W (1+\lambda \partial^{-2}_t) + S^* S + \alpha I] \ddot{f}_{\lambda,\alpha} = S^* \mf{f}.
\end{equation}
The rest of this section aims to explicitly represent the operators in this linear equation using the hyperbolic ND map $\Lambda$. From now on, we view $A_\lambda$ as the continuous linear operator
$$
A_\lambda: L^2((0,T)\times\partial M) \rightarrow L^2(M) \times L^2(\partial M)
$$
so that $^*$ denotes the usual $L^2$-adjoint and $\mf{f}\in L^2(\partial M)$. As the adjoint operators of $\partial^2_t$ and $S$ have been obtained in Lemma~\ref{thm:CalculatingAdjoint}, it remains to compute $W^* c^{-2} W$.

\medskip
We introduce a few more operators before continuing with the analysis. Define the time reversal operator
$$
R: L^2((0,T)\times\partial M) \rightarrow L^2((0,T)\times\partial M), \qquad Rf(t,\cdot):=f(T-t,\cdot) ,\quad 0<t<T
$$
and the low-pass filter 
$$
J: L^2((0,2T)\times\partial M) \rightarrow L^2((0,T)\times\partial M), \qquad Jf(t,\cdot):=\frac{1}{2}\int^{2T-t}_t f(\tau,\cdot) \,d\tau,\quad 0<t<T.
$$
The continuity of $R,J$ are established in the proof of Lemma~\ref{thm:Kstability}.
Moreover, we introduce
$$
\Lambda_T:=P_T \Lambda P^*_T, 
$$
which is the restriction of the hyperbolic ND map $\Lambda$ (originally defined on $L^2((0,2T)\times\partial M)$) onto the subspace $L^2((0,T)\times\partial M)$. The adjoint of $\Lambda_T$ is $\Lambda^\ast_{T} = R \Lambda_{T} R$, as can be easily verified.

With these ingredients, we define a linear operator:
\begin{equation} \label{eq:K}
K: L^2((0,T)\times\partial M) \rightarrow L^2((0,T)\times\partial M), \qquad K = K(\Lambda):= J \Lambda P^\ast_T - R \Lambda_{T} R J P^\ast_T.
\end{equation}
As the composition of continuous operators, $K$ is continuous.
The next lemma, known as the Blagove\u{s}\u{c}enski\u{ı}’s identity~\cite{blagoveshchenskii1967inverse}, shows that $K=W^* c^{-2} W$.

\begin{lemma} \label{thm:CalculatingNormal}
$K=W^* c^{-2} W$ as continuous linear operators on $L^2((0,T)\times\partial M)$.
\end{lemma}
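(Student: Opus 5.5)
The plan is the classical Blagoveščenskiĭ argument. By density and continuity (Lemma~\ref{thm:operators} gives continuity of $W$, $\Lambda$, $J$, $R$, $P_T$, $P^*_T$), it suffices to prove $(W^*c^{-2}Wf,h)_{(0,T)\times\partial M} = (Kf,h)_{(0,T)\times\partial M}$ for $f,h\in C^\infty_c((0,T)\times\partial M)$.

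First I fix such $f,h$, extend them by zero to $(0,2T)$, and set
\[
w(t,s):=(u^f(t),u^h(s))_{c^{-2},M},\qquad t,s\in[0,2T].
\]
Then $(W^*c^{-2}Wf,h)=w(T,T)$. Since $c^{-2}\mathfrak P = -\Delta_g + c^{-2}q$ is symmetric with respect to $dx$ up to boundary terms, Green's formula gives
\[
(\partial_t^2-\partial_s^2)w(t,s) = -(c^{-2}\mathfrak P u^f(t),u^h(s))_M + (u^f(t),c^{-2}\mathfrak P u^h(s))_M .
\]
The interior terms cancel, and what remains is
\[
(\partial_t^2-\partial_s^2)w(t,s)=\bigl(u^f(t),\partial_\nu u^h(s)\bigr)_{\partial M}-\bigl(\partial_\nu u^f(t),u^h(s)\bigr)_{\partial M}=(\Lambda f(t),h(s))_{\partial M}-(f(t),\Lambda h(s))_{\partial M}=:F(t,s).
\]
Here I must check the boundary terms carefully: the normal derivative in $\Delta_g$ carries no factor $c$, so the $c^{-2}$ weight is exactly what makes the Neumann data appear cleanly. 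The initial conditions are $w(0,s)=\partial_tw(0,s)=0$.

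Next I solve this 1D wave equation in $(t,s)$ by d'Alembert/Duhamel and evaluate at $t=s=T$:
\[
w(T,T)=\tfrac12\int_0^T\int_{T-(T-\sigma)}^{T+(T-\sigma)}F(\sigma,\eta)\,d\eta\,d\sigma,
\]
that is, integration over the backward characteristic triangle with vertex $(T,T)$ and base $\sigma\in(0,T)$, $\eta\in(\sigma,2T-\sigma)$. Swapping roles, i.e.\ integrating in $t$ with $s$ as the time-like variable, gives the form matching $J$: the $\eta$-integral $\tfrac12\int_\sigma^{2T-\sigma}$ is exactly $J$. The first term of $F$ then yields $(J\Lambda P_T^*f,h)$ after relabeling, with $\Lambda f$ needed on $(0,2T)$, which explains the $J\Lambda P^*_T$ term. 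For the second term, $h$ is supported in $(0,T)$, so the $\eta$-integral of $\Lambda h(\eta)$ against $f(\sigma)$ rearranges via adjoints into $(R\Lambda_TRJP^*_Tf,h)$, using $\Lambda_T^*=R\Lambda_TR$ and causality ($\Lambda h$ on $(0,T)$ depends only on $h|_{(0,T)}$). Throughout I keep track of which variable carries $f$ and which carries $h$, possibly symmetrizing $w$.

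The main obstacle is this bookkeeping. I have to get the orientation of the triangle and the roles of $t$ and $s$ right so that the expression becomes exactly $J\Lambda P_T^*-R\Lambda_TRJP_T^*$ rather than a transposed variant, and I have to justify the Green formula at the $C^\infty$ level, since the solutions are smooth for smooth compatible data. If the direct rearrangement fails, I will instead verify the identity by computing $(Kf,h)$ explicitly with Fubini and comparing it with the Duhamel integral term by term.
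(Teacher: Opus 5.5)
Your approach is the paper's: the same $w(t,s)$, the 1D wave equation in $(t,s)$, Duhamel over the triangle with vertex $(T,T)$, and $\Lambda_T^*=R\Lambda_TR$. However, the Green's formula step has the wrong sign, and carried through consistently it gives the opposite of the claim.

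With the outward normal, $(\Delta_g u,v)_M-(u,\Delta_g v)_M=(\partial_\nu u,v)_{\partial M}-(u,\partial_\nu v)_{\partial M}$. This is the convention under which the stated formula for $K$ holds, and the one the paper's proof uses. Hence
\[
(\partial_t^2-\partial_s^2)w(t,s)=(f(t),\Lambda h(s))_{\partial M}-(\Lambda f(t),h(s))_{\partial M}=-F(t,s).
\]
Suppose you insert your displayed equation $(\partial_t^2-\partial_s^2)w=F$ into your own Duhamel formula. You get $w(T,T)=(\Lambda_T f,JP_T^*h)-(f,J\Lambda P_T^*h)=-(f,Kh)$. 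If you swap roles consistently, the source for $(\partial_s^2-\partial_t^2)w$ becomes $-F$, and you get $w(T,T)=-(Kf,h)$. Either way you would conclude $W^*c^{-2}W=-K$. Your claim that the first term of $F$ ``yields $(J\Lambda P_T^*f,h)$'' is right only because the sign change $(\partial_s^2-\partial_t^2)w=-(\partial_t^2-\partial_s^2)w$ is silently dropped when you swap variables. So two sign errors cancel, and this is exactly the bookkeeping you flagged as the main obstacle.

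The repair is immediate. With the correct Green identity, your $F$ is exactly $(\partial_s^2-\partial_t^2)w$. Also $u^h(0)=\partial_s u^h(0)=0$ gives $w(t,0)=\partial_s w(t,0)=0$; you need these $s$-initial conditions for the swap, and you stated only the $t$-ones. Duhamel in $s$ then yields
\[
w(T,T)=\frac12\int_0^T\int_\sigma^{2T-\sigma}F(\eta,\sigma)\,d\eta\,d\sigma=(J\Lambda P_T^*f,h)-(JP_T^*f,\Lambda_T h)=(Kf,h).
\]
This uses $(JP_T^*f,\Lambda_Th)=(\Lambda_T^*JP_T^*f,h)$ and $\Lambda_T^*=R\Lambda_TR$. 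No separate causality argument is needed, since $h=P_T^*h$ already.

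The paper instead keeps $t$ as the time-like variable, obtains $w(T,T)=(f,Kh)$, and compares with $(f,W^*c^{-2}Wh)$. The two orientations are equivalent.

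A minor point: continuity of $J$ and $R$ is not part of Lemma~\ref{thm:operators}. It is elementary, and the paper checks it in the proof of Lemma~\ref{thm:Kstability}.
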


\begin{proof}
As both operators $K$ and $W^* c^{-2} W$ are continuous, it remains to verify that they agree on the dense subset of compactly supported smooth functions. 

For any $f,h\in C^\infty_c ((0,T)\times\partial M)$. Let $u^{f}, u^{h}$ be the (smooth) solutions of \eqref{eq:ibvp} with the Neumann data $f, h$, respectively. Define 
$$
I(t,s) := (u^{f}(t), u^{h}(s))_{c^{-2},M},
$$
then $(Wf, Wh)_{c^{-2},M} = I(T,T)$. We compute
\begin{align*}
 (\partial^2_t - \partial^2_s) I(t,s) & = ((c^2 \Delta_g-q ) u^{f}(t), u^{h}(s))_{c^{-2},M} - (u^{f}(t),(c^2 \Delta_g-q ) u^{h}(s))_{c^{-2},M} \nonumber \\
 & = (\Delta_g u^{f}(t), u^{h}(s))_{M} - (u^{f}(t), \Delta_g u^{h}(s))_{M} \\
 & = (f(t), \Lambda h(s))_{\partial M} - (\Lambda f(t), h(s))_{\partial M},
\end{align*}
where the last equality follows from the Green's identity.
On the other hand, $I(0,s)=\partial_t I(0,s) = 0$ since $u^{f}(0,x)=\partial_t u^{f}(0,x) = 0$. 
Using the relations $f=P^*_T f$ and $h=P^*_T h$, we solve this 1D inhomogeneous wave equation along with the initial conditions to obtain 
\begin{align*}
I(T,T) & = \frac{1}{2} \int^T_0 \int^{2T-t}_{t} \left[ (P^*_T f(t), \Lambda P^*_T h(\sigma))_{\partial M} - (\Lambda P^*_T f(t), P^*_T h(\sigma))_{\partial M} \right] \,d\sigma dt \vspace{1ex}\\
 & = \int^T_0  \left[ (P^*_T f(t) ,  \frac{1}{2}\int^{2T-t}_{t} \Lambda P^*_T h(\sigma) \,d\sigma)_{\partial M} - ( \Lambda P^*_T f(t),  \frac{1}{2}\int^{2T-t}_{t} P^*_T h(\sigma) \,d\sigma)_{\partial M} \right] \,dt \vspace{1ex} \\
 & = (P^*_T f,J \Lambda P^\ast_T h )_{(0,T)\times\partial M} - ( \Lambda P^*_T f, J P^*_T h)_{(0,T)\times\partial M} \\
 & = (f,J \Lambda P^\ast_T h )_{(0,T)\times\partial M} - ( P_T \Lambda P^*_T f, J P^*_T h)_{(0,T)\times\partial M} \\
 & = (f,J \Lambda P^\ast_T h )_{(0,T)\times\partial M} - ( \Lambda_T f, J P^*_T h)_{(0,T)\times\partial M} \\
 & = (f,J \Lambda P^\ast_T h )_{(0,T)\times\partial M} - ( f, R \Lambda_T R J P^*_T h)_{(0,T)\times\partial M} 
\end{align*}
where the last equality holds since $\Lambda^*_T = R \Lambda_T R$. We thus have shown
$$
(Wf, Wh)_{c^{-2},M} = I(T,T) = (f,Kh)_{(0,T)\times\partial M}
$$
for any $f,h\in C^\infty_c ((0,T)\times\partial M)$. 
As $(Wf, Wh)_{c^{-2},M} = (f,W^* c^{-2} Wh)_{(0,T)\times\partial M}$, the proof is complete.
\end{proof}

\begin{remark} \label{thm:SstarJ}
$J$ and $S^*$ agree on the subspace $L^2(M)$. Indeed, for any $\mf{f} \in L^2(\partial M)$, we have
    $$
    S^* \mf{f} = (T-t) \mf{f} = \frac{1}{2} \int^{2T-t}_t \mf{f} \,dt = J \mf{f}.
    $$
\end{remark}

The following commutivity is an intermediate step in establishing the reconstruction formula.

\begin{lemma} \label{thm:commutivity}
$\Lambda_T \partial^{-2}_t = \partial^{-2}_t \Lambda_T$ on $L^2((0,T)\times\partial M)$.    
\end{lemma}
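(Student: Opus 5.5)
The proof rests on time-translation invariance of the wave equation, which is a consequence of the coefficients $c,g,q$ not depending on $t$, together with causality. Since both $\Lambda_T\partial^{-2}_t$ and $\partial^{-2}_t\Lambda_T$ are continuous on $L^2((0,T)\times\partial M)$ by Lemma~\ref{thm:operators}, I will only verify the identity on the dense subspace $C^\infty_c((0,T)\times\partial M)$.

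Fix $f\in C^\infty_c((0,T)\times\partial M)$ and set $h:=\partial^{-2}_t f$. Then $h$ is smooth on $[0,T]\times\partial M$ and vanishes near $t=0$, but it need not vanish near $t=T$. Remark~\ref{thm:extensionremark} says that $u^h$ on $(0,T)\times M$ does not depend on how $h$ is extended past $T$, so I extend $h$ smoothly to $(0,2T)$ and keep it vanishing near $t=0$. The central step is to show that
$$
\partial^{-2}_t u^f = u^h \quad \text{on } (0,T)\times M .
$$
Define $w(t,x):=\int_0^t\int_0^\tau u^f(s,x)\,ds\,d\tau$. I check three things.
\begin{enumerate}
\item[(a)] Initial data: $w(0)=\partial_t w(0)=0$.
\item[(b)] Boundary data: $\partial_\nu w=\partial^{-2}_t f = h$ on $(0,T)\times\partial M$, because $\partial_\nu$ commutes with time integration.
\item[(c)] The equation: $\partial_t^2 w+\mf{P}w = u^f+\partial^{-2}_t(\mf{P}u^f)=u^f-\partial^{-2}_t\partial_t^2u^f$. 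Two integrations in time and the zero initial data $u^f(0)=\partial_tu^f(0)=0$ give $\partial^{-2}_t\partial^2_t u^f=u^f$, so this vanishes.
\end{enumerate}
Uniqueness for the initial boundary value problem \eqref{eq:ibvp} then gives $w=u^h$ on $(0,T)\times M$.

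Next I take traces on $(0,T)\times\partial M$. Because $P^*_T$ extends by zero, $u^{P^*_Tf}=u^f$ on $(0,T)$, and therefore $\Lambda_T f = u^f|_{(0,T)\times\partial M}$. Time integration commutes with restriction to $\partial M$, so
$$
\partial^{-2}_t\Lambda_T f = w|_{(0,T)\times\partial M} = u^{h}|_{(0,T)\times\partial M}.
$$
On the other side, $\Lambda_T h = P_T\Lambda P^*_T h$, and $P^*_T h$ coincides with the chosen extension of $h$ on $(0,T)$. By Remark~\ref{thm:extensionremark} again, $\Lambda_T h = u^h|_{(0,T)\times\partial M}$. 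The two expressions agree, so the identity holds on the dense subspace, and continuity extends it to all of $L^2((0,T)\times\partial M)$.

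The step that needs the most care is the regularity justification: interchanging $\mf{P}$ and $\partial_\nu$ with time integration, and invoking uniqueness. For smooth compactly supported $f$ the solution $u^f$ is smooth up to the boundary by standard compatibility, because $f$ vanishes near $t=0$. Every interchange above therefore happens among smooth functions, and uniqueness of smooth (energy) solutions settles the claim.
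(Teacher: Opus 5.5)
Your proof is correct and is essentially the paper's argument run in the opposite direction. The paper differentiates: it writes $\partial_t^2\Lambda_T h=\Lambda_T f$ via $\partial_t^2 u^{h}=u^{\partial_t^2 h}$, then integrates back using $\Lambda_T h|_{t=0}=\partial_t\Lambda_T h|_{t=0}=0$; you instead integrate $u^f$ twice and identify the result with $u^h$ by uniqueness, and your explicit handling of the extension of $h$ past $t=T$ (where $P_T^*h$ is discontinuous) is slightly more careful than the paper's implicit appeal to Remark~\ref{thm:extensionremark}.
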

\begin{proof}
We will show that the two operators agree on the dense subset of compactly supported smooth functions, then the commutivity follows since both operators are continuous in view of Lemma~\ref{thm:operators}.

For any $f\in C^\infty_c((0,T)\times\partial M)$, set $h=\partial^{-2}_t f$. Let $u^{P^*_T h}$ be the solution of the initial boundary value problem~\eqref{eq:ibvp} with the Neumann data $P^*_T h$, then
\begin{align*}
\partial^2_t \Lambda_T h & = \partial^2_t P_T \Lambda P^*_T h = \partial^2_t \left( u^{P^*_T h}|_{(0.T)\times\partial M} \right) = \left( \partial^2_t u^{P^*_T h} \right)|_{(0.T)\times\partial M} = \left( u^{\partial^2_t  {P^*_T h}} \right)|_{(0.T)\times\partial M} \\
 & = u^f |_{(0.T)\times\partial M} = P_T \Lambda P^*_T f = \Lambda_T f.   
\end{align*}
Applying $\partial^{-1}_t$ twice combined with the initial conditions $\Lambda_T h|_{t=0} = \partial_t \Lambda_T h |_{t=0} = 0$ completes the proof.
\end{proof}

Combining the results in this section, the next theorem constructs $\mf{L}_\lambda$ from $\Lambda$ for any non-Neumann eigenvalue $\lambda\in\mathbb{C}$. Here, we denote by $\mathcal{L}(L^2(\partial M); H^{1}(\partial M))$ the Banach space of the continuous linear operators mapping $L^2(\partial M)$ into $H^{1}(\partial M)$.

\begin{thm} \label{thm:ReconFormula}
    If $T>\tau_{max}$ and $\lambda$ is not a Neumann eigenvalue of $\mf{P}$, then the following formula computes the fixed-frequency elliptic ND map $\mf{L}_\lambda$ in terms of the hyperbolic ND map $\Lambda$:
    \begin{equation} \label{eq:reconformula}
    \begin{aligned}
    \mf{L}_\lambda & = \lim_{\alpha\rightarrow 0+} S \Lambda_T (A_\lambda^*A_\lambda + \alpha I)^{-1} S^* \\
    & = \lim_{\alpha\rightarrow 0+} S \Lambda_T [(1+\bar{\lambda} Z^2) K (1+\lambda \partial^{-2}_t) + S^* S + \alpha I]^{-1} S^*
    \end{aligned}
    \end{equation}
    and the convergence is in the strong topology of $\mathcal{L}(L^2(\partial M); H^{1}(\partial M))$. Here, $K=K(\Lambda)$ is computable from $\Lambda$ as in~\eqref{eq:K}.
\end{thm}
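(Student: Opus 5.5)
The plan is to combine Theorem~\ref{thm:ReconProcedure}(ii) with explicit boundary representations of each operator. Fix $\mf{f}\in L^2(\partial M)$ and set $\ddot f_{\lambda,\alpha}=(A_\lambda^*A_\lambda+\alpha I)^{-1}A_\lambda^*(0,\mf{f})^\top$ and $f_{\lambda,\alpha}=\partial^{-2}_t\ddot f_{\lambda,\alpha}$, with $A_\lambda$ acting into $L^2(M)\times L^2(\partial M)$. Since the first component of the data vanishes, the $L^2$-adjoint gives $A_\lambda^*(0,\mf{f})^\top=S^*\mf{f}$. Hence $\ddot f_{\lambda,\alpha}=(A_\lambda^*A_\lambda+\alpha I)^{-1}S^*\mf{f}$, and this is well defined because $A_\lambda^*A_\lambda+\alpha I$ is boundedly invertible for $\alpha>0$. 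By Theorem~\ref{thm:ReconProcedure}(ii), $\Lambda f_{\lambda,\alpha}(T)\to\mf{L}_\lambda\mf{f}$ in $H^1(\partial M)$. For every fixed $\mf{f}$, this is exactly pointwise (strong operator) convergence in $\mathcal{L}(L^2(\partial M);H^1(\partial M))$. What remains is to show that the operator applied to $\mf{f}$ in \eqref{eq:reconformula} equals $\Lambda f_{\lambda,\alpha}(T)$, and to express $A^*_\lambda A_\lambda$ through $\Lambda$.

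First, I would rewrite $\Lambda f_{\lambda,\alpha}(T)$. The value $u^{f}(T)|_{\partial M}$ depends only on $f$ on $(0,T)$ (Remark~\ref{thm:extensionremark}), so $\Lambda f(T)=\mathcal{T}\Lambda_T f$ for $f=\partial^{-2}_t\ddot f$. Lemma~\ref{thm:commutivity} then gives
\[
\Lambda f_{\lambda,\alpha}(T)=\mathcal{T}\Lambda_T\partial^{-2}_t\ddot f_{\lambda,\alpha}=\mathcal{T}\partial^{-2}_t\Lambda_T\ddot f_{\lambda,\alpha}=S\Lambda_T\ddot f_{\lambda,\alpha}.
\]
Substituting the expression for $\ddot f_{\lambda,\alpha}$ yields the first line of \eqref{eq:reconformula}. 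There is a regularity point here: $f_{\lambda,\alpha}\in H^2_{00}$, so its trace at $t=T$ and the identity $\Lambda_T\partial_t^{-2}=\partial_t^{-2}\Lambda_T$ make sense as $L^2$-level identities. I would justify the first equality in the display by density of smooth compactly supported $\ddot f$ and continuity of $S$, $\Lambda_T$, $\partial_t^{-2}$ (Lemma~\ref{thm:operators}). For the left side, I would use the continuity of $\ddot f\mapsto u^{\partial_t^{-2}\ddot f}(T)|_{\partial M}$. This map goes into $L^2(\partial M)$ via $W\partial^{-2}_t$ followed by the trace, since $W\partial_t^{-2}\ddot f$ lies in $H^1$ by the energy estimate for $H^1_{00}$-type Neumann data in time.

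Second, for the second line I would expand
\[
A_\lambda^*A_\lambda=(1+\lambda\partial_t^{-2})^*W^*c^{-2}W(1+\lambda\partial_t^{-2})+S^*S.
\]
Working in complex $L^2$, the adjoint is $(1+\lambda\partial_t^{-2})^*=1+\bar\lambda Z^2$ by Lemma~\ref{thm:CalculatingAdjoint}. By Lemma~\ref{thm:CalculatingNormal}, $W^*c^{-2}W=K(\Lambda)$. The operator $S^*$ is explicit, $S^*\mf{f}=(T-t)\mf{f}$, which agrees with $J$ on time-independent data (Remark~\ref{thm:SstarJ}). The resulting formula is therefore built entirely from $\Lambda$.

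I expect the main obstacle to be the regularity bookkeeping in the first step: $\Lambda f_{\lambda,\alpha}(T)$ must be identified with $S\Lambda_T\ddot f_{\lambda,\alpha}$ as elements of $H^1(\partial M)$, not merely of $L^2$. I would handle this by proving the identity in $L^2(\partial M)$ by density. Theorem~\ref{thm:ReconProcedure}(ii) already provides the limit in $H^1$, and equality in $L^2$ identifies the two sequences, so the $H^1$-convergence transfers to the formula.
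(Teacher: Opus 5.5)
Your proposal is correct and follows the paper's proof essentially step by step. You invoke Theorem~\ref{thm:ReconProcedure}(ii) with $A_\lambda^*(0,\mf{f})^\top=S^*\mf{f}$, rewrite $\Lambda f_{\lambda,\alpha}(T)=\mathcal{T}\Lambda_T\partial_t^{-2}\ddot f_{\lambda,\alpha}=S\Lambda_T\ddot f_{\lambda,\alpha}$ via Lemma~\ref{thm:commutivity}, and express $A_\lambda^*A_\lambda$ through Lemmas~\ref{thm:CalculatingAdjoint} and~\ref{thm:CalculatingNormal}. You also handle correctly the regularity bookkeeping (making sense of the trace at $t=T$, and transferring the $H^1$ convergence through the $L^2$ identity), which the paper passes over silently.
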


\begin{proof}
Applying Lemma~\ref{thm:CalculatingAdjoint} and Lemma~\ref{thm:CalculatingNormal}, we can rewrite the linear equation~\eqref{eq:TikUsingWS} as
$$
[(1+\bar{\lambda} Z^2) K (1+\lambda \partial^{-2}_t) + S^* S + \alpha I] \ddot{f}_{\lambda,\alpha} = S^* \mf{f}.
$$
From this representation, we solve for $\ddot{f}_{\lambda,\alpha} \in L^2((0,T)\times\partial M)$ and insert $f_{\lambda,\alpha} = \partial^{-2}_t \ddot{f}_{\lambda,\alpha}\, \in L^2((0,T)\times\partial M)$ into Theorem~\ref{thm:ReconProcedure}(ii) to obtain
\begin{align*}
    \Lambda f_{\lambda,\alpha}(T) & = \mathcal{T} P_T \Lambda P^*_T f_{\lambda,\alpha} \\
    & = \mathcal{T} \Lambda_T \partial^{-2}_t \ddot{f}_{\alpha,\lambda} \\
    & = \mathcal{T} \partial^{-2}_t  \Lambda_T \ddot{f}_{\alpha,\lambda} \\
    & = S \Lambda_T \ddot{f}_{\alpha,\lambda} \\
    & = S \Lambda_T (A_\lambda^*A_\lambda + \alpha I)^{-1} S^* \mf{f} \\
    & = S \Lambda_T [(1+\bar{\lambda} Z^2) K (1+\lambda \partial^{-2}_t) + S^* S + \alpha I]^{-1} S^* \mf{f} \\
    & \rightarrow \mf{L}_\lambda \mf{f} \text{ in } H^{1}(\partial M) \text{ as } \alpha\rightarrow 0+
\end{align*}
where the third inequality follows from Lemma~\ref{thm:commutivity}.
\end{proof}

Note that the operators $\partial^{-2}_t, Z, S, S^*$ are explicitly defined on $(0,T)\times\partial M$, which do not require any knowledge inside $M$. As a result, Theorem~\ref{thm:ReconFormula} gives an algorithm to numerically compute the elliptic ND map $\mf{L}_\lambda$ \textit{without} knowing the coefficients $(c,g,q)$ in the operator $\mf{P}$. 
We present a few numerical examples in Section~\ref{sec:numerics} to validate this formula.

\bigskip
\section{Stability Estimate with Fixed Regularization Parameter}
\label{sec:stability}

Although the formula in Theorem~\ref{thm:ReconFormula} holds in the limiting case, one has to fix the regularization parameter $\alpha$ in numerical implementation, resulting in an approximate construction. This fixed value can be chosen according to various discrepancy principles (such as the Morozov discrepency principle), with the goal to balance data fidelity and regularization effect. 
In this section, we analyze the stability of the formula in Theorem~\ref{thm:ReconFormula} with a fixed regularization parameter $\alpha=\alpha_0$. For ease of notation, we will hide the dependence on the constant $\lambda$ and write the operators $\mf{L}_\lambda, A_\lambda$ simply as $\mf{L}, A$. 
The approximate elliptic ND map is
\begin{equation}
    \begin{aligned}
    \mf{L}_{\alpha_0} & := S \Lambda_T (A_\lambda^*A_\lambda + \alpha_0 I)^{-1} S^* \\
    & = S \Lambda_T [(1+\bar{\lambda} Z^2) K (1+\lambda \partial^{-2}_t) + S^* S + \alpha_0 I]^{-1} S^*. 
    \end{aligned}
\end{equation}

\begin{lemma} \label{thm:Kstability}
    The following stability estimate holds:
    \[\|\tilde K - K\|_{L^2((0,T)\times\partial M)\to L^2((0,T)\times\partial M)}\leq \sqrt{2} T \|\tilde\Lambda - \Lambda\|_{L^2((0,2T)\times\partial M)\to L^2((0,2T)\times\partial M)}.\]
\end{lemma}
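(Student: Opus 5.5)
The plan rests on the observation that $K=K(\Lambda)$ in \eqref{eq:K} depends \emph{linearly} on $\Lambda$, with all other ingredients ($J$, $R$, $P_T$, $P^*_T$) fixed and independent of the data. Writing $\tilde K = K(\tilde\Lambda)$ and $\delta\Lambda := \tilde\Lambda - \Lambda$, and using $\tilde\Lambda_T - \Lambda_T = P_T\,\delta\Lambda\,P^*_T$, I would first record the identity
$$
\tilde K - K = J\,\delta\Lambda\,P^*_T - R\,P_T\,\delta\Lambda\,P^*_T\,R\,J\,P^*_T .
$$
The triangle inequality and submultiplicativity of operator norms then reduce the claim to bounding the norms of the fixed operators $J$, $R$, $P_T$, $P^*_T$. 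This is also where the continuity of $R$ and $J$, promised just before \eqref{eq:K}, gets established.

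The easy norms come first. $P_T$ is an orthogonal projection by restriction, $P^*_T$ is extension by zero, and $R$ is a measure-preserving change of variables $t\mapsto T-t$ on $(0,T)$. Hence
$$
\|P_T\| = \|P^*_T\| = \|R\| = 1
$$
(as operators between the relevant $L^2$ spaces).

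The only genuine computation, and the main step, is the bound on $J: L^2((0,2T)\times\partial M)\to L^2((0,T)\times\partial M)$. I would proceed exactly as in the proof of Lemma~\ref{thm:operators}(i). For $0<t<T$, Cauchy--Schwarz on the interval $(t,2T-t)$, whose length is $2(T-t)$, gives
$$
|Jf(t,z)|^2 \le \tfrac14\cdot 2(T-t)\int_0^{2T}|f(\tau,z)|^2\,d\tau .
$$
Integrating over $t\in(0,T)$ and $z\in\partial M$ then yields
$$
\|Jf\|^2_{(0,T)\times\partial M}\le \tfrac12\int_0^T (T-t)\,dt\;\|f\|^2_{(0,2T)\times\partial M} = \tfrac{T^2}{4}\,\|f\|^2_{(0,2T)\times\partial M},
$$
so $\|J\|\le T/2$. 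Cruder alternatives, such as bounding the interval length by $2T$, give $\|J\|\le T/\sqrt2$. Either choice suffices, which is why I expect this step to pose no real obstacle beyond keeping track of the constants.

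Combining these bounds, each of the two terms in $\tilde K-K$ has norm at most $\|J\|\,\|\delta\Lambda\|$, hence
$$
\|\tilde K - K\|\le 2\|J\|\,\|\tilde\Lambda-\Lambda\| \le T\,\|\tilde\Lambda-\Lambda\|\le \sqrt2\,T\,\|\tilde\Lambda-\Lambda\| .
$$
With the cruder bound $\|J\|\le T/\sqrt2$ one lands exactly on the stated constant $\sqrt2\,T$. The only care needed is that $\delta\Lambda$ is treated as an operator on $L^2((0,2T)\times\partial M)$, so that the norm appearing on the right-hand side is the one in the statement.
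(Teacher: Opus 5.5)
Your proposal is correct and follows essentially the same route as the paper: linearity of $K$ in $\Lambda$, unit norms of $P_T$, $P^*_T$, $R$, and a Cauchy--Schwarz bound on $J$, combined via the triangle inequality. The only difference is that you use the actual interval length $2(T-t)$ to get the sharper $\|J\|\le T/2$ (hence the constant $T$), whereas the paper bounds the length by $2T$, obtaining $\|J\|\le T/\sqrt{2}$ and exactly the stated constant $\sqrt{2}\,T$.
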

\begin{proof}
Recall that $K = J \Lambda P^\ast_T - R \Lambda_{T} R J P^\ast_T$ where $\Lambda_T = P_T \Lambda P^*_T$ (see~\eqref{eq:K}). 
It is clear that
$$
\|P^*_T\|_{L^2((0,T)\times\partial M)\rightarrow L^2((0,2T)\times\partial M)} = 1,
$$
$$
\|P_T\|_{L^2((0,2T)\times\partial M)\rightarrow L^2((0,T)\times\partial M)} = 1,
$$
$$
\|R\|_{L^2((0,T)\times\partial M)\rightarrow L^2((0,T)\times\partial M)} = 1.
$$
Hence
\begin{equation}\label{eq:boundLambda_T}
\|\tilde\Lambda_{T} - \Lambda_{T}\|_{L^2((0,T)\times\partial M)\to L^2((0,T)\times\partial M)}\leq\|\tilde\Lambda - \Lambda\|_{L^2((0,2T)\times\partial M)\to L^2((0,2T)\times\partial M)}.
\end{equation}
In order to estimate operator $J$, notice that
    \begin{align*}
        \|Jf\|_{L^2((0,T)\times\partial M)}^2=&\int_0^T\int_{\partial M}\left|\frac{1}{2}\int_t^{2T-t}f(\tau,x)\dif\tau\right|^2 \, d\mu_{\partial M}(x) dt\\
        \leq&\frac{1}{4}\int_0^T\int_{\partial M}\left(\int_t^{2T-t}\left|f(\tau,x)\right|\dif\tau\right)^2 \, d\mu_{\partial M}(x) dt\\
        \leq&\frac{1}{4}\int_0^T\int_{\partial M}\left(\int_0^{2T}\left|f(\tau,x)\right|\dif\tau\right)^2\, d\mu_{\partial M}(x) dt \\
        \leq&\frac{1}{4}\int_0^T\int_{\partial M}2T\int_0^{2T}\left|f(\tau,x)\right|^2 \,d\tau d\mu_{\partial M}(x) dt\\
        =&\frac{T^2}{2}\|f\|_{L^2((0,2T)\times\partial M)}^2,
    \end{align*}
we conclude
    \[\|J\|_{L^2((0,2T)\times\partial M)\rightarrow L^2((0,T)\times\partial M)} \leq \frac{T}{\sqrt{2}},\]
Combining all the estimates above, we obtain:
\begin{align*}
    & \qquad \|\tilde K - K \|_{L^2((0,T)\times\partial M)\to L^2((0,T)\times\partial M)} \\
    & \leq \| J (\tilde\Lambda - \Lambda) P^\ast_T \|_{L^2((0,T)\times\partial M)\to L^2((0,T)\times\partial M)} + \| R (\tilde{\Lambda}_{T} - \Lambda_T) R J P^\ast_T \|_{L^2((0,T)\times\partial M)\to L^2((0,T)\times\partial M)} \\
    & \leq \frac{T}{\sqrt{2}} \|\tilde{\Lambda} - \Lambda\|_{L^2((0,2T)\times\partial M)\to L^2((0,2T)\times\partial M)} + \frac{T}{\sqrt{2}} \|\tilde{\Lambda} - \Lambda\|_{L^2((0,2T)\times\partial M)\to L^2((0,2T)\times\partial M)} \\
    & = \sqrt{2}T \|\tilde{\Lambda} - \Lambda\|_{L^2((0,2T)\times\partial M)\to L^2((0,2T)\times\partial M)}.
\end{align*}

\end{proof}

The following result suggests that reconstructing the approximate elliptic ND map with a fixed regularization parameter $\alpha_0$ is stable with respect to local perturbations of the hyperbolic ND map. 

\begin{thm} \label{thm:stability}
    Let $\Lambda$ be a fixed hyperbolic ND map. If $T>\tau_{max}$ and $\lambda$ is not a Neumann eigenvalue of $\mf{P}$, then there exists a number $\delta=\delta(\Lambda)>0$ such that any hyperbolic ND map $\tilde\Lambda$ with $\|\tilde\Lambda - \Lambda\|_{L^2((0,2T)\times\partial M)\rightarrow L^2((0,2T)\times\partial M)} \leq \delta$ satisfies the following stability estimate
    $$
    \| \tilde{\mf{L}}_{\alpha_0} - \mf{L}_{\alpha_0} \|_{L^2(\partial M)\rightarrow L^2(\partial M)} \leq C \|\tilde\Lambda - \Lambda\|_{L^2((0,2T)\times\partial M)\rightarrow L^2((0,2T)\times\partial M)}
    $$
    for some constant $C>0$ that depends on the norms of $\Lambda$, $(A^*A+\alpha_0 I)^{-1}, S$.
\end{thm}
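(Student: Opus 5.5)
The plan is a standard perturbation argument for the inverse of a bounded, boundedly invertible operator, combined with Lemma~\ref{thm:Kstability}. Write $B := (1+\bar\lambda Z^2) K (1+\lambda\partial^{-2}_t) + S^*S + \alpha_0 I = A^*A+\alpha_0 I$ and $\tilde B$ for the same expression with $K$ replaced by $\tilde K = K(\tilde\Lambda)$. Only $K$ and $\Lambda_T$ depend on the data. The operators $Z^2$, $\partial^{-2}_t$, $S$, $S^*$ are fixed and bounded by Lemma~\ref{thm:operators} and Lemma~\ref{thm:CalculatingAdjoint}; for instance $\|\partial^{-2}_t\|\le T^2/2$, $\|Z^2\|\le T^2/2$, and $\|S^*\|=\|S\|$. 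Hence
$$
\|\tilde B - B\| \le (1+|\lambda|T^2/2)^2 \|\tilde K - K\| \le \sqrt2\, T (1+|\lambda|T^2/2)^2 \|\tilde\Lambda-\Lambda\| =: c_1 \|\tilde\Lambda - \Lambda\|,
$$
with all norms taken on the $L^2$ spaces of the statement.

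Next comes the invertibility of $\tilde B$. Since $B=A^*A+\alpha_0 I$ is self-adjoint with $B\ge \alpha_0$, we have $\|B^{-1}\|\le \alpha_0^{-1}$. The perturbed $\tilde B$ need not be self-adjoint, since $\tilde K$ is an arbitrary perturbation, so I would use a Neumann series. Choose $\delta$ so that $c_1\delta\|B^{-1}\|\le 1/2$. Then $\tilde B = B(I + B^{-1}(\tilde B-B))$ is invertible with $\|\tilde B^{-1}\|\le 2\|B^{-1}\|$. The resolvent identity $\tilde B^{-1}-B^{-1} = -\tilde B^{-1}(\tilde B-B)B^{-1}$ then gives
$$
\|\tilde B^{-1}-B^{-1}\|\le 2\|B^{-1}\|^2 c_1\|\tilde\Lambda-\Lambda\|.
$$
This smallness condition is where $\delta=\delta(\Lambda)$ enters. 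It is the only delicate step, since without it $\tilde B^{-1}$ might not exist. I expect this to be the main obstacle, and it is handled by the Neumann series.

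Finally, decompose
$$
\tilde{\mf{L}}_{\alpha_0}-\mf{L}_{\alpha_0} = S(\tilde\Lambda_T-\Lambda_T)\tilde B^{-1}S^* + S\Lambda_T(\tilde B^{-1}-B^{-1})S^*.
$$
For the first term, use \eqref{eq:boundLambda_T} and $\|\tilde B^{-1}\|\le 2\|B^{-1}\|$. For the second, use $\|\Lambda_T\|\le\|\Lambda\|$ and the resolvent bound above. This yields
$$
\|\tilde{\mf{L}}_{\alpha_0}-\mf{L}_{\alpha_0}\|_{L^2(\partial M)\to L^2(\partial M)} \le \|S\|^2\bigl(2\|B^{-1}\| + 2c_1\|\Lambda\|\|B^{-1}\|^2\bigr)\|\tilde\Lambda-\Lambda\|.
$$
The constant depends only on $\|\Lambda\|$, $\|(A^*A+\alpha_0I)^{-1}\|$, $\|S\|$, and the fixed parameters $T$, $\lambda$, as claimed.

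The norm used is $L^2(\partial M)\to L^2(\partial M)$. Since $S$ maps into $L^2(\partial M)$ continuously by Lemma~\ref{thm:operators}(v), no extra regularity is needed. The hypotheses $T>\tau_{max}$ and that $\lambda$ is non-eigen are used only so that $\mf{L}_{\alpha_0}$ is the meaningful approximation from Theorem~\ref{thm:ReconFormula}; the estimate itself holds for any $\alpha_0>0$.
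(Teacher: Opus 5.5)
Your proof is correct and follows the paper's argument: the same two-term splitting of $\tilde{\mf{L}}_{\alpha_0}-\mf{L}_{\alpha_0}$, the resolvent identity combined with Lemma~\ref{thm:Kstability}, and a smallness condition on $\delta$ to control $\|\tilde B^{-1}\|$. Your version is slightly more careful in two places. You keep the factors $(1+\bar\lambda Z^2)$ and $(1+\lambda\partial^{-2}_t)$ in $\tilde B-B$, which the paper writes simply as $\tilde K-K$. Your Neumann series also proves that $\tilde B$ is invertible, whereas the paper's bootstrap bound on $\|(\tilde A^*\tilde A+\alpha_0 I)^{-1}\|$ presupposes it; this matters when $\tilde\Lambda$ is noisy data rather than a genuine ND map.
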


\begin{proof}
Recall that the operator $S$ is independent of hyperbolic ND maps. The difference between two approximate elliptic ND maps is
    \begin{align*}
        \mf{L}_{\alpha_0} - \tilde{\mf{L}}_{\alpha_0} & = S \Lambda_T (A^*A + \alpha_0 I)^{-1} S^* - S \tilde{\Lambda}_T (\tilde{A}^*\tilde{A} + \alpha_0 I)^{-1} S^* \\
        & = S \left[ \Lambda_T - \tilde{\Lambda}_T \right] (A^*A + \alpha_0 I)^{-1} S^* + S \tilde{\Lambda}_T \left[ (A^*A + \alpha_0 I)^{-1} - (\tilde{A}^*\tilde{A} + \alpha_0 I)^{-1} \right] S^*.
    \end{align*}
In view of~\eqref{eq:boundLambda_T}, the first term is bounded by $C_1 \|\tilde{\Lambda}-\Lambda\|$ for some constant $C_1>0$ that depends on the norms of $(A^*A + \alpha_0 I)^{-1}$ and $S$. For the second term, we use the relation $(\tilde{A}^*\tilde{A} + \alpha_0 I) - (A^*A + \alpha_0 I) = \tilde{K} - K$ to obtain
\begin{align}
    \| (A^*A + \alpha_0 I)^{-1} - (\tilde{A}^*\tilde{A} + \alpha_0 I)^{-1} \| & = \| (A^*A + \alpha_0 I)^{-1} \left[ \tilde{K} - K \right] (\tilde{A}^*\tilde{A} + \alpha_0 I)^{-1} \| \nonumber \\
    & \leq \| (A^*A + \alpha_0 I)^{-1} \| \; \sqrt{2} T \|\tilde\Lambda - \Lambda\| \; \|(\tilde{A}^*\tilde{A} + \alpha_0 I)^{-1} \| \label{eq:diffinversebound}
\end{align}
where the last inequality follows from Lemma~\ref{thm:Kstability}. Therefore, the second term in the difference is bounded by $C_2 \|\tilde{\Lambda}-\Lambda\|$ for some constant $C_2>0$ that depends on the norms of $(A^*A + \alpha_0 I)^{-1}, (\tilde{A}^*\tilde{A} + \alpha_0 I)^{-1}, S, \Lambda, \tilde\Lambda$ and $T$.

To remove the dependence of $C_2$ on $\tilde\Lambda$, we choose $\delta<1$ and use the upper bound
$$
\|\tilde{\Lambda}\| \leq \|\tilde{\Lambda} - \Lambda\| + \|\Lambda\| \leq \delta + \|\Lambda\| \leq 1 + \|\Lambda\|
$$
to replace $\|\tilde{\Lambda}\|$.
To remove the dependence of $C_2$ on $(\tilde{A}^*\tilde{A} + \alpha_0 I)^{-1}$, we apply~\eqref{eq:diffinversebound} to obtain
\begin{align*}
    \| (\tilde{A}^*\tilde{A} + \alpha_0 I)^{-1} \| & \leq \| (\tilde{A}^*\tilde{A} + \alpha_0 I)^{-1} - (A^*A + \alpha_0 I)^{-1} \| + \| (A^*A + \alpha_0 I)^{-1} \| \\
    & \leq \sqrt{2} T \delta \| (A^*A + \alpha_0 I)^{-1} \| \; \|(\tilde{A}^*\tilde{A} + \alpha_0 I)^{-1} \| + \| (A^*A + \alpha_0 I)^{-1} \|.
\end{align*}
Choose $\delta<\frac{1}{\sqrt{2} T \| (A^*A + \alpha_0 I)^{-1} \| }$ to get
$$
\| (\tilde{A}^*\tilde{A} + \alpha_0 I)^{-1} \| \leq \frac{\| (A^*A + \alpha_0 I)^{-1} }{1-\sqrt{2} T \delta \| (A^*A + \alpha_0 I)^{-1} \|}.
$$
This is an upper bound that does not depend on the tilde terms, hence can be used to replace $\| (\tilde{A}^*\tilde{A} + \alpha_0 I)^{-1} \|$ in the constant $C_2$.
\end{proof}

\bigskip
\section{Numerical Validation} \label{sec:numerics}

In this section, we present numerical examples in 1D as a proof of concept to justify the formula~\eqref{eq:reconformula}.
The computational domain is $M = [-1,1]$ with $\partial M = \{-1,1\}$ and $T=4$. We will systematically use $[\cdot]$ to represent the matrix obtained by discretizing the operator inside the square parenthesis. The discretization process is detailed in Appendix A, where the discretized operators $[P_T], [\mathcal{T}], [R], [J], [Z], [\partial^{-1}_t], [\partial^{-2}_t], [S], [\Lambda], [\Lambda_T], [\mf{L}_\lambda]$ are explicitly constructed.

The discretized operators $[K]$ and $[A_\lambda^* A_\lambda]$ are constructed using matrix multiplications, according to~\eqref{eq:K} and~\eqref{eq:TikUsingWS} respectively, as
\begin{align*}
    [K] & = [J] [\Lambda] [P_T]^\top - [R] [\Lambda_T] [R] [J] [P_T]^\top, \\
    [A_\lambda^* A_\lambda] & = (I + \bar{\lambda} [Z]^2) [K] (I+\lambda [\partial^{-2}_t]) + [S]^\top [S]
\end{align*}
where $\cdot^\top$ denotes the matrix transpose.
In the setup of the subsequent Experiment 1 (i.e, $\lambda=0$, $c=1$, $q=\frac{1}{x+2}$ and $g$ is Euclidean) with no noise, $[K], [S^*S]$ and $[A^*_0 A_0]$ are $2002\times 2002$ matrices. Their first 50 singular values are plotted in Fig.~\ref{fig:singularvalues}. Note that $[A^*_0 A_0]$ is severely ill-conditioned with extremely low numerical rank, justifying the necessity to include regularization.

\begin{figure}[!ht]
    \centering
    \includegraphics[width = 0.32\linewidth]{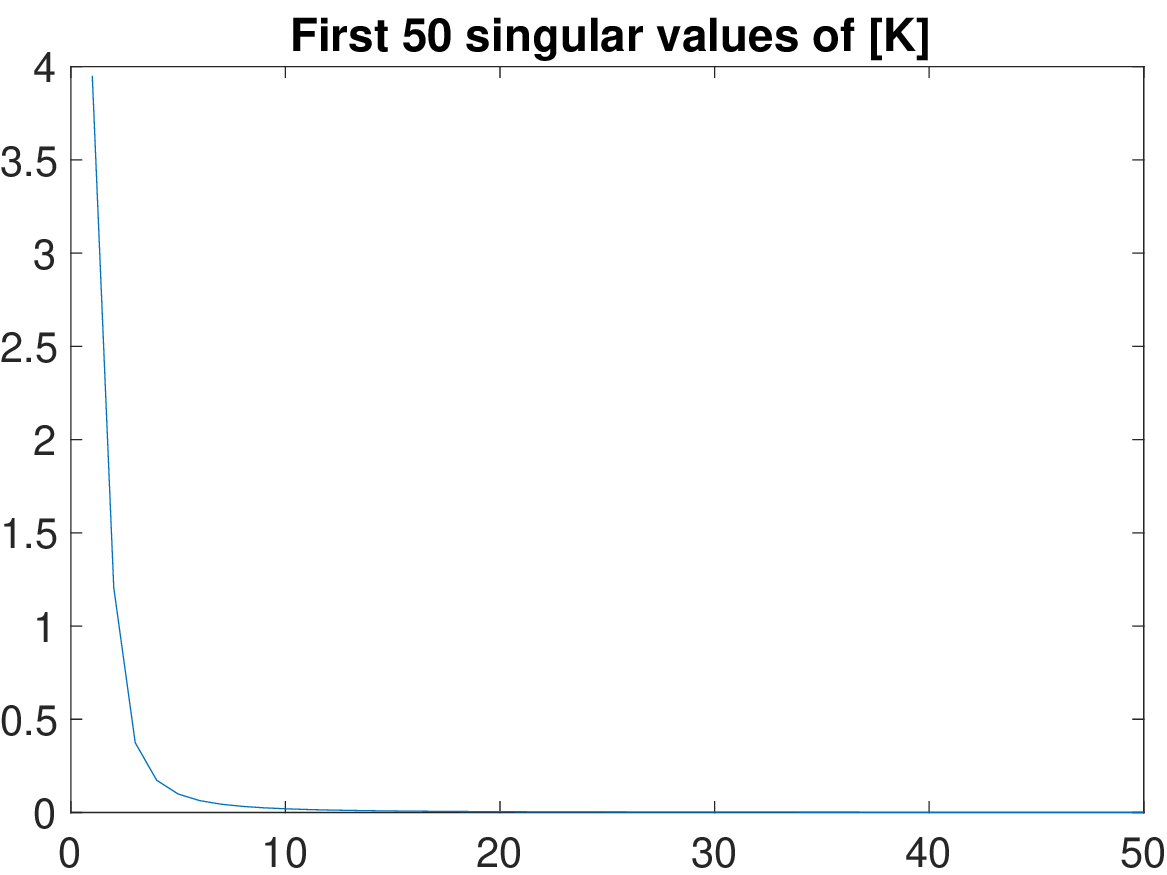}
    \includegraphics[width = 0.32\linewidth]{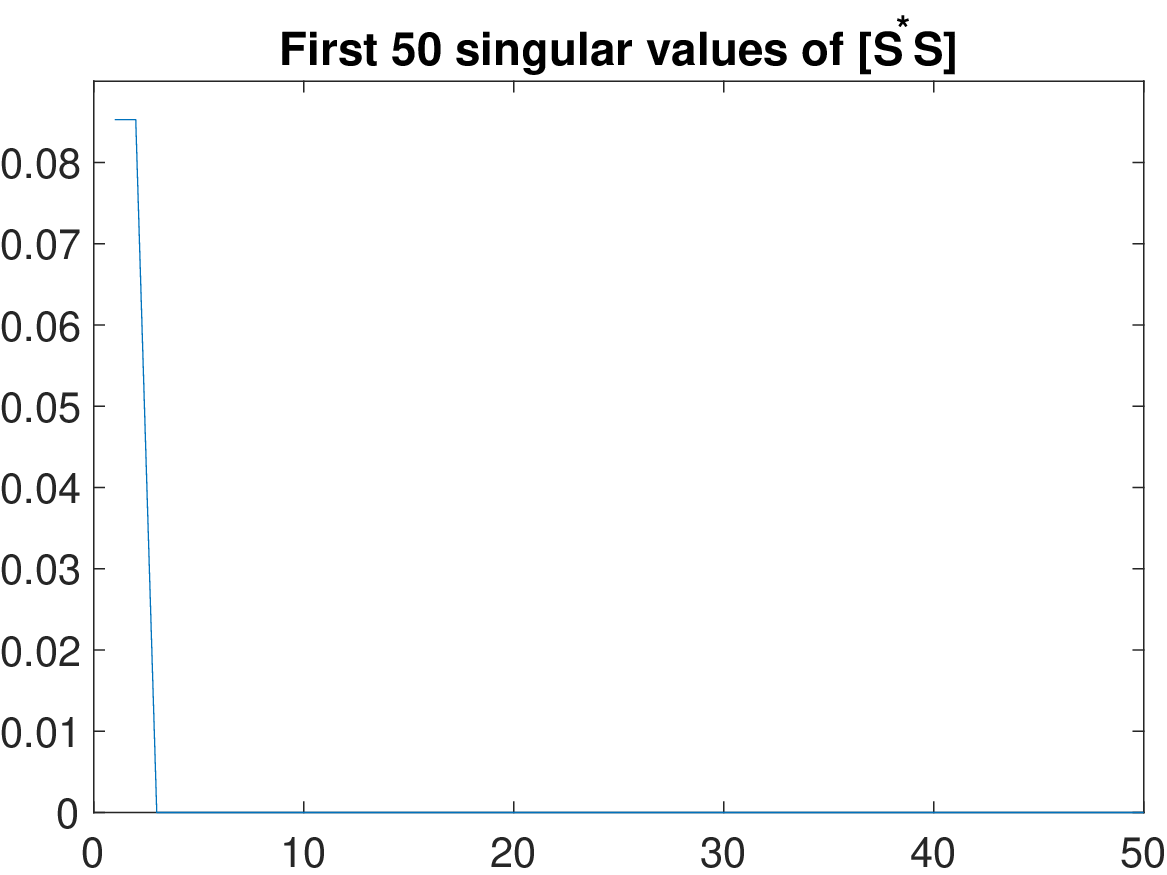}
    \includegraphics[width = 0.32\linewidth]{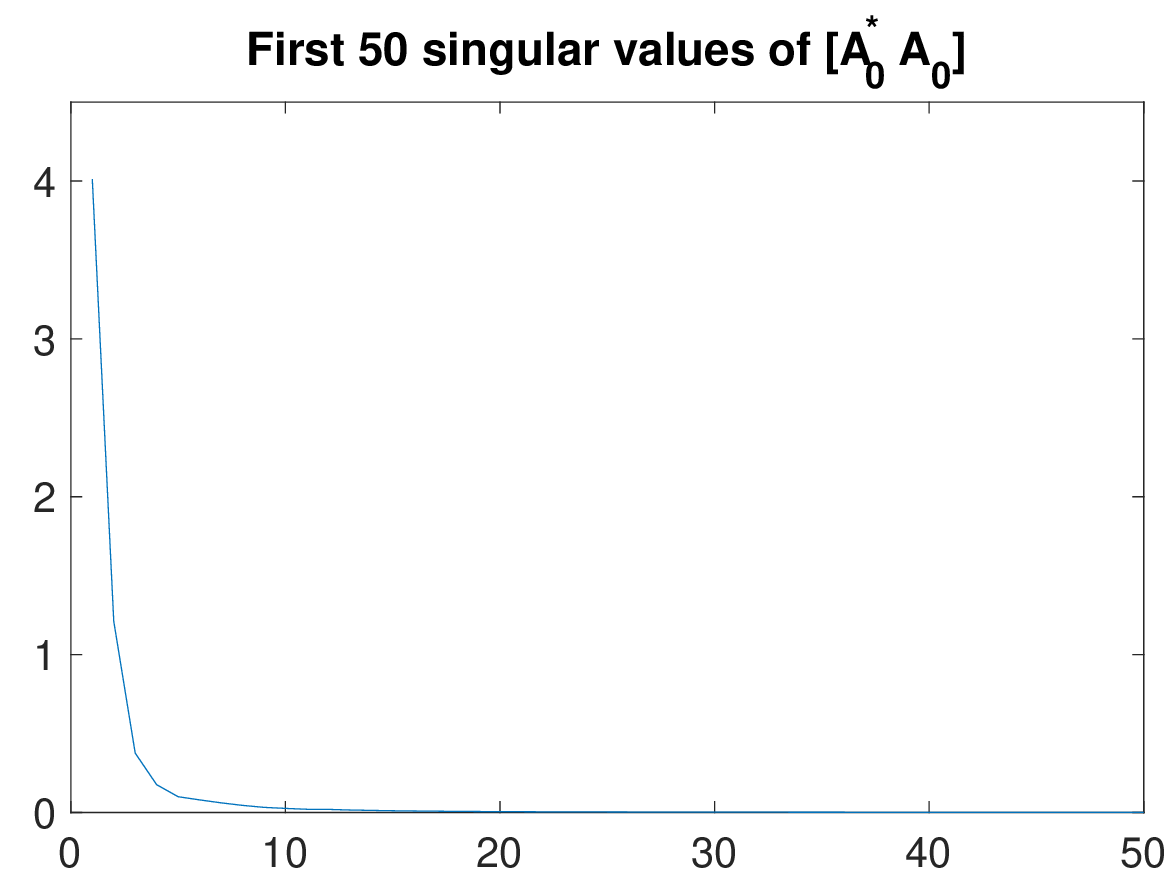}
    \caption{First 50 singular values of $[K], [S^*S]$ and $[A^*_0 A_0]$}
    \label{fig:singularvalues}
\end{figure}

We will validate the stability of the numerical reconstruction by adding random Gaussian noise to each entry of the data $[\Lambda]$. 
We will quantify the error in the reconstructed elliptic ND map $[\tilde{\mf{L}}_\lambda]$ using the \textit{relative Frobenius norm} defined as
$$
\frac{\|[\tilde{\mf{L}}_\lambda] - [\mf{L}_\lambda]\|_F}{\|[\mf{L}_\lambda]\|_F}.
$$
where $\|\cdot\|_F$ denotes the matrix Frobenius norm.
We will quantify the error in the reconstructed elliptic solution $[u^f(T)]$ using the \textit{relative 2-norm} defined as
$$
\frac{\|[u^f(T)] - [\mf{u}_\lambda]\|_2}{\|[\mf{u}_\lambda]\|_2}
$$
where $\|\cdot\|_2$ denotes the vector 2-norm and $[\mf{u}_\lambda]$ is the elliptic solution obtained from an elliptic solver.

\subsection{Experiment 1}
We start with the frequency $\lambda=0$, the constant wave speed $c=1$, the Euclidean metric $g=e$, and the variable potential $q=\frac{1}{x+2}$. 
The regularization parameter is chosen as $\alpha=10^{-4}$.
We impose the elliptic Neumann data $1$ at $x=-1$ and $2$ at $x=2$. The resulting elliptic boundary value problem is
$$
(\Delta-q)\mf{u}_0 = 0\quad\text{ in } (-1,1), \qquad \partial_\nu \mf{u}_0 (-1) = 1, \quad \partial_\nu \mf{u}_0 (1) = 2.
$$
With the choice of $c,g,q$, we have $\tau_{max} = 2 < 4 = T$ and $0$ is clearly not a Neumann eigenvalue of $-\Delta+q$, justifying the assumption in Theorem~\ref{thm:ReconFormula}.

We implement the reconstruction formula~\eqref{eq:reconformula} in the presence of $0\%$ $1\%$ $2\%$ and $5\%$ random Gaussian noise added to $[\Lambda]$. The reconstructions and relative Frobenius error rates are shown in Table~\ref{table:exp1}. Here, the ground-truth elliptic ND map is obtained using the elliptic solver. The experiment validates the reconstruction formula in the Euclidean geometry with and without small random noise.

\begin{table}[!ht]
\scalebox{0.85}{
\begin{NiceTabular}{|c|c|c|c|}
\hline
 & Noise Level & Elliptic ND maps & Relative Frobenius Error Rate \\ \hline
Ground Truth & \diagbox{}{} & $\mf{L}_0 = \begin{bmatrix}
    1.3495 &   0.6534\\
    0.6534 &   1.6640
\end{bmatrix}$ & \diagbox{}{} \\[5pt] \hline 
\multirow{7}{*}{Reconstruction} & $0\%$ & $\tilde{\mf{L}}_0 = \begin{bmatrix}
    1.3371 &   0.6434\\
    0.6436 &   1.6501
\end{bmatrix}$ & $1.00\%$ \\[5pt] \cline{2-4} 
 & $1\%$ & $\tilde{\mf{L}}_0 = \begin{bmatrix}
    1.3526 &   0.6413\\
    0.6494 &   1.6569
\end{bmatrix}$ & $1.95\%$ \\[5pt] \cline{2-4} 
 & $2\%$ & $\tilde{\mf{L}}_0 = \begin{bmatrix}
    1.3412 &   0.6689\\
    0.6545 &   1.7001
\end{bmatrix}$ & $4.31\%$ \\[5pt] \cline{2-4} 
 & $5\%$ & $\tilde{\mf{L}}_0 = \begin{bmatrix}
    1.2264 &   0.5146\\
    0.5993 &   1.5881
\end{bmatrix}$ & $11.4\%$ \\[5pt] \hline
\end{NiceTabular}
}
\caption{Reconstructed ND map in the Euclidean geometry with $0\%,1\%,2\%,5\%$ random Gaussian noise and the corresponding relative Frobenius error rates.}
\label{table:exp1}
\end{table}

We also obtain the boundary control $f$ (which is $f_{\lambda,\alpha}$ in Theorem~\ref{thm:ReconProcedure}) and construct the wave snapshots $u^f(T)$ in the presence of $0\%$ $1\%$ $2\%$ and $5\%$ random Gaussian noise, see Fig.~\ref{fig:exp1_uT}. Here, the ground-truth elliptic solution is obtained using the elliptic solver.
\begin{figure}[!ht]
    \centering
    \includegraphics[width = 0.49\linewidth]{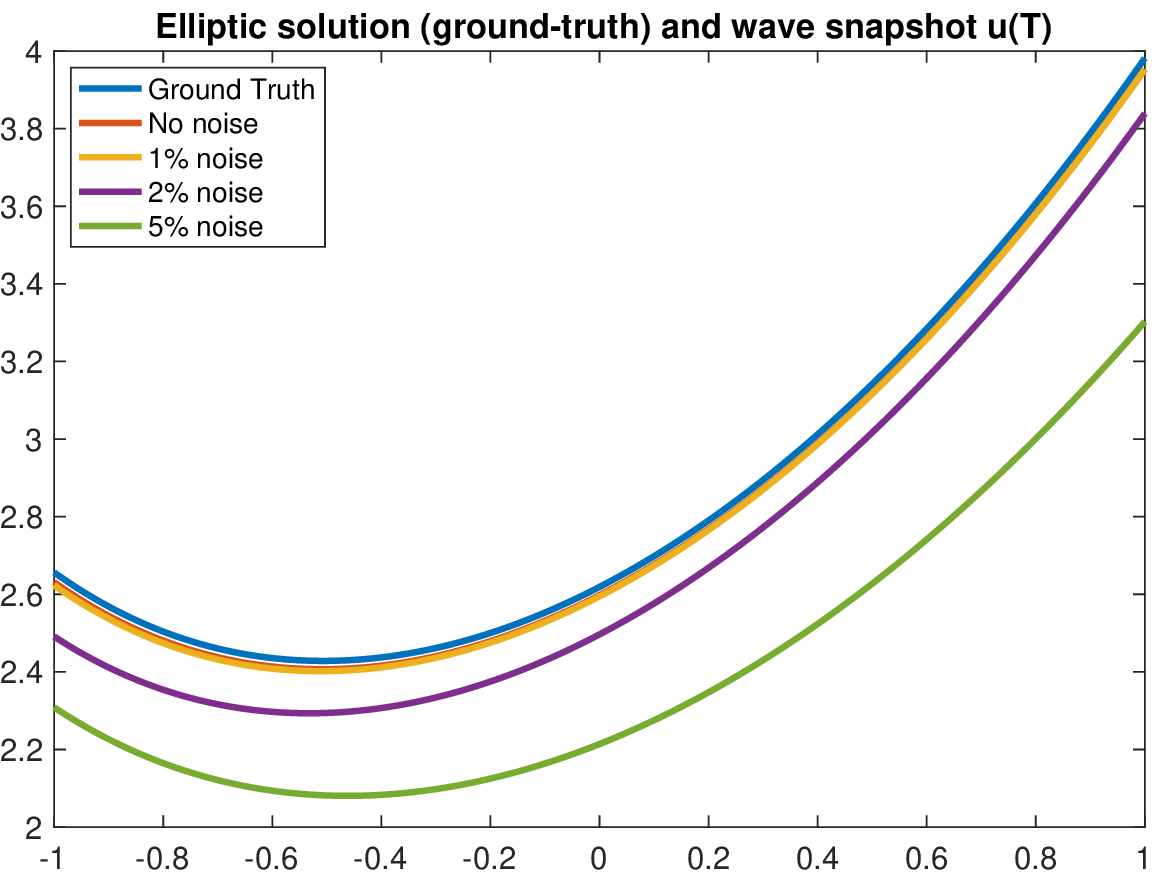}
    \includegraphics[width = 0.49\linewidth]{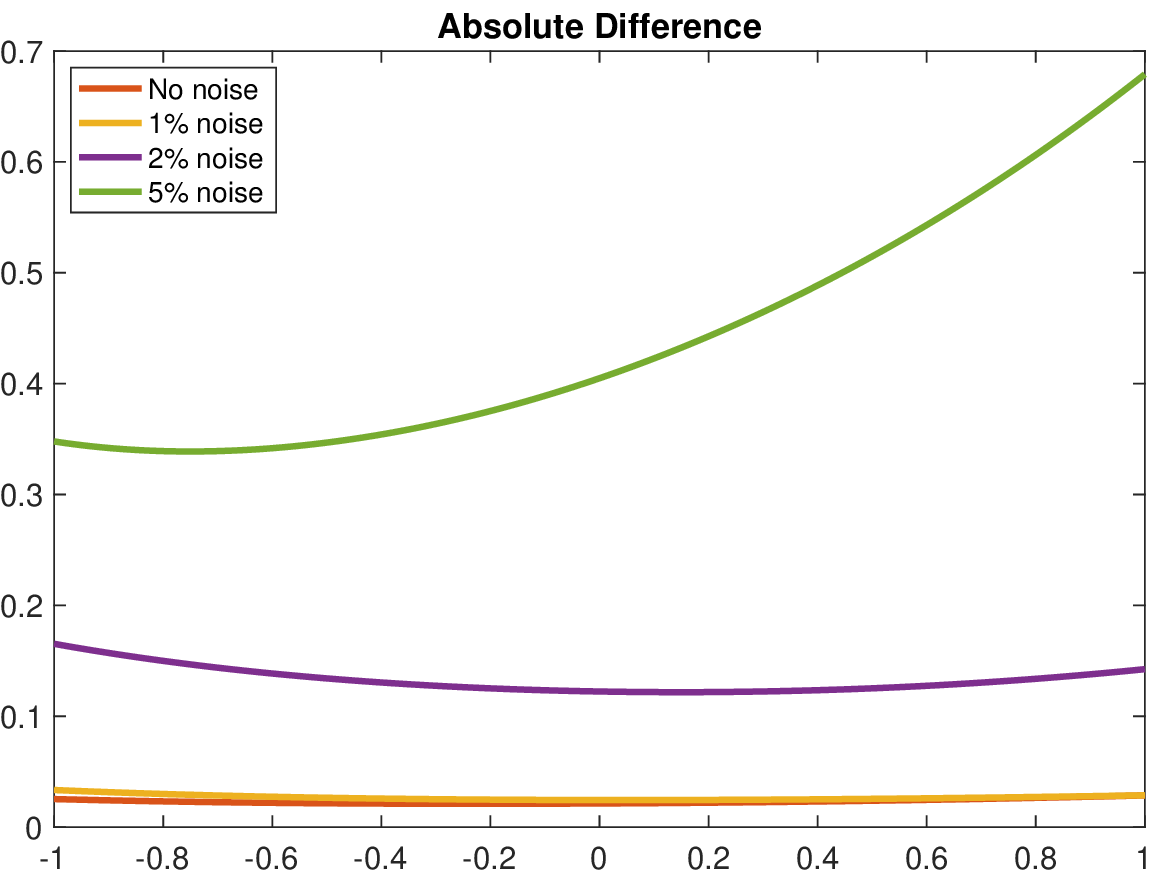}    
    \caption{Left: Elliptic solution $\mf{u}_0$ versus wave snapshot $u^f(T)$ in the Euclidean geometry with $0\%$ $1\%$ $2\%$ and $5\%$ random Gaussian noise. Right: Absolute difference $|[\mf{u}_0] - [u^f(T)]|$.}
    \label{fig:exp1_uT}
\end{figure}

\subsection{Experiment 2}
We continue with the setup as in Experiment 1 with $\lambda=0$, $c=1$, Euclidean metric $e$, $q=\frac{1}{x+2}$, and the same elliptic Neumann data.
This time, no noise is added to $[\Lambda]$, but we gradually decrease the regularization parameter by taking $\alpha = 10^{-1}, 10^{-2}, \dots, 10^{-10}$.
We plot the relative Frobenius error for the reconstructed elliptic ND map $[\tilde{\mf{L}}_0]$ and the relative 2-norm error for the reconstructed elliptic solution $[u^f(T)]$ in Fig.~\ref{fig:changing_alpha}. The experiment validates that the reconstruction formula~\eqref{eq:reconformula} approximates the ground-truth elliptic ND map $[\mf{L}_0]$ as $\alpha\rightarrow 0+$.
\begin{figure}[!ht]
    \centering
    \includegraphics[width = 0.49\linewidth]{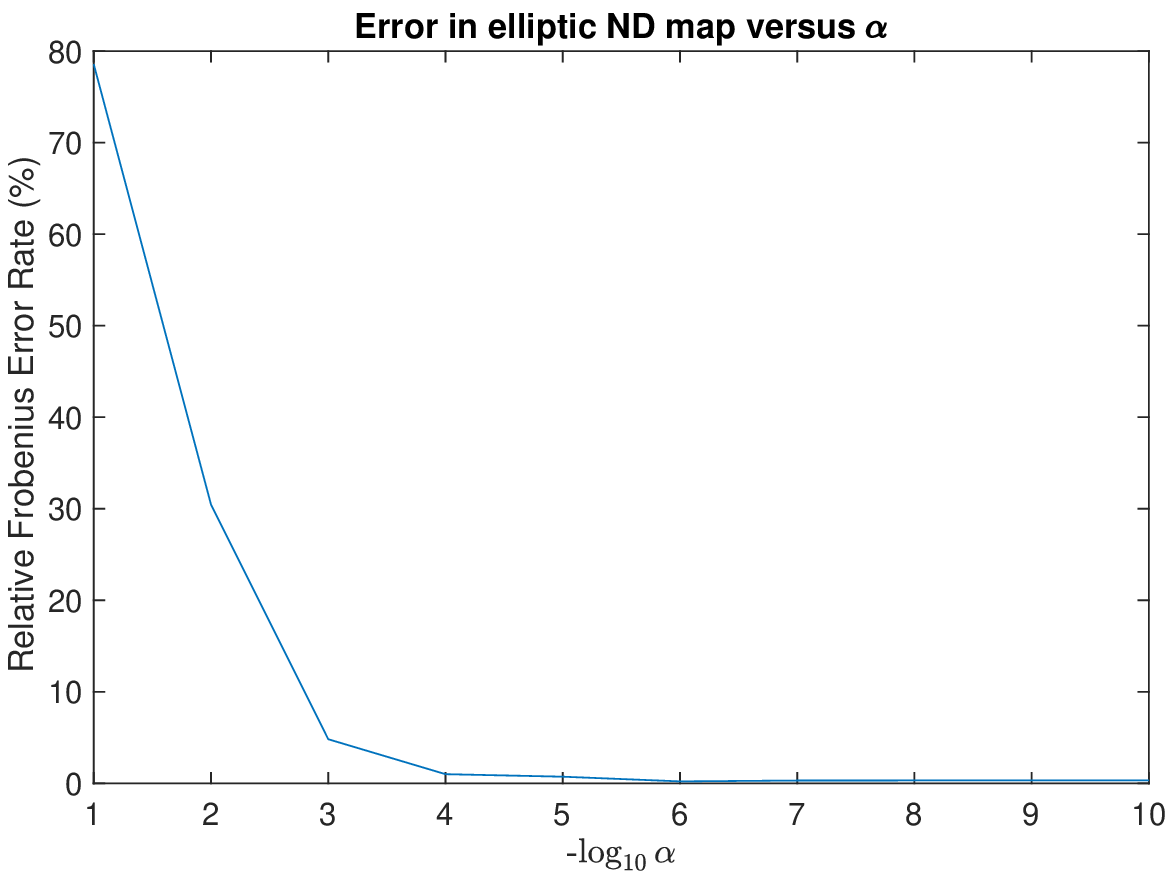}
    \includegraphics[width = 0.49\linewidth]{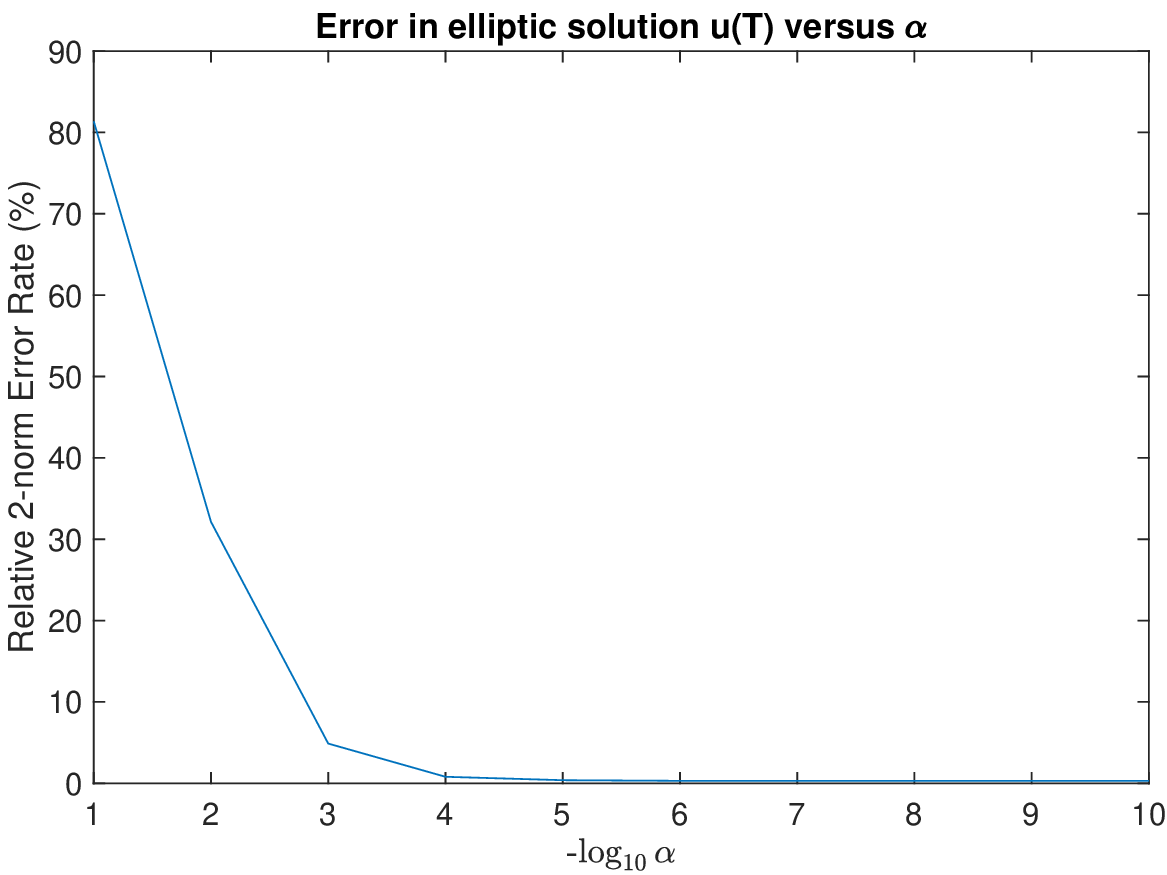}    
    \caption{Left: Relative Frobenius error of the reconstructed elliptic ND map $[\tilde{\mf{L}}_0]$ versus $\alpha = 10^{-1}, 10^{-2}, \dots, 10^{-10}$. Right: Relative 2-norm error of the reconstructed elliptic solution $[\mf{u}_0]$ versus $\alpha = 10^{-1}, 10^{-2}, \dots, 10^{-10}$.}
    \label{fig:changing_alpha}
\end{figure}

\subsection{Experiment 3}
We continue with the frequency $\lambda=0$, the Euclidean metric $g=e$, the variable potential $q=\frac{1}{x+2}$, the regularization parameter $\alpha=10^{-4}$, the same elliptic Neumann data, but take a variable wave speed $c=\cos(\frac{x+1}{2})>0$.
The resulting elliptic boundary value problem is
$$
(c^2 \Delta-q)\mf{u}_0 = 0\quad\text{ in } (-1,1), \qquad \partial_\nu \mf{u}_0 (-1) = 1, \quad \partial_\nu \mf{u}_0 (1) = 2.
$$
In this case, the travel time metric $g':= c^{-2} e$ is conformal to the Euclidean metric $e$, and the elliptic equation can be written as
$$
(c^2 \Delta-q)\mf{u}_0 = \Delta_{g'} \mf{u}_0 - \langle \nabla_{g'} (\ln c), \nabla_{g'}(\mf{u}_0) \rangle_{g'} - q \mf{u}_0 = 0.
$$
where $\nabla_{g'}$ is the gradient operator with respect to the travel time metric $g'$. This is an elliptic equation on $M=[-1,1]$ equipped with the non-Euclidean metric $g'$.
With the choice of $c,g,q$, it is easy to verify that $0$ is not a Neumann eigenvalue of $-c^2 \Delta+q$. Moreover, 
$$
\tau_{max} = \int^1_{-1} \frac{dx}{c(x)} = \int^1_{-1} \sec\left(\frac{x+1}{2}\right) \,dx = 2\ln (\sec 1 + \tan 1) \approx 2.45 < 4 = T,
$$
which justifies the assumption in Theorem~\ref{thm:ReconFormula}.

Again, we implement the reconstruction formula~\eqref{eq:reconformula} in the presence of $0\%$ $1\%$ $2\%$ and $5\%$ random Gaussian noise added to $[\Lambda]$. The reconstructions and relative Frobenius error rates are shown in Table~\ref{table:exp3}. 
We also construct the wave snapshots $u^f(T)$ in the presence of $0\%$ $1\%$ $2\%$ and $5\%$ random Gaussian noise, see Fig.~\ref{fig:exp3_uT}. Here, the ground-truth elliptic ND map and the ground-truth elliptic solution are obtained using the elliptic solver. The experiment validates the reconstruction formula in a non-Euclidean geometry with and without small random noise.

\begin{table}[!ht]
\scalebox{0.85}{
\begin{NiceTabular}{|c|c|c|c|}
\hline
 & Noise Level & Elliptic ND maps & Relative Frobenius Error Rate \\ \hline
Ground Truth & \diagbox{}{} & $\mf{L}_0 = \begin{bmatrix}
    1.2005 &   0.3985\\
    0.3985 &   1.1645
\end{bmatrix}$ & \diagbox{}{} \\[5pt] \hline 
\multirow{7}{*}{Reconstruction} & $0\%$ & $\tilde{\mf{L}}_0 = \begin{bmatrix}
    1.1904 &   0.3913\\
    0.3908 &   1.1528
\end{bmatrix}$ & $1.06\%$ \\[5pt] \cline{2-4} 
 & $1\%$ & $\tilde{\mf{L}}_0 = \begin{bmatrix}
    1.2084 &   0.4070\\
    0.3842 &   1.1480
\end{bmatrix}$ & $1.83\%$ \\[5pt] \cline{2-4} 
 & $2\%$ & $\tilde{\mf{L}}_0 = \begin{bmatrix}
    1.2573 &   0.4147\\
    0.4303 &   1.1759
\end{bmatrix}$ & $4.69\%$ \\[5pt] \cline{2-4} 
 & $5\%$ & $\tilde{\mf{L}}_0 = \begin{bmatrix}
    1.0698 &   0.3433\\
    0.2972 &   1.1224
\end{bmatrix}$ & $11.45\%$ \\[5pt] \hline
\end{NiceTabular}
}
\caption{Reconstructed ND map in a non-Euclidean geometry with $0\%,1\%,2\%,5\%$ random Gaussian noise and the corresponding relative Frobenius error rates.}
\label{table:exp3}
\end{table}

\begin{figure}[!ht]
    \centering
    \includegraphics[width = 0.49\linewidth]{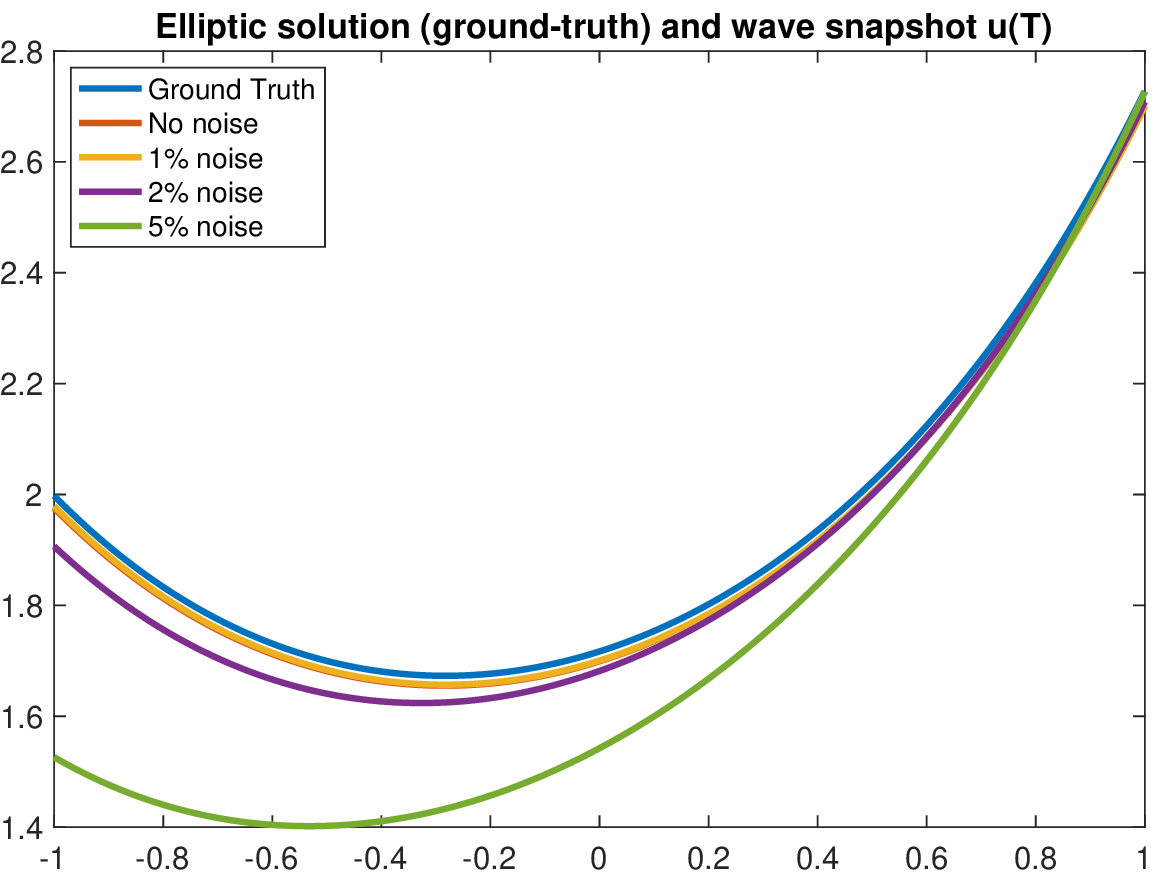}
    \includegraphics[width = 0.49\linewidth]{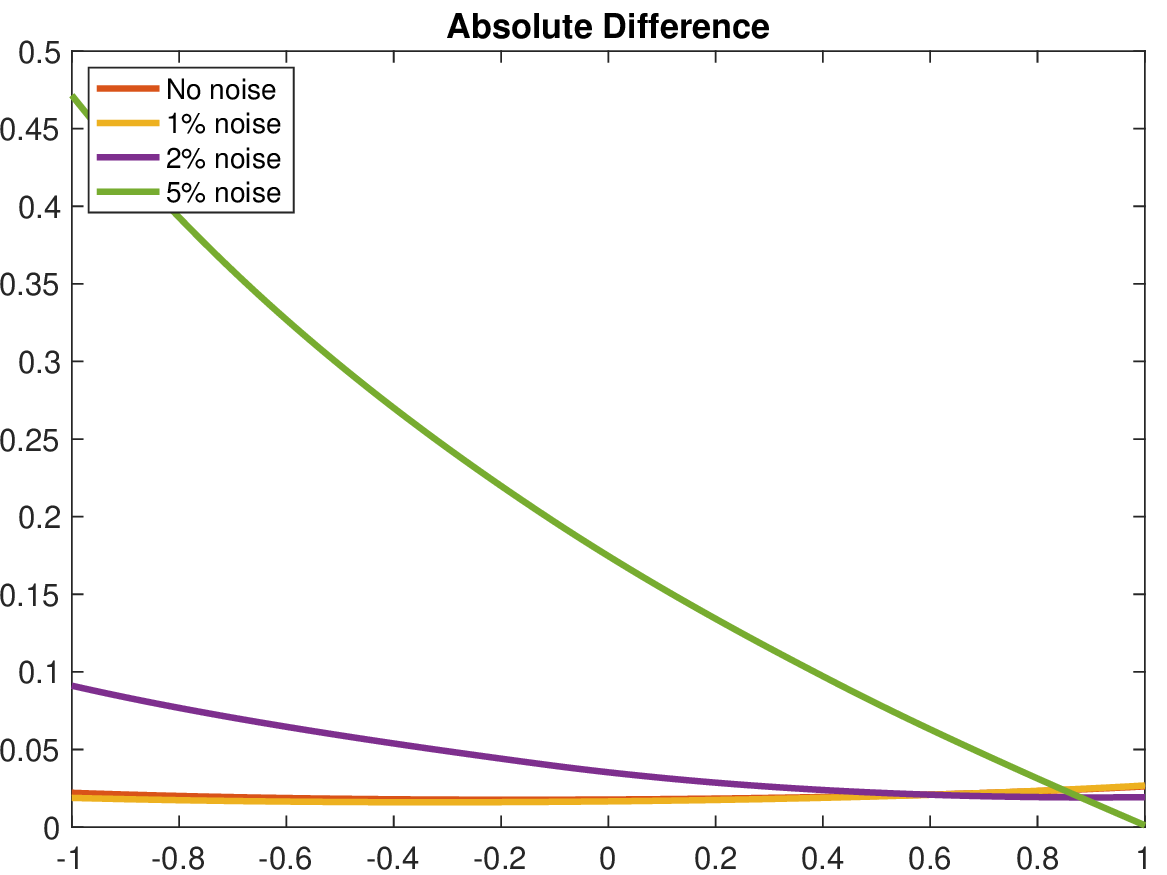}    
    \caption{Left: Elliptic solution $\mf{u}$ versus wave snapshot $u^f(T)$ in a non-Euclidean geometry with $0\%$ $1\%$ $2\%$ and $5\%$ random Gaussian noise. Right: Absolute error $|[\mf{u}_0] - [u^f(T)]|$.}
    \label{fig:exp3_uT}
\end{figure}

\subsection{Experiment 4}
We return to the Euclidean geometry and study the impact of the frequency $\lambda$ to the reconstruction of the elliptic solution $\mf{u}$. Take the constant wave speed $c=1$, the Euclidean metric $g=e$, the constant potential $q=\pi$, and the regularization constant $\alpha=10^{-6}$. Then $\tau_{max} = 2 < 4 = T$, and the resulting Neumann eigenvalues of the operator $-\Delta+q$ are
$$
\lambda_j = \left( \frac{j \pi}{2} \right)^2 + \pi, \qquad j=0,1,2,\dots. 
$$

First, we sweep $\lambda$ across the real interval $(-8,8)$ with step size $0.1$. The relative 2-norm error of the snapshot $u^f(T)$ versus $\lambda$ is shown in Fig.~\ref{fig:exp4_error}. Note that the only Neumann eigenvalues in $(-8,8)$ are
$\lambda_0 = \pi \approx 3.1416$ and $\lambda_1 = \frac{\pi^2}{4} + \pi \approx 5.609$, which are marked by red crosses. It is clear that the reconstruction formula~\eqref{eq:reconformula} remains valid unless the frequency $\lambda$ is close to an eigen-frequency.

Next, we sweep $\lambda$ across the purely imaginary interval $(-8i,8i)$ with step size $0.1i$, where $i$ denotes the imaginary unit. The relative 2-norm error of the snapshot $u^f(T)$ versus $\lambda$ is shown in Fig.~\ref{fig:exp4_error}. Note that there is no Neumann eigenvlaue in $(-8i,8i)$, hence the reconstruction formula~\eqref{eq:reconformula} remains valid. The experiment validates the necessity of the assumption that $\lambda$ is not a Neumann eigenvalue of $-\Delta+q$ in order for the reconstruction formula to be valid.

\begin{figure}[!ht]
    \centering
    \includegraphics[width = 0.49\textwidth]{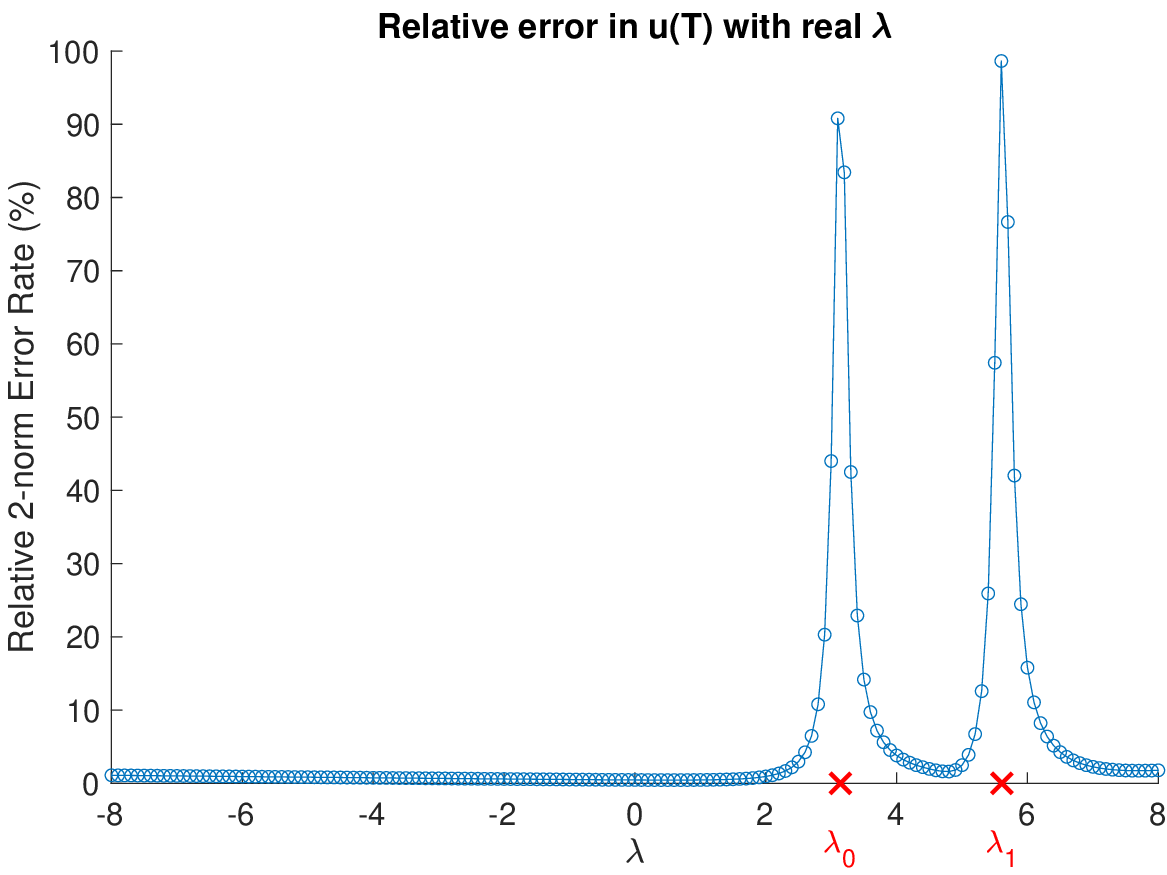}  
    \includegraphics[width = 0.49\textwidth]{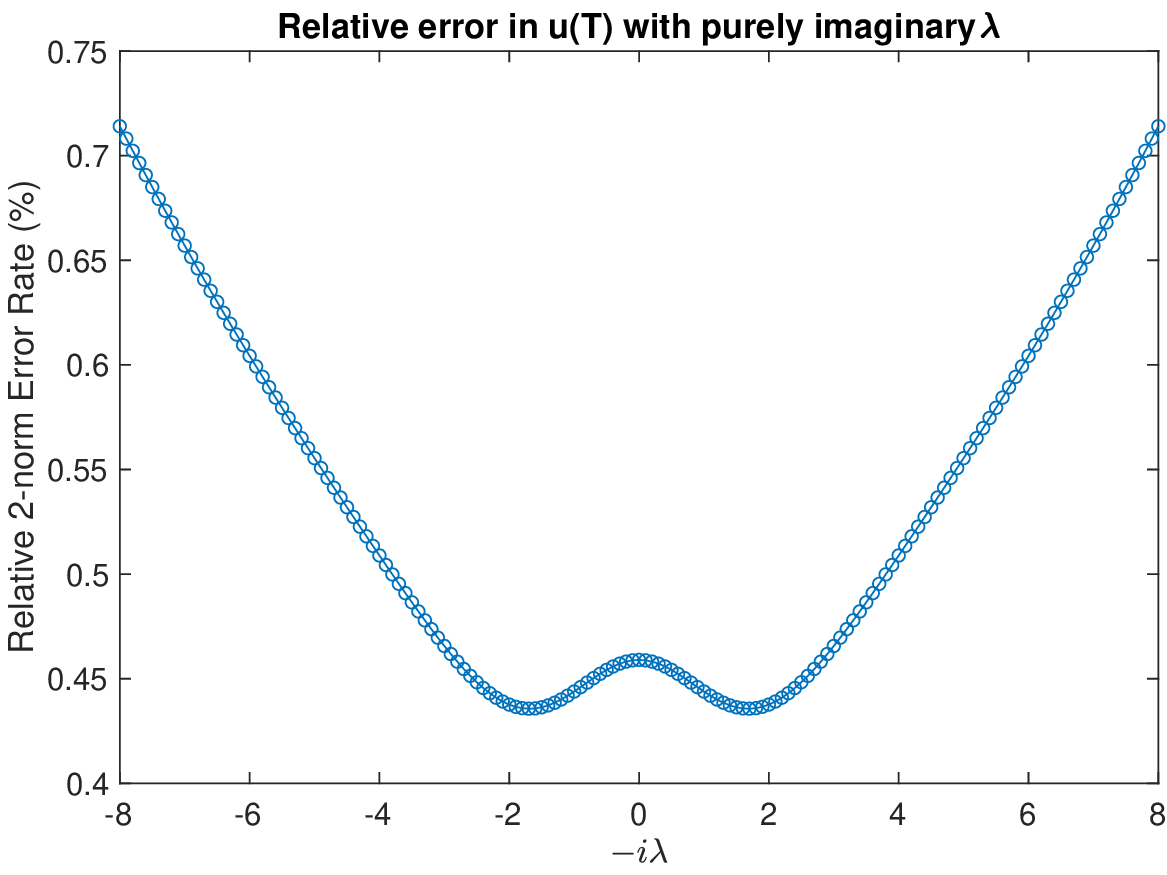}%
    \caption{Left: Error in the wave snapshot $u^f(T)$ (relative to the elliptic solution $\mf{u}$) for real $\lambda\in (-8,8)$. Right: Error in the wave snapshot $u^f(T)$ for purely imaginary $\lambda\in (-8i,8i)$.}
    \label{fig:exp4_error}
\end{figure}

\section*{Acknowledgment}
The research is partially supported by the NSF grants DMS-2237534 and DMS-2220373. The author thanks Dr. Tianyu Yang for the help with some of the numerical experiments.

\bibliographystyle{abbrv}
\bibliography{ref}

\appendix

\section{Details of Discretization}

This appendix details the discretization process for the numerical implementation.

The computational domain is $M = [-1,1]$ with $\partial M = \{-1,1\}$ and $T=4$.
We sample $N_x = 401$ grid points evenly on $M$ with spatial spacing $\Delta x = \frac{1}{200}$. The coordinates of these spatial grid points are 
$$
x_j = -1 + (j-1) \Delta x, \qquad j=1,\dots,N_x.
$$
The temporal spacing is chosen as $\Delta t = \frac{4\Delta x}{5\|c\|_\infty}$ to fulfill the CFL condition for solving the wave equation. Here, $\|c\|_\infty$ denotes the maximum of the wave speed $c$ on $M$. The temporal grid points are 
$$
t_{j} = (j-1) \Delta t, \qquad j=1,\dots,N_t
$$
with $t_{N_t}=T$. Therefore, there are $N_t$ temporal grid points on $[0,T]$, and $2N_t$ temporal grid points on $[0,2T+2\Delta t]$.
Here, we solve the wave equation all the way to $2T+2\Delta t$ instead of $2T$, but the extra data play no essential role in the subsequent experiments, except that some matrices are slightly larger than needed.

In the 1D case, a function on $[0,T]\times\partial M$ consist of two parts. 
Such a function is sampled on the temporal grid points to form a $2N_t\times1$ column vector, where the top half contains the values on $[0,T]\times\{-1\}$ and the bottom half contains the values on $[0,T]\times\{1\}$. Similarly, a function on $[0,2T+2\Delta t]\times\partial M$ is sampled to form a $4N_t\times1$ column vector.

The operators involved are discretized as follows: The discretized projection operator $[P_T]$ is the $2N_t\times4N_t$ matrix:
\[[P_T] = \begin{bmatrix}
    I_{N_t\times N_t}&O_{N_t\times N_t}&&\\
    &&I_{N_t\times N_t}&O_{N_t\times N_t}
\end{bmatrix}\]
where $I_{N_t\times N_t}$ denotes the $N_t\times N_t$ identity matrix and $O_{N_t\times N_t}$ denotes the $N_t\times N_t$ zero matrix. The discretization of the adjoint operator is simply the matrix transpose, i.e., $[P_T^*] = [P_T]^\top$.
The discretized restriction operator $[\mathcal{T}]$ is the $2\times2N_t$ matrix:
\[[\mathcal{T}] = \begin{bmatrix}
    O_{1\times (N_t-1)} & 1 & &\\
    & & O_{1\times (N_t-1)} & 1
\end{bmatrix}\]
The discretized time reversal operator $[R]$ is the $2N_t\times2N_t$ matrix:
\[[R] = \begin{bmatrix}
    &&1&&&\\
    &\iddots&&&&\\
    1&&&&&\\
    &&&&&1\\
    &&&&\iddots&\\
    &&&1&&
\end{bmatrix},\]
which consists of two $N_t\times N_t$ skew-diagonal matrices.

All the integral operators involved are discretized using the trapezoidal rule. For instance, the discretized integral operator $[\partial_t^{-1}]$ is the block-diagonal $2N_t\times2N_t$ matrix
\[[\partial_t^{-1}]=\frac{\Delta t}{2}
\begin{bmatrix}
    \begin{matrix}
    0&&&&&&\\
    1&1&&&&&\\
    1&2&1&&&&\\
    1&2&2&1&&\\
    \vdots&\vdots&\vdots&\vdots&\ddots&\\
    1&2&\dots&\dots&2&1\\
\end{matrix}&\\
&\begin{matrix}
    0&&&&&&\\
    1&1&&&&&\\
    1&2&1&&&&\\
    1&2&2&1&&\\
    \vdots&\vdots&\vdots&\vdots&\ddots&\\
    1&2&\dots&\dots&2&1\\
\end{matrix}
\end{bmatrix},\]
which consists of two identical $N_t\times N_t$ blocks. The discretized second-order integral operator is taken as the square of the first-order matrix, i.e., $[\partial_t^{-2}] = [\partial_t^{-1}]^2$. 
Similarly, the discretized integral operator $[Z]$ is the block-diagonal $2N_t\times2N_t$ matrix with two $N_t\times N_t$ identical diagonal blocks:
\[[Z] = \frac{\Delta t}{2}\begin{bmatrix}
    \begin{matrix}
    1&2&2&\dots&2&1\\
    &1&2&\dots&2&1\\
    &&\ddots&\ddots&\vdots&\vdots\\
    &&&1&2&1\\
    &&&&1&1\\
    &&&&&0\\    
\end{matrix}&\\
&\begin{matrix}
    1&2&2&\dots&2&1\\
    &1&2&\dots&2&1\\
    &&\ddots&\ddots&\vdots&\vdots\\
    &&&1&2&1\\
    &&&&1&1\\
    &&&&&0\\    
\end{matrix}
\end{bmatrix}.\]
The discretized low-pass filter $[J]$ is the $2N_t\times4N_t$ matrix
\[[J] = \frac{\Delta t}{2}\begin{bmatrix}
    \begin{matrix}
    1&2&2&\dots&\dots&2&2&1\\
    &1&2&\dots&\dots&2&1&\\
    &&\ddots&\ddots&\iddots&\iddots&&\\
    &&&1&1&&&
\end{matrix}&\\
&\begin{matrix}
    1&2&2&\dots&\dots&2&2&1\\
    &1&2&\dots&\dots&2&1&\\
    &&\ddots&\ddots&\iddots&\iddots&&\\
    &&&1&1&&&
\end{matrix}
\end{bmatrix}\]
which consists of two identical $N_t\times2N_t$ blocks. The discretized operator $S$ is taken as $[S]=[\mathcal{T}][\partial^{-2}_t]$.

The discretized elliptic ND map $[\mf{L}_\lambda]$ is a $2\times 2$ matrix. This matrix is obtained using a second-order finite-difference elliptic solver: we set $1$ as the left Neumann condition and $0$ as the right Neumann condition to numerically compute the Diricilet data, which forms the first column of $[\mf{L}]$. We then set $0$ as the left Neumann condition and $1$ as the right Neumann condition to obtain the second column of $[\mf{L}]$.

The discretized wave ND map $[\Lambda]$ is a $4N_t\times4N_t$ matrix. This matrix is obtained using a second-order finite difference time domain solver. More precisely, we set $1$ as the $j$-th entry of $[f]$ and $0$ as the other entries (recall that $[f]$ is a $4N_t\times1$ column vector) to numerically find the wave solution $[u^f]$ in $(0,2T+2\Delta t)\times M$. The resulting Dirichlet data $[u^f|_{(0,2T+2\Delta t)\times\Omega}]$ is the $i$-th column of $[\Lambda]$.
Note that if we shift the boundary condition from $f(t,x)$ to $h(t,x) := f(t-t_0,x)$, then the wave solution becomes $u^h(t,x) = u^f(t-t_0,x)$. Using this invariance with respect to temporal shifts, we only need to calculate columns of $[\Lambda]$ for the first few time steps by solving the wave equation. The other columns can be obtained by time-shifting. The truncated wave ND map is calculated by $[\Lambda_T] = [P_T][\Lambda][P_T]^\top$.

\end{document}